\documentclass[12pt, reqno]{amsart}

\usepackage{amsmath, amsthm, amscd, amsfonts, amssymb, graphicx, color, bm}
\usepackage[bookmarksnumbered, colorlinks, plainpages]{hyperref}
\usepackage[scr=boondox]{mathalfa}
\usepackage{graphicx}
\usepackage{color}
\usepackage{CJK}
\usepackage{setspace}
\usepackage{verbatim}
\newtheorem{theorem}{Theorem}[section]
\newtheorem{lemma}[theorem]{Lemma}

\theoremstyle{definition}
\newtheorem{definition}[theorem]{Definition}

\theoremstyle{remark}

\numberwithin{equation}{section}
\makeatletter
\renewcommand{\subsection}{%
  \@startsection{subsection}{2}{\z@}%
    {-3.25ex\@plus -1ex \@minus -.2ex}%
    {1.5ex \@plus .2ex}{\normalfont\itshape}}
\makeatother    

\usepackage[
backend=biber,
style=numeric,
giveninits=true,
url=false,
doi=false,
isbn=false,
maxbibnames=99,
]{biblatex}
\begin{document}

\title{Affine dual Minkowski problem for general measures}
\author[C. Zhang, H. Jin]{ C\lowercase{heng} Z\lowercase{hang}$^{\ast}$, 
H\lowercase{ailin} J\lowercase{in}\\ \\
  D\lowercase{epartment of}
M\lowercase{athematics}\\ S\lowercase{uzhou} U\lowercase{niversity
of} S\lowercase{cience and} T\lowercase{echnology,}\\
S\lowercase{uzhou,} 215009 C\lowercase{hina}}

\begin{abstract}
    In 2025, the affine dual curvature measures $\widetilde{C}_m^a(K,\cdot)$ of convex body $K$ were introduced by Cai, Leng, Wu, and Xi, and the even Minkowski problem for the affine dual curvature measures was solved.  In this paper, the necessary and sufficient condition of affine dual Minkowski problem for general measures with $m>1$ are proposed.

\noindent\textbf{Keywords}: Affine convex geometry; Dual affine quermassintegral; Minkowski problem; Affine dual curvature measure. 
\end{abstract}
\thanks{MSC (2020): 52A20, 52A30, 52A40}
\thanks{$^*$ Corresponding author.}
\thanks{\textbf{E-mail:} {\emph{jinhailin@usts.edu.cn; z\underline{  }c31415926@163.com}}}
\keywords{}
\maketitle

\section{Introduction}
The \textit{classical Minkowski peoblem} asks: what conditons must a given measure on $S^{n-1}$ satisfy in order for there to exist a convex body whose surface area measure is the given measure? And, if the convex body exists, to what extent is it unique? These are the existence and uniqueness questions for the Minkowski problem. There are also regularity questions as well as stability questions for the Minkowski problem. This is an old problem with a long history filled with venerable names such as Minkowski\cite{zbMATH02673675,zbMATH02657630}, Aleksandrov\cite{zbMATH00930190}, Fenchel\&Jessen\cite{zbMATH02516933}, Nirenberg\cite{MR58265}, Caffarelli\cite{MR1038360}, Pogorelov\cite{zbMATH03486652}, and Yau\cite{zbMATH03565730}. See Schneider \cite{MR3155183} for a discussion. The Minkowski problem for the mixed area measures is called the \textit{Christoffel-Minkowski problem}. The Christoffel-Minkowski problem is still largely open. See Guan-Ma \cite{MR1961338} for a regular case. More result and progress relate Minkowski problem can refer \cite{MR2237290,MR4252759,MR4700389,Lutwak1993,MR4764744,MR3155183,MR2729006,MR3228445,MR3352764}.


During the last three decades, the study of new geometric measures greatly influenced the now vibrant Brunn-Minkowski theory. These new geometric measures include cone-voulume meausre\cite{MR3037788}, the $L_p$ surface area measure\cite{Lutwak1993}, and quite recently the dual curvature measures\cite{MR3573332}.


The dual Brunn-Minkowski theory was introduced by Lutwak in the mid-1970s. Many of fundamental geometric invariants (including mixed volumes and the quermassintegrals), and related formulas and geometric inequalities in the classical Brunn-Minkowski theory, had counterparts in the dual theory. See e.g.\cite{Boeroeczky2019,MR3680945,MR2254308,MR3882970,MR3725875,MR3810248,Sheng2019,MR1254193,MR3605843}. It is remarkable that Lutwak's work\cite{MR963487} in the 1980s demonstrated a relation between the dual Brunn-Minkowski theory and the long-open Busemann-Petty problem. The \textit{dual quermassintegral} $\widetilde{W}_{n-m}(K)$ of the body $K\subset\mathbb{R}^n$ is defined to be the mean value of the $m$-dimensional volumes of the central sections:
\begin{equation}\label{eq:1.1}
    \widetilde{W}_{n-m}(K)=\frac{\omega_n}{\omega_m}\int_{G(n,m)}\text{vol}_m(K\cap\xi)d\nu_m(\xi),\quad m=1,\cdots,n,
\end{equation}
where $G(n,m)$ denotes the Grassmannian of $m$ dimensional subspaces of $\mathbb{R}^n$, while $\nu_m$ denotes the Haar measure (the unique rotationally invariant probability measure) on $G(n,m)$, while $\text{vol}_k$ denotes $k$ dimensional volume, and $\omega_m$ denotes the volume of the unit ball in $\mathbb{R}^m$. 
\subsection{\mdseries\itshape{Geometric measures and variation formulas}}\hfill

For convenience, let $\mathcal{K}^n$ denote the set of convex bodies (compact convex sets), $\mathcal{K}_o^n$ and $\mathcal{K}_{(o)}^n$ denote the set of convex bodies containing the origin and the set of convex bodies containing the origin in its relative interiors respectively. 
It is natural to consider the differential of a functional on $\mathcal{K}^n$(or $\mathcal{K}_o^n$) to study its minimizers (or maximizer). Moreover, for a functional on $\mathcal{K}^n$, the study of its variation formulas is closely related to isoperimetric questions regarding the functional. Of central importance is the study of Minkowski-type problems by using variational methods. For example, the area measure $S_{n-1}(K,\cdot)$ of a convex body $K$ can be viewed as the differential of the volume $V(K)$ evaluated at the convex body $K$. Lutwak's $L_p$ surface area measures \cite{Lutwak1993} can be obtained by differentiating volume via $L_p$-combinations of bodies. 


\textbf{Dual curvature measures and their variation formulas.} In 2016, Huang-Lutwak-Yang-Zhang established the variation formula for dual quermassintegral

\begin{equation*}
    \frac{d}{dt}\Big|_{t=0}\widetilde{W}_{n-m}([K,f]_t)=\int_{S^{n-1}}f(v)d\widetilde{C}_m(K,v),
\end{equation*}
where for sufficiently small $\vert t\vert$, $f$ is the continuous function on $S^{n-1}$, the family of \textit{logarithmic Wulff shapes} $[K,f]_t$ defined by
\begin{equation*}
    [K,f]_t=\{x\in\mathbb{R}^n:\log(x\cdot v)_+\leq\log h_{K}(v)+tf(v),\enspace \text{for all}\enspace v\in S^{n-1}\}.
\end{equation*}
Here $s_+=\max\{s,0\}$ denotes the positive part of $s\in\mathbb{R}$, $h_{K}$ is the support function of the $K$. $\widetilde{C}_m(K,\cdot)$ is the \textit{dual curvature measure}, given by 
\begin{equation*}
    \widetilde{C}_m(K,\eta)=\frac{1}{n}\int_{\boldsymbol{\alpha}_{K}^*(\eta)}\rho_{K}(u)^mdu,
\end{equation*}
for Borel set $\eta\subset S^{n-1}$. Here $\rho_K(u)=\max\{\lambda>0:\lambda u\in K\}$, for all $u\in S^{n-1}$, is the \textit{radial function} of $K$, and $\boldsymbol{\alpha}_{K}^*(\eta)$ denotes the \textit{reverse radial Gauss image} of $K$, which is the set of all $u\in S^{n-1}$, such that the point $\rho_K(u)u\in\partial K$ has an outer unit normal to $\partial K$ that belongs to the set $\eta$.

Note that, when $m=n$ the dual curvature measure becomes the cone-volume measure, and when $m=0$ it truns out to be Aleksandrov's integral curvature of $K$'s \textit{polar body}
\begin{equation*}
    K^*=\{x\in\mathbb{R}^n:x\cdot y\leq1\enspace \text{for all}\enspace y\in K \};
\end{equation*}
see e.g.\cite{MR3851743}. The new approach in \cite{MR3851743} inspired work on fundamental problems in convex geometry from a larger perspective; see e.g., \cite{MR3882970,MR4040624,MR4156606,MR4252759}.

\subsection{\mdseries\itshape{Dual affine quermassintegral}}\hfill



 \textit{Dual affine quermassintegrals} were proposed by Lutwak, see \cite{MR3155183}. It was defined by letting
 $\widetilde{\Phi}_0(K)=V(K)$, and $\widetilde{\Phi}_n(K)=\omega_n$, while for $m=1,\cdots,n-1,$
 %
 \begin{equation*}
     \widetilde{\Phi}_{n-m}(K):=\frac{\omega_n}{\omega_m}\Bigg(\int_{G(n,m)}\text{vol}_m(K\cap\xi)^nd\nu_m(\xi)\Bigg)^{1/n}.
 \end{equation*}
 The normalization is to preserve the geometric meaning of these quantities.
 It was shown by Grinberg \cite{MR1125008} that $\widetilde{\Phi}_{n-m}$ is, as its names suggests, $\text{SL}(n)$ invariant. Isoperimetric inequalities for this affine invariants are stronger than their classical counterparts. When $m=n-1$, the affine isoperimetric inequalities for $\widetilde{\Phi}_1(k)$ are known as Busemann intersection inequality. For generic $m$, the affine isoperimetric inequality for $\widetilde{\Phi}_{n-m}(K)$\cite{MR1125008}, is
\begin{equation*}
    \widetilde{\Phi}_{n-m}\leq\omega_n^{n-m}V(K)^m,
\end{equation*}
where equality is achieved if and only if $K$ is an ellipsoid centered at the origin.

 The $n-$th moment of sections, $\widetilde{\Psi}_m$, defined for $K\in\mathcal{K}_o^n$, when $0<m<n$, by
 \begin{equation}\label{equation: 1.3}
       \widetilde{\Psi}_m(K)=\int_{G(n,m)}\text{vol}(K\cap\xi)^nd\nu_m(\xi).
 \end{equation}
 For completeness, set $\widetilde{\Psi}_0(K)=1$ and $\widetilde{\Psi}_n(K)=V(K)^n$. Obiviously, $\widetilde{\Psi}_m(K)=\widetilde{c}_{n,m}\widetilde{\Phi}_{n-m}(K)^n$, where
 \begin{equation}\label{eq:1.5}
     \widetilde{c}_{n,m}=\Big(\frac{\omega_m}{\omega_n}\Big)^n.
 \end{equation}

 Thus the $\widetilde{\Psi}_m$ is $\text{SL}(n)$ invariant.

 \subsection{\mdseries\itshape{Constructions of the affine geometric measures}}\hfill

\textbf{The affine dual curvature measure.} Suppose $m\in\{1,\cdots,n-1\}$ and $K\in\mathcal{K}^n$ contains the origin in its interior. The \textit{affine dual curvature measure} of $K$ is defined by
 \begin{equation*}
     \widetilde{C}_{m}^a(K,\eta)=\int_{\bm{\alpha}_K^*(\eta)}\rho_K(u)^m\mathcal{R}_m^*(\text{vol}_m(K\cap\cdot)^{n-1})(u)du,
 \end{equation*}
 for each Borel set $\eta\subset S^{n-1}$, where $\text{vol}_m(K\cap\cdot)$ means the continuous function $\xi\mapsto\text{vol}_m(K\cap\xi)^{n-1}$ in $C(G(n,m))$ (see (\ref{equation: 2.1}) for the precise definition of the dual Radon transform $\mathcal{R}_m^*$). The affine dual curvature measures of bodies that are $\text{SL}(n)-$images of each other,  are themselves $\text{SL}(n)-$images
 of each other. More precisely, for each $\varphi\in\text{SL}(n),$
 \begin{equation*}
     \widetilde{C}_m^a(\varphi K,\cdot)=\varphi^{-1}\widetilde{C}_m^a(K,\cdot).
 \end{equation*}

 Cai et al.\cite{MR4874854} established the variational formula of $\widetilde{\Phi}_{n-m}(K)$.
\begin{theorem}\cite{MR4874854}
    For a convex body $K\subset\mathbb{R}^n$ that contains the origin in its interior, and for each $f\in C(S^{n-1})$,
    \begin{equation*}
        \frac{d}{dt}\Big|_{t=0}\Big[\widetilde{\Phi}_{n-m}([K,f]_t)^n\Big]=n\big(\frac{\omega_n}{\omega_m}\big)^n\int_{S^{n-1}}f(v)d\widetilde{C}_m^a(K,v),
    \end{equation*}
    where for sufficiently small $[t]$, the logarithmic Wulff shapes $[K,f]_t$ is given by
    \begin{equation*}
        [K,f]_t=\{x\in\mathbb{R}^n:\log(x\cdot v)_+\leq\log h_K(v)+tf(v)\},\quad \text{for all } v\in S^{n-1}\}.
    \end{equation*}
\end{theorem}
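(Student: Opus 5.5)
The plan is to write $\widetilde{\Phi}_{n-m}(K)^n$ as an integral over the sphere involving radial functions, and then differentiate under the integral sign, using the Huang--Lutwak--Yang--Zhang variational lemma for radial functions of logarithmic Wulff shapes.

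\textbf{Step 1: polar coordinates on each section.} For $\xi\in G(n,m)$ we have
\[
\text{vol}_m(K\cap\xi)=\frac1m\int_{S^{n-1}\cap\xi}\rho_K(u)^m\,du .
\]
Hence
\[
\widetilde{\Phi}_{n-m}(K)^n=\Big(\frac{\omega_n}{\omega_m}\Big)^n\int_{G(n,m)}\Big(\frac1m\int_{S^{n-1}\cap\xi}\rho_K^m\Big)^n d\nu_m(\xi).
\]

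\textbf{Step 2: differentiate the radial function.} Set $K_t=[K,f]_t$. The HLYZ lemma (from the paper establishing the variation formula for dual quermassintegrals) gives the following for almost every $u$ with respect to spherical Lebesgue measure, namely off the non-smooth directions of $\bm{\alpha}_K$:
\[
\frac{d}{dt}\Big|_{t=0}\log\rho_{K_t}(u)=f(\alpha_K(u)).
\]
Moreover $|\log\rho_{K_t}-\log\rho_K|\le M|t|$ uniformly, with $M=\max|f|$. The set of such directions also has $\mathcal H^{m-1}$-measure zero on $S^{n-1}\cap\xi$ for $\nu_m$-a.e.\ $\xi$; this follows from Fubini on the flag manifold.

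\textbf{Step 3: dominated convergence.} Dominated convergence on each section gives
\[
\frac{d}{dt}\Big|_{t=0}\text{vol}_m(K_t\cap\xi)=\int_{S^{n-1}\cap\xi}f(\alpha_K(u))\rho_K^m(u)\,du .
\]
The chain rule then gives the derivative of the $n$-th power as
\[
n\,\text{vol}_m(K\cap\xi)^{n-1}\int_{S^{n-1}\cap\xi}f(\alpha_K)\rho_K^m .
\]
The uniform bounds $\rho_{K_t}\le e^{M|t|}\rho_K$ let us differentiate under $\int_{G(n,m)}$ as well.

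\textbf{Step 4: change of integration order.} We apply the Fubini-type identity relating the Radon transform and the dual Radon transform $\mathcal R_m^*$ (the definition the paper refers to as (\ref{equation: 2.1})):
\[
\int_{G(n,m)}g(\xi)\int_{S^{n-1}\cap\xi}F\,du\,d\nu_m(\xi)=c\int_{S^{n-1}}F(u)\,\mathcal R_m^*g(u)\,du .
\]
Here $g=\text{vol}_m(K\cap\cdot)^{n-1}$ and $F=f\circ\alpha_K\,\rho_K^m$. The normalizations of $\mathcal R_m^*$ and of the spherical measures should absorb $c$ so that the constant $n(\omega_n/\omega_m)^n$ emerges.

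\textbf{Step 5: pass to the measure.} Finally,
\[
\int_{S^{n-1}}f(\alpha_K(u))\rho_K^m\,\mathcal R_m^*(\cdots)(u)\,du=\int_{S^{n-1}}f\,d\widetilde C_m^a(K,\cdot)
\]
by the definition of $\widetilde C_m^a$ via $\bm\alpha_K^*$, which is the standard push-forward identity.

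The main obstacle is the justification in Steps 2--3. Differentiability of $\rho_{K_t}$ holds only almost everywhere on the full sphere, and this must be transferred to almost every point of almost every $m$-dimensional great subsphere. I would handle this by writing the double integral as a single integral over pairs $(u,\xi)$ with $u\in\xi$, whose $u$-marginal is a multiple of spherical measure. Dominated convergence on that product space, using the bound $|(\rho_{K_t}^m-\rho_K^m)/t|\le C$, then settles both interchanges at once. Checking the constant in Step 4 is routine bookkeeping.
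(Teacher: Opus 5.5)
The paper gives no proof of its own here. It quotes the statement from Cai--Leng--Wu--Xi and later uses it in the equivalent form Lemma~\ref{lemma: 7.2}, via $\widetilde{\Psi}_m=(\omega_m/\omega_n)^n\widetilde{\Phi}_{n-m}^n$.

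Your argument is a correct proof along the standard lines: polar coordinates on each section, the Huang--Lutwak--Yang--Zhang lemma for $\log\rho_{[K,f]_t}$ with its uniform $M|t|$ bound, dominated convergence using that the $u$-marginal of the flag measure is a multiple of spherical measure, and the $\mathcal{R}_m$--$\mathcal{R}_m^*$ duality.

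The constant you deferred is exactly $c=1$ for the normalization $\frac{m\omega_m}{n\omega_n}$ in (\ref{equation: 2.1}). Testing with $F\equiv g\equiv 1$ gives $m\omega_m$ on both sides, and this is the same identity that yields Lemma~\ref{lemma: 7.3}. So $n(\omega_n/\omega_m)^n$ comes out with nothing further to absorb.

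One small point: the $n$-th power means the flag-space step is not literally a single integral. To make it one, factor $V_t^n-V_0^n=(V_t-V_0)\sum_{j=0}^{n-1}V_t^jV_0^{n-1-j}$ with $V_t(\xi)=\text{vol}_m([K,f]_t\cap\xi)$. The sum converges to $nV_0^{n-1}$ uniformly in $\xi$.
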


\subsection{\mdseries\itshape{Minkowski problems for geometric measures}}\hfill

\textbf{Affine dual Minkowski problem.} Suppose $m\in\{1,\cdots,n-1\}$ is fixed. Find necessary and sufficient conditions that a finite Borel $\mu$ on $S^{n-1}$ must satisfy for there to exist a convex body $K\in\mathcal{K}_o^n$ such that
\begin{equation*}
    \mu=\widetilde{C}_m^a(K,\cdot).
\end{equation*}

When $m=1$, $\widetilde{C}_m^a(K,\cdot)$ become the \textit{cone-volume measure} $V_K$, also denoted by $V(K,\cdot)$, defined for Borel sets $\eta\subset S^{n-1}$ by
\begin{equation}
    V_K(\eta)=\frac{1}{n}\int_{x\in\nu_K^{-1}(\eta)}x\cdot\nu_K(x)d\mathcal{H}^{n-1}(x)=V(K\cap c(\eta)),
\end{equation}
which is the volume of the cone $K\cap c(\eta)$, where $c(\eta)$ is the cone of rays eminating from the origin such that $\partial K\cap c(\eta)=\nu^{-1}(\eta)$ for the Borel set $\eta\subset S^{n-1}$. In fact, when $m=1$, and  $K$ is original symmetric, $\widetilde{C}_m^a(K,\cdot)$ is proportion to $n$-th dual curvature measure $\widetilde{C}_n(K,\cdot)$, and $\widetilde{C}_n(K,\cdot)$ is the cone-volume measure \cite{MR3573332}.
 The Minkowski problem for cone-volume measure is remains open except the even case.

 A finite Borel measure $\mu$ on $S^{n-1}$ is said to satisfy the \textit{strict subspace concentration inequality} if
 \begin{equation}\label{eq:1.3}
     \frac{\mu(S^{n-1}\cap\xi)}{|\mu|}<\frac{\text{dim}\xi}{n},\quad \text{for each proper subspace}\hspace{0.5em}\xi.
 \end{equation}
 Cai et al.\cite{MR4874854} solved the affine dual Minkowski porblem for even measures. They obtained the following theorem.
\begin{theorem}\cite{MR4874854}
    Suppose $m\in\{1,\cdots,n-1\}$. If $\mu$ is a finite even Borel measure on $S^{n-1}$ with positive mass, and $\mu$ satisfies the stirct subspace concentration inequality(\ref{eq:1.3}), then there exists an origin-symmetric convex body $K$ such that $\widetilde{C}_m^a(K,\cdot)=\mu.$
\end{theorem}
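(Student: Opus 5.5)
The plan follows the variational approach of Huang--Lutwak--Yang--Zhang, adapted to the affine functional $\widetilde{\Psi}_m$.

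\textbf{Step 1: the optimization problem.} Normalize so that $|\mu|=1$. Over origin-symmetric bodies $Q\in\mathcal{K}_o^n$, maximize
\begin{equation*}
\Phi(Q)=\frac{1}{n}\log\widetilde{\Psi}_m(Q)-\int_{S^{n-1}}\log h_Q\,d\mu .
\end{equation*}
Since $\widetilde{\Psi}_m$ is homogeneous of degree $nm$ and $|\mu|=1$, the functional $\Phi$ is invariant under dilation. It therefore suffices to maximize over bodies with, say, $\widetilde{\Psi}_m(Q)=1$, or equivalently with a fixed normalization.

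\textbf{Step 2: the Euler--Lagrange equation.} Suppose a maximizer $K$ with $o\in\operatorname{int}K$ exists. Perturb it by $[K,f]_t$ with $f$ even, and use $h_{[K,f]_t}\le h_Ke^{tf}$ together with the standard Wulff-shape argument, which replaces the support function by $h_Ke^{tf}$ at the optimum. The variational formula of Cai et al.\ (the first theorem quoted above) then gives
\begin{equation*}
\frac{1}{n\widetilde{\Psi}_m(K)}\,n\widetilde{c}_{n,m}\Big(\frac{\omega_n}{\omega_m}\Big)^n\int f\,d\widetilde{C}_m^a(K,\cdot)=\int f\,d\mu \qquad\text{for all even } f.
\end{equation*}
By \eqref{eq:1.5} the constant $\widetilde{c}_{n,m}(\omega_n/\omega_m)^n$ equals $1$, so $\widetilde{C}_m^a(K,\cdot)=\widetilde{\Psi}_m(K)\,\mu$. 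Both sides are even, so equality for all even $f$ suffices. Finally, $\widetilde{C}_m^a(\lambda K,\cdot)=\lambda^{nm}\widetilde{C}_m^a(K,\cdot)$, so a suitable dilation $\lambda K$ removes the constant factor.

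\textbf{Step 3: existence of a maximizer.} This is the main obstacle. Take a maximizing sequence $Q_i$, normalized by an $\mathrm{SL}(n)$-free scaling so that the largest radius equals $1$. By Blaschke selection, a subsequence converges to an origin-symmetric compact convex set $Q_0$. The task is to rule out a degenerate $Q_0$.

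Write $Q_i$ approximately as an ellipsoid with semi-axes $a_{1,i}\le\dots\le a_{n,i}=1$ along orthonormal directions $e_{k,i}$; by John's theorem this costs only bounded constants. Two estimates are then needed:
\begin{itemize}
\item[(a)] An upper bound of the form
\begin{equation*}
\frac{1}{n}\log\widetilde{\Psi}_m(Q_i)\le\sum_k\log a_{k,i}+C .
\end{equation*}
This should follow from $\mathrm{SL}(n)$-invariance of $\widetilde{\Psi}_m$: since $\widetilde{\Psi}_m$ has degree $nm$ and is affine invariant, $\widetilde{\Psi}_m(Q)\approx V(Q)^m$ up to constants for symmetric $Q$, by comparison with the ellipsoid and Grinberg's inequality. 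Then $\log\widetilde{\Psi}_m^{1/n}\approx \frac{m}{n}\sum_k\log a_{k,i}$. The factor $m/n$ should be checked carefully against the factor $1$ in the $\mu$ term.
\item[(b)] The lower bound
\begin{equation*}
\int\log h_{Q_i}\,d\mu\ge\sum_k\log a_{k,i}\,\mu(\text{partition pieces})-C,
\end{equation*}
obtained via the partition of the sphere into regions where $|u\cdot e_{k,i}|$ dominates.
\end{itemize}

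Combining (a) and (b) with the strict subspace concentration inequality, applied to the subspaces spanned by $e_{k,i},\dots,e_{n,i}$, via the Böröczky--LYZ summation-by-parts argument, shows that $\Phi(Q_i)\to-\infty$ whenever some $a_{1,i}\to0$. This contradicts maximality, so $Q_0$ has nonempty interior.

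\textbf{Main difficulty.} The delicate point is matching the homogeneity. Since the scaling of $\widetilde{\Psi}_m^{1/n}$ is $m$ rather than $1$, the critical concentration threshold needs to come out exactly as $\dim\xi/n$. I would first verify this by testing the functional on degenerating ellipsoids, and then adjust the exponent in $\Phi$ (for example, using $\frac{1}{nm}\log\widetilde{\Psi}_m$ against $\int\log h\,d\mu$) so that dilation invariance holds and the strict inequality yields the coercivity.
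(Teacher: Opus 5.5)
The paper does not prove this statement; it only quotes it from Cai--Leng--Wu--Xi, so your proposal has to stand on its own. Your route is the right one: the B\"or\"oczky--Lutwak--Yang--Zhang variational argument, carried over through the $\mathrm{SL}(n)$-invariance of $\widetilde{\Psi}_m$. However, Steps 1--3 as written fail for every $m\geq 2$.

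\textbf{The normalization error.} Since $\widetilde{\Psi}_m$ is homogeneous of degree $nm$ and $|\mu|=1$, your functional satisfies $\Phi(\lambda Q)=\Phi(Q)+(m-1)\log\lambda$. So it is not dilation invariant, $\sup\Phi=+\infty$, and no maximizer exists. The Euler--Lagrange equation you derive shows the same defect. The equation $\widetilde{C}_m^a(K,\cdot)=\widetilde{\Psi}_m(K)\mu$ is impossible, because by Lemma \ref{lemma: 7.3} the total mass of $\widetilde{C}_m^a(K,\cdot)$ is $m\widetilde{\Psi}_m(K)$. Estimate (a) is also false as stated. For an origin-centred ellipsoid, $\mathrm{SL}(n)$-invariance gives $\frac1n\log\widetilde{\Psi}_m=\frac mn\sum_k\log a_{k,i}+O(1)$, and this exceeds $\sum_k\log a_{k,i}+C$ once $a_{1,i}\to0$. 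Only the case $m=1$ goes through as written.

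\textbf{The fix.} The repair you suggest at the end is exactly the right one, and it also removes what you call the main difficulty. Take $J(Q)=\frac{1}{nm}\log\widetilde{\Psi}_m(Q)-\int_{S^{n-1}}\log h_Q\,d\mu$. It is dilation invariant. The Wulff-shape argument with Lemma \ref{lemma: 7.2} gives $\widetilde{C}_m^a(K,\cdot)=m\widetilde{\Psi}_m(K)\,\mu$, which is consistent in total mass. Replacing $K$ by $(m\widetilde{\Psi}_m(K))^{-1/(nm)}K$ then finishes the proof.

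\textbf{Coercivity.} Here you only need the upper bound from Grinberg's inequality, $\widetilde{\Psi}_m(Q)\le c_{n,m}V(Q)^m$. It gives $J(Q)\le\frac1n\log V(Q)-\int\log h_Q\,d\mu+C$. So $J$ is dominated, up to an additive constant, by the log-Minkowski functional of B\"or\"oczky--Lutwak--Yang--Zhang. Their estimate then applies verbatim, and the threshold is exactly $\dim\xi/n$ for every $m$; no test on ellipsoids is needed.

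\textbf{Indexing correction.} Take $a_{1,i}\le\dots\le a_{n,i}=1$ and pass to a subsequence with $e_{k,i}\to e_k$. The Abel partial sums are then $\frac jn-\mu(\{u:|u\cdot e_{l,i}|<\delta\text{ for all }l>j\})$. So the strict inequality \eqref{eq:1.3} must be applied to $\mathrm{span}\{e_1,\dots,e_j\}$, the short axes, where concentration of $\mu$ would drive $\int\log h_Q\,d\mu\to-\infty$. It should not be applied to $\mathrm{span}\{e_{k,i},\dots,e_{n,i}\}$ as you wrote.

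\textbf{Passing to the limit.} This uses the continuity in Lemma \ref{lemma: 7.4}. It also uses that the limit is origin-symmetric with nonempty interior, so it contains $o$ in its interior, as Lemma \ref{lemma: 7.2} requires.
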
 
\begin{definition}
    A finite set $U$ of unit vectors in $\mathbb{R}^n$ is said to be in \textit{general position} if $U$ is not contained in closed hemisphere of $S^{n-1}$ and any $n$ elements of $U$ are linearly independent.
\end{definition}

In this paper, we will prove the necessary and sufficient condition of the affine dual Minkowski problem for general measures when $m>1$. We first prove the case for discrete measures and then prove the case for general measures by using the approximating method.
\begin{theorem}
    If $\gamma_1,\cdots,\gamma_N\in\mathbb{R}^+, N\geq n+1, n\geq3$ and the unit vectors $u_1,\cdots,u_N$ are in general position and $m\in\{2,\cdots,n-1\}$, then there exists a polytope $P\in\mathcal{K}_{(o)}^n$such that
    \begin{equation*}
        \widetilde{C}_m^a(P,\cdot)=\sum_{k=1}^N\gamma_k\delta_{u_k}.
    \end{equation*}
\end{theorem}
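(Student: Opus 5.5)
We prove the theorem with a variational argument of the type used for the discrete $L_p$ and dual Minkowski problems, with the functional $\widetilde{\Psi}_m$ in place of volume. For $z=(z_1,\dots,z_N)\in\mathbb{R}^N$ with $z_k>0$ we set
\[
P(z)=\bigcap_{k=1}^N\{x:x\cdot u_k\le z_k\}.
\]
Since $u_1,\dots,u_N$ do not lie in a closed hemisphere, $P(z)$ is a polytope containing the origin in its interior. We study the problem
\[
\sup\Big\{\Phi(z)=\sum_{k=1}^N\gamma_k\log z_k \;:\; z_k>0,\ \widetilde{\Psi}_m(P(z))=1\Big\}.
\]
Because of the $\text{SL}(n)$ invariance and homogeneity of degree $mn$ of $\widetilde{\Psi}_m$, we could instead maximize $\widetilde{\Psi}_m(P(z))$ subject to $\sum\gamma_k\log z_k=0$. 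The choice between the two formulations is immaterial, and we use the first.

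\textbf{Step 1: Lagrange multiplier.} Suppose $z^0$ is a maximizer. We apply the variational formula of Cai et al. (the theorem quoted in the introduction) with $K=P(z^0)$ and $f=\sum_k\tau_k 1_{\{u_k\}}$, extended continuously. This is legitimate because the Wulff shape $[P,f]_t$ only sees the values of $f$ at the facet normals, up to a harmless modification. The formula gives
\[
\frac{\partial}{\partial z_k}\widetilde{\Psi}_m(P(z))=\frac{n}{z_k}\,\widetilde{C}_m^a(P(z),\{u_k\}).
\]
The Lagrange condition then yields $\gamma_k=\lambda\, n\,\widetilde{C}_m^a(P(z^0),\{u_k\})$ for every $k$. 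If some $u_k$ is not a facet normal of $P(z^0)$, its measure is $0$ while $\gamma_k>0$, which is a contradiction, so every $u_k$ is a facet normal. We also have $\lambda>0$. Rescaling $P(z^0)$ by $c>0$ multiplies $\widetilde{C}_m^a$ by $c^{mn}$, since $\rho^m$ scales as $c^m$ and $\text{vol}_m^{\,n-1}$ scales as $c^{m(n-1)}$. Choosing $c$ appropriately therefore gives a polytope $P$ with $\widetilde{C}_m^a(P,\cdot)=\sum_k\gamma_k\delta_{u_k}$.

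\textbf{Step 2: Existence of a maximizer.} This is the main obstacle. Take a maximizing sequence $z^j$ and translate it freely, noting that $\Phi$ is not translation invariant, so we keep the origin in the interior throughout. We must show that $P(z^j)$ neither degenerates nor escapes to infinity while keeping $\Phi$ bounded below.

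Along the constraint set, the lengths of the sections are controlled: $\widetilde{\Psi}_m(P)=1$ together with the Grinberg inequality $\widetilde{\Psi}_m\le c\,V^{m}$ gives $V(P(z^j))\ge c>0$. This prevents collapse to a lower-dimensional set. To rule out unboundedness we argue as follows. If $\max_k z^j_k\to\infty$, then by general position we use the standard argument of Zhu and of Böröczky--Lutwak--Yang--Zhang for the discrete $L_p$/dual problems: either $P(z^j)$ becomes long and thin, or it escapes.

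The key estimate needed here is an upper bound for $\widetilde{\Psi}_m$ of a long thin polytope. The hypothesis $m\ge2$ and $n\ge3$ is exactly what makes this work. If $P$ has diameter $R$ but $\widetilde{\Psi}_m(P)=1$, then most $m$-dimensional sections through a long direction have $\text{vol}_m$ comparable to $R\cdot(\text{width})^{m-1}$. Averaging the $n$-th power over $G(n,m)$, the long direction is nearly contained in $\xi$ only for a set of measure $\sim\epsilon^{n-m}$. This gives a bound forcing the $\gamma$-weighted logarithm of the other support numbers toward $-\infty$ faster than $\log R$ grows. General position guarantees that any $n$ of the $u_k$ span, so each $z_k$ is bounded by a combination of the others. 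We expect the precise form of this volume-of-sections estimate to need the most care, and we would first attempt a direct computation in an adapted orthonormal frame, comparing $P$ with a box.

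Once $z^j$ is shown to stay in a compact subset of $(0,\infty)^N$, continuity of $\widetilde{\Psi}_m$ in the Hausdorff metric gives a maximizer, and Step 1 completes the proof.
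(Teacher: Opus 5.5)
You have a genuine gap: the extremal problem you set up has no maximizer once $N\ge n+2$. Your Step 1 is fine for a two-sided critical point in $z$-space. Because you let $z$ range over all of $(0,\infty)^N$, half-spaces can become redundant. The positive dependences $\sum_k\lambda_ku_k=0$, $\lambda\ge0$, form a cone of dimension $N-n\ge2$ that contains a strictly positive vector. Moving inside it until some coefficient $\lambda_k$ vanishes gives an index $k$ for which $\{u_j\}_{j\ne k}$ is still not contained in a closed hemisphere. This holds because a nontrivial dependence involves at least $n+1$ of the vectors by general position, so its support spans $\mathbb{R}^n$. Then $Q=\bigcap_{j\ne k}\{x:x\cdot u_j\le z_j\}$ is bounded, and $P(z)=Q$ for every $z_k\ge h(Q,u_k)$. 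Normalizing $\widetilde{\Psi}_m(Q)=1$, the constraint holds for all such $z_k$ while $\Phi(z)=\gamma_k\log z_k+\mathrm{const}\to\infty$. Your Step 2 dichotomy (``long and thin, or escapes'') misses exactly this third way for $\max_k z^j_k\to\infty$, in which $P(z^j)$ does not move at all. For $N=n+1$, every $P(z)$ is a homothet of one simplex, so your scheme does work there. Relatedly, the two formulations you call interchangeable are opposite problems. By $mn$-homogeneity, maximizing $\widetilde{\Psi}_m$ on $\{\sum_k\gamma_k\log z_k=0\}$ is equivalent to \emph{minimizing} $\Phi$ on $\{\widetilde{\Psi}_m=1\}$.

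Restricting to tight vectors $z_k=h(P(z),u_k)$, i.e.\ to $P\in\mathcal{P}(u_1,\dots,u_N)$, removes the blow-up but breaks your argument that every $u_k$ is a facet normal. If $F(P,u_k)$ is a vertex, raising $z_k$ is no longer an admissible variation. The admissible variation, cutting that vertex at depth $\delta$, lowers $\Phi$ by order $\delta$ while $\widetilde{\Psi}_m$ changes only at higher order, which is compatible with a maximum. This is why the paper goes the other way. Lemma~\ref{le:5.1} minimizes $\sum_k\gamma_k\log h(Q,u_k)$ over $Q\in\mathcal{P}_N(u_1,\dots,u_N)$ with $\widetilde{\Psi}_m(Q)=\sum_k\gamma_k$. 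The same truncation ($\Phi$ drops like $\delta$, $\widetilde{\Psi}_m$ by $O(\delta^m)$) then shows that a minimizing limit cannot lose facets, and Lemma~\ref{le:3.1} supplies the Lagrange step. That truncation is where the paper uses $m\ge2$, not in an estimate for long thin bodies. Compactness comes from Theorem~\ref{th:4.3}, where general position and the closure relation $\sum_j\mathrm{vol}_{n-1}(F(P_i,u_j))\,u_j=0$ are used to rule out degenerating members of $\mathcal{P}(u_1,\dots,u_N)$. Your section-volume heuristic could not replace this: for a needle $\widetilde{\Psi}_m$ is comparable to $V^m$, so whether $\Phi$ goes up or down would depend on the $\gamma_k$. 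One caution if you switch to minimizing with the origin fixed. Translating $Q$ so that $o$ approaches the relative interior of a facet sends $\sum_k\gamma_k\log h(Q,u_k)\to-\infty$ while $\widetilde{\Psi}_m$ stays bounded away from $0$. The position of $o$ therefore has to be controlled separately. In Zhu's argument this is the job of the inner maximum over translations; the paper's $\Phi_P(\xi)$ is maximized trivially at $\xi=o$ and does not do it.
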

\begin{theorem}
    For each nonzero finite Borel measure $\mu$ on $S^{n-1}$, $n\geq3$ and $m\in\{2,\cdots,n-1\}$, there exists a convex body $K\in\mathcal{K}_o^n$, $\text{dim}K=n$, such that 
    \begin{equation*}
        \widetilde{C}_m^a(K,\cdot)=\mu
    \end{equation*}
    if and only if $\mu$ is not concentrated on a closed hemisphere.
\end{theorem}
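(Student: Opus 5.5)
Necessity and sufficiency are handled separately. Sufficiency goes through the discrete theorem followed by approximation and a compactness argument.

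\emph{Necessity.} Suppose $K\in\mathcal{K}_o^n$ has $\dim K=n$ and $\widetilde{C}_m^a(K,\cdot)=\mu$. Then $\mu$ cannot be concentrated on a closed hemisphere $\{v: v\cdot e\ge 0\}$. The reason is that the density $\rho_K(u)^m\mathcal{R}_m^*(\text{vol}_m(K\cap\cdot)^{n-1})(u)$ is positive on the set of $u$ with $\rho_K(u)>0$, and this set is a full open hemisphere at least. Indeed, for any $e$, $K$ has interior points with $x\cdot e<0$, since $o\in K$ and $K$ is full dimensional. If the radial direction $u$ of such a point $\rho_K(u)u$ has $u\cdot e<0$, then every outer normal $v$ at that boundary point satisfies $v\cdot (\rho_K(u) u)\ge h$-type positivity. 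More precisely, I would use the standard argument of \cite{MR3573332} for dual curvature measures. Take the set of boundary points whose normals lie in $\{v\cdot e<0\}$; it corresponds, via $\bm\alpha^*_K$, to a set of directions $u$ of positive spherical measure with $\rho_K(u)>0$. This is because the point of $\partial K$ minimizing $x\cdot e$ has a neighbourhood on $\partial K$ with normals near $-e$. Hence $\mu(\{v\cdot e<0\})>0$. The case where $o\in\partial K$ needs care, because there $\rho_K$ vanishes on some directions. Still, near the minimizer of $x\cdot e$ the body is at positive distance from $o$ whenever $\min_K x\cdot e<0$. If instead $\min_K x\cdot e=0$, I would take the face in direction $-e$ and argue via nearby normals, using the positivity of $\mathcal{R}_m^*$ of a positive function.

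\emph{Sufficiency.} Suppose $\mu$ is not concentrated on a closed hemisphere. Choose discrete measures $\mu_j=\sum_k\gamma_{k}^{(j)}\delta_{u_k^{(j)}}$ with the $u_k^{(j)}$ in general position and $\mu_j\to\mu$ weakly; this is done by partitioning $S^{n-1}$ finely and perturbing the atoms slightly. For $j$ large, the $\mu_j$ are uniformly not concentrated on hemispheres: there is $c>0$ with $\int (u\cdot v)_+\,d\mu_j(u)\ge c$ for all $v\in S^{n-1}$. The discrete theorem above gives polytopes $P_j\in\mathcal{K}_{(o)}^n$ with $\widetilde{C}_m^a(P_j,\cdot)=\mu_j$.

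The main obstacle is a uniform bound: $P_j$ must stay bounded and must not degenerate, meaning its volume stays bounded below. Note that $\widetilde{C}_m^a$ is homogeneous of degree $m+m(n-1)=mn$ under dilation, while $|\mu_j|=n\,\widetilde{\Psi}_m(P_j)$ by the variational formula with $f\equiv1$. For boundedness, let $R_j=\rho_{P_j}(v_j)=\max\rho_{P_j}$. Then $h_{P_j}(u)\ge R_j(u\cdot v_j)_+$, and I would try to bound the total mass $|\mu_j|$ below by a quantity growing with $R_j$ times the hemisphere constant $c$, giving $R_j\le C$. To do this, I would compare via the section functional: $\widetilde{\Psi}_m(P_j)$ is affine invariant but is controlled by $V(P_j)^m$ through Grinberg's inequality. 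I would therefore first use the $\text{SL}(n)$ invariance to normalize $P_j$ (for example to John position) and then exploit the fixed mass. Here the hypothesis $m\ge2$ should enter: the density $\mathcal{R}_m^*(\text{vol}_m(K\cap\cdot)^{n-1})$ stays bounded below for thin bodies, because $m$-dimensional sections through long directions are large. This should rule out collapse to a lower-dimensional set, and it is the hardest estimate.

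Once the $P_j$ are bounded with volume bounded below, Blaschke selection gives $P_j\to K$ with $o\in K$ and $\dim K=n$. The final step is weak continuity of $\widetilde{C}_m^a(\cdot,\cdot)$ under Hausdorff convergence for bodies that contain $o$ possibly on the boundary. This follows from continuity of $\text{vol}_m(K\cap\xi)$ in $K$, the uniform convergence of the dual Radon transform, and the a.e. convergence of the reverse radial Gauss images, as in \cite{MR3573332}. Together these yield $\widetilde{C}_m^a(K,\cdot)=\mu$.
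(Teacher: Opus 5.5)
Your sufficiency plan has the same skeleton as the paper's: general-position discrete approximation, the polytope theorem, Blaschke selection, and weak continuity as in Lemma \ref{lemma: 7.5}. However, both halves of the proposal have genuine gaps.

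\emph{Necessity.} Your argument works when $o\in\mathrm{int}K$. There the radial map $u\mapsto\rho_K(u)u$ is bi-Lipschitz onto $\partial K$, and $\rho_K^m\mathcal{R}_m^*(|K\cap\cdot|^{n-1})$ is bounded above and below. So $\widetilde C_m^a(K,\cdot)$ is comparable to $S_{n-1}(K,\cdot)$, which is not concentrated on any closed hemisphere.

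For $o\in\partial K$ both of your sub-cases fail, and no repair is possible, because the ``only if'' direction is false there. Take $n=3$ and $K=\mathrm{conv}(\{o\}\cup T)$ with $T$ a triangle in the plane $\{x_3=1\}$. Then $\rho_K=0$ outside the cone $N(K,o)^*$. For every $u\in S^{n-1}\cap\mathrm{int}\,N(K,o)^*$ the point $\rho_K(u)u$ lies in $\mathrm{relint}\,T$, whose only outer unit normal is $e_3$. By Lemma \ref{lemma: 7.1}, $\widetilde C_m^a(K,\cdot)=m\widetilde\Psi_m(K)\,\delta_{e_3}$, which is nonzero and concentrated on a closed hemisphere.

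The normals near $-e$ that you want to exploit sit at the apex, on the lateral faces, or at a vertex of $T$. These points are of the form $\rho_K(u)u$ only for a null set of directions $u$, and elsewhere the factor $\rho_K^m$ vanishes. Now take $e=(1,0,\tfrac1{10})$ and let $T$ meet $\{x_1<-\tfrac1{10}\}$. Then $\min_K x\cdot e<0$, yet $\widetilde C_m^a(K,\{v:v\cdot e<0\})=0$, so your first sub-case fails too. The paper simply calls necessity obvious; it holds only if one requires $o\in\mathrm{int}K$.

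\emph{Sufficiency.} The lower volume bound via Grinberg's inequality is fine (the paper instead uses continuity of $\widetilde\Psi_m$). The uniform bound on $P_j$, which you rightly call the hardest step, is not proved, and your proposed mechanism cannot give it.

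The total mass is $|\mu_j|=m\widetilde\Psi_m(P_j)$ by Lemma \ref{lemma: 7.3}, not $n\widetilde\Psi_m(P_j)$, and $\widetilde\Psi_m$ is $\mathrm{SL}(n)$-invariant. Suppose $P$ has normals in general position and $\phi\in\mathrm{SL}(n)$ with $\|\phi\|\to\infty$. Then $\phi P$ still has normals in general position and the same mass, while its diameter blows up. So no lower bound for $|\mu_j|$ growing with $R_j$ exists. Moreover, normalizing $P_j$ by $\mathrm{SL}(n)$ replaces $\mu_j$ by a linear image of it, which destroys the uniform hemisphere constant $c$ you intend to use. The estimate must feed the quantitative hemisphere condition through the equation, or through the extremal problem producing $P_j$, in a way that sees more than the total mass. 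Your sketch does not do this.

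The paper closes this step by citing Theorem \ref{th:4.3} together with Lemma \ref{lemma: 7.3}. But Theorem \ref{th:4.3} concerns one fixed set of normals, and the $\mathrm{SL}(n)$ example shows it cannot hold uniformly once the normals vary with $j$. So the paper does not supply the missing estimate either; the hypothesis on $\mu$ plays no quantitative role in its limiting argument.

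Also, in the paper $m\ge2$ is used only in the discrete step, not in the compactness argument. There, cutting off a vertex costs $O(\delta^m)$ in $\widetilde\Psi_m$ against a gain of order $\delta$ in $\sum_k\gamma_k\log h(\cdot,u_k)$.
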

\section{Preliminaries}

In this section, for quick later reference, we collect some basic facts about convex bodies. The books of Helgason \cite{MR1790156}, Schneider \cite{MR3155183},  Koldobsky \cite{MR2132704}, Gardner \cite{MR2251886} are good general references.

 Let $\mathbb{R}^n$ denote $n-$dimensional Euclidean space with canonical inner product $x\cdot y$, for $x,y\in\mathbb{R}^n$. Write $\Vert x\Vert=\sqrt{x\cdot x}$ for the norm of $x$. Let $S^{n-1}$ denote the unit sphere in $\mathbb{R}^n$. We normalize $x$ as $\langle x\rangle=x/\Vert x\Vert$ for $x\in\mathbb{R}^n\backslash\{0\}$. If $x_1,\cdots,x_{n-1}\in\mathbb{R}^n$ are linearly independent, we write $x_1\wedge\cdots\wedge x_{n-1}$ for the outer product for $x_1,\cdots,x_{n-1}.$
 If $1\leq i \leq n$, $K$ is a Borel subset of $\mathbb{R}^n$, and $K$ is contained in an $i-$dimensional affine subspace of $\mathbb{R}^n$ but not in any affine subspace of lower dimension, then let $|K|$ be the $i-$dimensional Lebesgue measure of $K$. The $m$ dimensional volume of the unit ball in $\mathbb{R}^m$ is denoted by $\omega_m$.  For a finite measure $\mu$ on $S^{n-1}$, we shall write $|\mu|$ for its total mass; that is $|\mu|=\mu(S^{n-1})$ . We denote the support of a measure $\mu$ by $\text{supp}(\mu).$ For the distance betwwen two point $x,y$, we denote $d(x,y).$

\subsection{\mdseries\itshape{Convex bodies}}\hfill

As introduced in the previous section, the set of convex bodies in $\mathbb{R}^n$ is denoted by $\mathcal{K}^n$, the set of convex bodies containing the origin is denoted by $\mathcal{K}_o^n$, the set of convex bodies containing the origin in their relative interiors is denoted by $\mathcal{K}_{(o)}^n$.

The \textit{support function}, $h_K:S^{n-1}\to\mathbb{R}$, of a nonempty compact convex set $K\in\mathcal{K}^n$, is the continuous function on $S^{n-1}$, defined by 
\begin{equation*}
    h_{K}(v)=\max\{x\cdot v:x\in K\}.
\end{equation*}
$h_K(v)$ also denoted by $h(K,v)$. A nonempty, compact, convex set is uniquely determined by its support function. The collection of nonempty compact convex sets can be viewed as a metric space with the \textit{Hausdorff metric}, where the distance  $d(K,L)$ between $K,L$ is the max-norm $\Vert h_K-h_L\Vert_\infty$.

For a convex body $K$ in $\mathbb{R}^n$, and $u\in S^{n-1}$, the \textit{support hyperplane} $H(K,u)$ in direction $u$ is defined by 
 \begin{equation*}
     H(K,u)=\{x\in\mathbb{R}^n:x\cdot u =h(K,u)\}.
 \end{equation*}
the \textit{support set} $F(K,u)$ in direction $u$ is defined by 
\begin{equation*}
    F(K,u)=K\cap H(K,u).
\end{equation*} 
Let half spaces $H^-(u,a)$ be
\begin{equation*}
    H^-(u,a)=\{x\in \mathbb{R}^n:x\cdot u\leq a\}.
\end{equation*}
For a compact $K\subset\mathbb{R}^n$, the diameter of it is defined by
\begin{equation*}
    \mathrm{diam}(K)=\max\{|x-y|:x,y\in K\}.
\end{equation*}

Let $\mathcal{P}$ be the set of polytopes in $\mathbb{R}^n$. If $u_1,\cdots,u_N\in S^{n-1}$ were not contained in any closed hemisphere, let $\mathcal{P}(u_1,\cdots,u_N)$ be the subset of $\mathcal{P}$ such that a polytope $P\in\mathcal{P}(u_1,\cdots,u_N)$ if
\begin{equation*}
    P=\bigcap_{k=1}^NH^-(P,u_k).
\end{equation*}
Obviously, if $P\in\mathcal{P}(u_1,\cdots,u_N)$, then $P$ has at most $N$ facets, and the outer unit normals of $P$ are a subset of $\{u_1,\cdots,u_N\}$. Let $\mathcal{P}_N(u_1,\cdots,u_N)$ be the subset of $\mathcal{P}(u_1,\cdots,u_N)$ such that a polytope $P\in\mathcal{P}_N(u_1,\cdots,u_N)$ if $P\in\mathcal{P}(u_1,\cdots,u_N)$, and $P$ has exactly $N$ facets.
\subsection{\mdseries\itshape{Radial Gauss image and its reverse}}\hfill

For $K\in\mathcal{K}^n$ and $z\in\partial K$, we write $N(K,z)$ to denote the \textit{normal cone} at $z$; namely,
\begin{equation*}
    N(K,z)=\{y\in\mathbb{R}^n:y\cdot(x-z)\leq0\text{ for }x\in K\}.
\end{equation*}
If $z\in\text{int}K$, then simply $N(K,z)=\{o\}$.

Let $K\in\mathcal{K}^n$ with $\text{int}K\neq\emptyset$. We recall that the so-called singular point $z\in\partial K$ where $\text{dim}N(K,z)\geq2$ form a Borel set of zero $\mathcal{H}^{n-1}$ measure, and hence its complement, the set of smooth points denoted by $\partial'K$ is also a Borel set. For $z\in\partial'K,$ we write $\nu_K(z)$ to denote the unique exterior normal at $z$. In addition, for any $z\in\partial K$, we define the \textit{Gauss image} of $K$ at $z$ as
\begin{equation*}
    \bm{\nu}_K(z)=N(K,z)\cap S^{n-1}.
\end{equation*}

For a Borel set $\eta\subset S^{n-1},$ we define the \textit{inverse Gauss image} of $K$ as
\begin{equation*}
    \bm{\nu}_K^{-1}(\eta)=\bigcup_{u\in\eta}F(K,u),
\end{equation*}
which is the set of all $z\in\partial K$ with $N(K,z)\cap\eta\neq\emptyset.$ 
Define the dual of $N(K,o)$ as
\begin{equation*}
    N(K,o)^*=\{y\in\mathbb{R}^n:y\cdot x\leq0\text{ for }x\in N(K,o)\}=\{\lambda x:\lambda\geq0\text{ and }x\in K\}.
\end{equation*}
For $K\in\mathcal{K}_o^n$ and $\eta\subset S^{n-1}$, the \textit{radial Gauss image} $\bm{\alpha}_K(\eta)$ is the set of outer unit normals of $K$ emanating from the boundary points $\rho_K(u)u$, for some $u\in\eta$; that is,
\begin{equation*}
    \bm{\alpha}_K(\eta)=\bigcup_{u\in\eta}\{v\in S^{n-1}:\rho_K(u)u\cdot v=h_K(v)\}.
\end{equation*}
The \textit{reverse radial Gauss image} $\bm{\alpha}_K^*(\eta)$ is the set of all radial directions $u\in S^{n-1}$, such that an element in $\eta$ is an outer unit normal to $\partial K$ at the point $\rho_K(u)u\in\partial K$; that is,
\begin{equation*}
    \bm{\alpha}_K^*(\eta)=\bigcup_{v\in\eta}\{u\in S^{n-1}:\rho_K(u)u\cdot v=h_K(v)\}.
\end{equation*}
Denote $\omega_K\subset S^{n-1}$ as the set of $u\in S^{n-1}$ such that $\bm{\alpha}_K(\{u\})$ contains more than one point; that is, the point $\rho_K(u)u\in\partial K$ has more than one outer unit normal. We now define the \textit{radial Gauss map}
\begin{equation*}
    \alpha_K:S^{n-1}\setminus\omega_K\longrightarrow S^{n-1},
\end{equation*}
satisfying $\bm{\alpha}_K(\{u\})=\{\alpha_K(u)\}$. It is well known that $\mathcal{H}^{n-1}(\omega_K)=0$(see \cite{MR3155183}).

\begin{lemma}\cite[Lemma 2.2]{Boeroeczky2019a}
If $K\in\mathcal{K}_o^n$ with $\mathrm{int} K\neq\emptyset$, then  
\begin{equation}\label{equation: 2.2}
    \bm{\alpha}_K^*(S^{n-1}\setminus N(K,o))=S^{n-1}\setminus(\mathrm{int}N(K,o)^*)
\end{equation}
and
\begin{equation}\label{equation: 2.3}
S^{n-1}\cap(\mathrm{int}N(K,o)^*)\subset\bm{\alpha}_K^*(S^{n-1}\setminus N(K,o))\subset S^{n-1}\cap N(K,o)^*.    
\end{equation}
\end{lemma}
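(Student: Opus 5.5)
The plan is to prove everything directly from the definitions of $\bm{\alpha}_K^*$, $N(K,o)$ and $N(K,o)^*=\{\lambda x:\lambda\ge 0,\ x\in K\}$, with one pointwise observation doing most of the work: for $u\in S^{n-1}$, the boundary point $x_u=\rho_K(u)u$ equals $o$ exactly when $u\notin N(K,o)^*$. When $x_u=o$, the set of outer unit normals at $x_u$ is $N(K,o)\cap S^{n-1}$. Throughout we use that $o\in K$ forces $h_K\ge 0$. One point of reading must be fixed first. When $o\in\mathrm{int}K$ we have $N(K,o)=\{o\}$ and $N(K,o)^*=\mathbb{R}^n$. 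Then the right side of (\ref{equation: 2.2}) is empty, while $\bm{\alpha}_K^*(S^{n-1}\setminus N(K,o))=\bm{\alpha}_K^*(S^{n-1})=S^{n-1}$. So the first display can only hold, and is compatible with (\ref{equation: 2.3}), in the form of \cite[Lemma 2.2]{Boeroeczky2019a}, namely with the argument $S^{n-1}\cap N(K,o)$. I will prove that identity together with (\ref{equation: 2.3}) exactly as written.

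\emph{Step 1: the identity.} First take $u\notin\mathrm{int}N(K,o)^*$; we show $u\in\bm{\alpha}_K^*(S^{n-1}\cap N(K,o))$. If $u\notin N(K,o)^*$, then $x_u=o$, and any $v\in N(K,o)\cap S^{n-1}$ satisfies $v\cdot o=0=h_K(v)$. This set is nonempty because $o\in\partial K$ here. If instead $u\in\partial N(K,o)^*$, then $u\ne o$ lies on the boundary of the closed convex cone $N(K,o)^*$. Take a supporting hyperplane of this cone at $u$ with unit normal $v$. Then $v\cdot y\le 0$ on the cone, so $v\in N(K,o)$, and also $v\cdot u=0$. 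From $h_K(v)\le 0\le h_K(v)$ we get $h_K(v)=0=x_u\cdot v$. Conversely, suppose $v\in N(K,o)\cap S^{n-1}$ and $x_u\cdot v=h_K(v)=0$. If $u$ were in $\mathrm{int}N(K,o)^*$, then $x_u\ne o$ and $v$ would be a nonzero linear functional that is $\le 0$ on a cone and vanishes at an interior point of it. That is impossible, so $u\notin\mathrm{int}N(K,o)^*$.

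\emph{Step 2: the inclusions (\ref{equation: 2.3}).} For the right inclusion, take $u$ with $x_u\cdot v=h_K(v)$ for some $v\notin N(K,o)$. If $u\notin N(K,o)^*$, then $x_u=o$, so $v$ is a normal at $o$, i.e.\ $v\in N(K,o)$, a contradiction. Hence $u\in N(K,o)^*$. For the left inclusion, take $u\in S^{n-1}\cap\mathrm{int}N(K,o)^*$. Then $\rho_K(u)>0$, and $x_u\in\partial K$ has some outer unit normal $v$. By the argument at the end of Step 1, $v\in N(K,o)$ would force $v\cdot x_u<0\le h_K(v)$, which is impossible. So $v\notin N(K,o)$, and therefore $u\in\bm{\alpha}_K^*(S^{n-1}\setminus N(K,o))$.

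The main obstacle is the boundary case $u\in\partial N(K,o)^*$ in Step 1. Here $x_u$ may be nonzero, and we must produce a normal at $x_u$ lying in $N(K,o)$. This is handled by the supporting-hyperplane choice of $v$, which automatically satisfies $h_K(v)=0$. A second subtlety is that $N(K,o)^*$ need not be closed when $K$ is not a polytope. I will therefore work with its closure: its interior agrees with that of $N(K,o)^*$, and the supporting-hyperplane argument goes through unchanged.
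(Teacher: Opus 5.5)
The paper gives no proof of this lemma; it only cites B\"or\"oczky--Fodor, so there is no route of its own to compare against. Your direct argument from the definitions is correct and is the standard proof of this fact. It splits into the cases $u\notin N(K,o)^*$, $u\in\partial N(K,o)^*$ and $u\in\mathrm{int}\,N(K,o)^*$, and uses a supporting hyperplane of the cone, which lands in $N(K,o)$ and satisfies $h_K(v)=0$, in the boundary case.

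You are also right that the first display is misprinted and must read $\bm{\alpha}_K^*(S^{n-1}\cap N(K,o))=S^{n-1}\setminus\mathrm{int}\,N(K,o)^*$. In fact, as printed it fails for every admissible $K$, not only when $o\in\mathrm{int}\,K$. Combined with the (true) left inclusion of the second display, it would force $S^{n-1}\cap\mathrm{int}\,N(K,o)^*=\emptyset$. That is false whenever $\mathrm{int}\,K\neq\emptyset$.

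One small precision. If $N(K,o)^*$ is read as the closed polar cone, your pointwise claim ``$x_u=o$ exactly when $u\notin N(K,o)^*$'' fails at boundary directions of the closed cone that lie outside the cone generated by $K$. However, you only ever use the implication $u\notin N(K,o)^*\Rightarrow x_u=o$, together with $u\in\mathrm{int}\,N(K,o)^*\Rightarrow\rho_K(u)>0$. Both hold under either reading, so the argument is unaffected.
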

\subsection{\mdseries\itshape{Radon transform and its dual}}\hfill

Let $m\in\{1,\cdots,n-1\}$. The \textit{m-dimensional Radon transform} $\mathcal{R}_m$ is a linear operator from $C(S^{n-1})$ into $C(G(n,m))$ given by
\begin{equation*}
    \mathcal{R}_mf(\xi)=\int_{S^{n-1}\cap\xi}f(w)dw.
\end{equation*}
Here $dw$ is used as an abbreviation for $d\mathcal{H}^{n-1}(w)$ on $S^{n-1}\cap\xi$.The \textit{m dimensional dual Radon transform $\mathcal{R}_m^*:C(G(n,m))\to C(S^{n-1})$} is defined by
\begin{equation}\label{equation: 2.1}
    \mathcal{R}_m^*F(u)=\frac{m\omega_m}{n\omega_n}\int_{G_{u}(n-1,m-1)}F(\text{span}\{u,\zeta\})d\nu_{m-1}(\zeta),
\end{equation}
where $G_u(n-1,m-1)$ denotes the Grassmannian of $m-1$ dimensional subspaces of $u^{\perp}$ and $\nu_{m-1}$ is the Harr measure on $G_u(n-1,m-1)$, and always, we use "Haar measure" to mean "Haar probability measure".
\subsection{\itshape The definition of affine dual curvature measure}

The \textit{affine dual curvature measure} of $K\in\mathcal{K}_{(o)}^n$ is defined by
\begin{equation*}
    \widetilde{C}_m^a(K,\eta)=\int_{\bm{\alpha}_K^*(\eta)}\rho_K^m(u)\mathcal{R}_m^*(|K\cap\cdot|^{n-1})(u)du,
\end{equation*}
for each Borel set $\eta\subset S^{n-1}.$ Note that $\widetilde{C}_{m}^a(K,\cdot)$ are $mn-$th homogeneous. We extended the affine dual curvature measure $\widetilde{C}_m^a(K,\cdot)$ to $K\in\mathcal{K}_o^n$ as follows, for a Borel set $\eta\subset S^{n-1}$,
\begin{equation}
\widetilde{C}_m^a(K,\eta)=0, \quad \text{if dim}K\leq n-1, \label{equation:7.1}
\end{equation}
and
\begin{equation}
\widetilde{C}_m^a(K,\eta)=\int_{\bm{\alpha}_K^*(\eta)}\rho_K^m(u)\mathcal{R}_m^*(|K\cap\cdot|^{n-1})(u)du, \quad\text{if int}K\neq\emptyset.\label{equation: 7.4}  
\end{equation}
Here, if $\text{int}K\neq\emptyset$, then $\rho_K(u)$ and $\mathcal{R}_m^*(|K\cap\cdot|^{n-1})(u)$ are continuous with respect to $u$ on $S^{n-1}\backslash\partial N(K,o)^*$. Moreover, by \cite[Lemma 2.3]{Boeroeczky2019a}, if $K\in\mathcal{K}_o^n$ with $\text{int} K\neq\emptyset$ and $\eta\subset S^{n-1}$ is a Borel set, then $\bm{\alpha_K}^*(\eta)\subset S^{n-1}$ is Lebesgue measurable. Therefore $\widetilde{C}_m^a(K,\cdot)$ is well-defined.

The following lemma was proved by Lin et al.\cite{lin2026lpminkowskiproblemsaffine}.
\begin{lemma}\label{lemma: 7.1}
    If $n\geq2$ and $m\in\{1,\cdots,n-1\}, K\in\mathcal{K}_o^n$ with $\mathrm{int}K\neq\emptyset$, and $g:S^{n-1}\to[0,\infty)$ is Borel measurable, then
    \begin{equation}\label{equation: 7.2}
        \int_{S^{n-1}}g(u)d\widetilde{C}_m^a(K,u)=\int_{S^{n-1}\cap(\mathrm{int}N(K,o)^*)}g(\alpha_K(u))\rho_K^m(u)\mathcal{R}_m^*(|K\cap\cdot|^{n-1})(u)d\mathcal{H}^{n-1}(u).
    \end{equation}
\end{lemma}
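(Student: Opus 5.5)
The statement to be proved is Lemma \ref{lemma: 7.1}: the integral formula \eqref{equation: 7.2} for $\widetilde{C}_m^a(K,\cdot)$ when $K\in\mathcal{K}_o^n$ has nonempty interior. I would reduce it to a change of variables through the radial Gauss map, in the way this is done for dual curvature measures in \cite{Boeroeczky2019a}.

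\textbf{Step 1 (indicator functions).} Take $g=\mathbf{1}_\eta$ for a Borel set $\eta\subset S^{n-1}$. Write
\[
\phi(u)=\rho_K^m(u)\,\mathcal{R}_m^*(|K\cap\cdot|^{n-1})(u).
\]
This is a nonnegative Lebesgue measurable function on $S^{n-1}$, continuous off $\partial N(K,o)^*$.

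By \eqref{equation: 7.4}, the left side of \eqref{equation: 7.2} equals $\int_{\bm{\alpha}_K^*(\eta)}\phi\,du$. So it suffices to show that $\bm{\alpha}_K^*(\eta)$ and
\[
\{u\in S^{n-1}\cap \mathrm{int}N(K,o)^*\setminus\omega_K:\ \alpha_K(u)\in\eta\}
\]
differ by an $\mathcal{H}^{n-1}$-null set.

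\textbf{Step 2 (comparing the two sets).} First, $\mathcal{H}^{n-1}(\omega_K)=0$. For $u\notin\omega_K$ we have $\bm{\alpha}_K(\{u\})=\{\alpha_K(u)\}$, so $u\in\bm{\alpha}_K^*(\eta)$ if and only if $\alpha_K(u)\in\eta$.

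Second, the set $S^{n-1}\setminus \mathrm{int}N(K,o)^*$ must be shown negligible for the integral. There are two cases.
\begin{itemize}
\item If $o\in\mathrm{int}K$, then $N(K,o)^*=\mathbb{R}^n$ and there is nothing to prove.
\item If $o\in\partial K$, then $S^{n-1}\cap\partial N(K,o)^*$ is the boundary of a convex cone intersected with the sphere, so it is $\mathcal{H}^{n-1}$-null. Outside $N(K,o)^*$ we have $\rho_K=0$, so $\phi=0$ there because $m\ge1$. One should take $\rho_K(u)=0$ when $\lambda u\notin K$ for all $\lambda>0$. The paper writes $\max$ over $\lambda>0$, so I would adopt the convention $\rho_K=0$ on those directions, consistent with the cited source.
\end{itemize}
Hence both integrals agree. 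Lemma 2.2, \eqref{equation: 2.2}--\eqref{equation: 2.3}, tells us which directions are seen by $\bm{\alpha}_K^*$, and it ensures that on $\mathrm{int}N(K,o)^*$ the point $\rho_K(u)u$ is a genuine boundary point with outer normals. This is needed so that $\alpha_K$ is defined there, apart from $\omega_K$.

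\textbf{Step 3 (general $g$).} By linearity the identity extends to nonnegative simple Borel functions. By monotone convergence it then extends to all Borel $g\ge0$. Measurability of $u\mapsto g(\alpha_K(u))$ on $\mathrm{int}N(K,o)^*\setminus\omega_K$ follows because $\alpha_K$ is continuous there, which is a standard fact \cite{MR3155183}.

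\textbf{Main obstacle.} The delicate point is the boundary case $o\in\partial K$ in Step 2. There one must check that directions in $\partial N(K,o)^*$, and directions $u$ with $\rho_K(u)=0$ whose reverse radial Gauss image may contain many normals (e.g. the normals in $N(K,o)$), contribute nothing. I would handle this through the vanishing of $\rho_K^m$ together with the null measure of $\partial N(K,o)^*\cap S^{n-1}$. I would also check that $\mathcal{R}_m^*(|K\cap\cdot|^{n-1})$ is bounded, which holds since $|K\cap\xi|\le\omega_m\,\mathrm{diam}(K)^m$, so no integrability issue arises.
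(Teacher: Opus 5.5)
The paper gives no proof of this lemma; it quotes it from Lin--Wu \cite{lin2026lpminkowskiproblemsaffine}. Your route is the standard one, as for dual curvature measures in \cite{Boeroeczky2019a}: treat indicator functions, discard the directions with $\rho_K=0$ and the null set $S^{n-1}\cap\partial N(K,o)^*$, then pass to simple functions and use monotone convergence. It works after one correction.

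The assertion ``$\mathcal{H}^{n-1}(\omega_K)=0$'' at the start of Step 2 is false in exactly the case you call delicate. If $\rho_K(u)=0$, then $\bm{\alpha}_K(\{u\})=\{v\in S^{n-1}: h_K(v)=0\}=N(K,o)\cap S^{n-1}$. So when $o$ is a singular point of $\partial K$ (for instance a vertex of a polytope), every such $u$ lies in $\omega_K$. For any $v\in N(K,o)\cap S^{n-1}$ these directions include the whole open hemisphere $\{w\in S^{n-1}: w\cdot v>0\}$. The paper's remark that this is ``well known'' is only valid without this degeneracy, e.g.\ for $o\in\mathrm{int}K$.

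What your argument actually uses is the restricted statement $\mathcal{H}^{n-1}\bigl(\omega_K\cap\mathrm{int}N(K,o)^*\bigr)=0$, and this needs its own short justification:
\begin{itemize}
\item For $u\in S^{n-1}\cap\mathrm{int}N(K,o)^*$ one has $\rho_K(u)>0$.
\item Hence $\omega_K\cap\mathrm{int}N(K,o)^*$ is the image, under the radial projection $x\mapsto x/\Vert x\Vert$, of a set of singular points of $\partial K\setminus\{o\}$.
\item The singular points of $\partial K$ form an $\mathcal{H}^{n-1}$-null set.
\item The radial projection is Lipschitz on each set $\{\Vert x\Vert\ge 1/k\}$, so it maps $\mathcal{H}^{n-1}$-null sets to $\mathcal{H}^{n-1}$-null sets.
\end{itemize}

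With this in place of the global claim, the rest of Step 2 and all of Step 3 go through as you wrote them. The appeal to Lemma 2.2 is then unnecessary: $\phi=0$ off $N(K,o)^*$, together with the null boundary $S^{n-1}\cap\partial N(K,o)^*$, already disposes of every direction outside $\mathrm{int}N(K,o)^*$.
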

\subsection{The variation formula on $\widetilde{\Psi}_m$}

Let $h_0\in C^+(S^{n-1})$ and $f\in C(S^{n-1})$. For sufficiently small $\delta>0$ and $t\in(-\delta,\delta)$, let
\begin{equation*}\label{eqaution: 7.5}
    h_{f,t}(v)=h_0(v)e^{tf(v)+o(t,v)},
\end{equation*}
where $\lim_{t\to0}o(t,\cdot)/t=0$ uniformly on $S^{n-1}$. The \textit{family of Wulff shapes $K_{h_0,f,t}$} associated with $(h_0,f)$ is defined by 
\begin{equation}\label{equation: 2.4}
    K_{h_0,f,t}=\{x\in\mathbb{R}^n:x\cdot v\leq h_{f,t}\text{ for all }v\in S^{n-1}\},\quad t\in(-\delta,\delta),
\end{equation}
and abbreviate $K_{h_0,f,0}$ by $K_{h_0}$.
\begin{lemma}\cite[Theorem 3.1]{MR4874854}\label{lemma: 7.2}
    Let $m\in\{1,\cdots,n-1\}$, and let $K_{h_0,f,t}$ be the family of Wullf shapes defined by (\ref{equation: 2.4}). Then
    \begin{equation*}
        \lim_{t\to0}\frac{\widetilde{\Psi}_m(K_{h_0,f,t})-\widetilde{\Psi}_m(K_{h_0})}{t}=n\int_{S^{n-1}}f(u)d\widetilde{C}_m^a(K_{h_0},u).
    \end{equation*}
\end{lemma}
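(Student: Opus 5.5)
The idea is to write $\widetilde{\Psi}_m$ through radial functions, differentiate under the integral signs, and then move the Radon transform onto its dual.

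\textbf{Radial expression.} Since $h_0\in C^+(S^{n-1})$, the body $K_{h_0}$ contains the origin in its interior. For $|t|$ small the same holds for $K_t:=K_{h_0,f,t}$. Polar coordinates inside each subspace $\xi\in G(n,m)$ give
\begin{equation*}
|K_t\cap\xi|=\frac1m\int_{S^{n-1}\cap\xi}\rho_{K_t}^m(w)\,dw=\frac1m\mathcal{R}_m(\rho_{K_t}^m)(\xi),
\end{equation*}
so that
\begin{equation*}
\widetilde{\Psi}_m(K_t)=\int_{G(n,m)}\Big(\tfrac1m\mathcal{R}_m(\rho_{K_t}^m)(\xi)\Big)^n d\nu_m(\xi).
\end{equation*}

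\textbf{Radial derivative.} I will use the Huang--Lutwak--Yang--Zhang lemma on Wulff shapes (Lemma 4.1 of \cite{MR3573332}). It gives, for every $u\in S^{n-1}\setminus\omega_{K_{h_0}}$,
\begin{equation*}
\lim_{t\to0}\frac{\log\rho_{K_t}(u)-\log\rho_{K_{h_0}}(u)}{t}=f(\alpha_{K_{h_0}}(u)).
\end{equation*}
It also gives a uniform bound $|\log\rho_{K_t}-\log\rho_{K_{h_0}}|\le M|t|$ on $S^{n-1}$. Because $\rho_{K_t}$ is bounded above and below uniformly in $t$, this yields $|\rho_{K_t}^m-\rho_{K_{h_0}}^m|/|t|\le C$ uniformly.

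\textbf{Differentiating the sections.} Fix $\xi\in G(n,m)$. The set $\omega_{K_{h_0}}$ is $\mathcal{H}^{n-1}$-null, so by the integral-geometric Fubini formula $\mathcal{H}^{m-1}(\omega_{K_{h_0}}\cap\xi)=0$ for $\nu_m$-a.e. $\xi$. For every such $\xi$, dominated convergence gives
\begin{equation*}
\frac{d}{dt}\Big|_{t=0}|K_t\cap\xi|^n=n|K_{h_0}\cap\xi|^{n-1}\int_{S^{n-1}\cap\xi}f(\alpha_{K_{h_0}}(w))\,\rho_{K_{h_0}}^m(w)\,dw.
\end{equation*}
The difference quotients of $|K_t\cap\xi|^n$ are uniformly bounded in $\xi$ and $t$. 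A second application of dominated convergence over $G(n,m)$ therefore gives
\begin{equation*}
\lim_{t\to0}\frac{\widetilde{\Psi}_m(K_t)-\widetilde{\Psi}_m(K_{h_0})}{t}=n\int_{G(n,m)}|K_{h_0}\cap\xi|^{n-1}\,\mathcal{R}_m\big((f\circ\alpha_{K_{h_0}})\rho_{K_{h_0}}^m\big)(\xi)\,d\nu_m(\xi).
\end{equation*}

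\textbf{Duality.} Next I use the duality between $\mathcal{R}_m$ and $\mathcal{R}_m^*$:
\begin{equation*}
\int_{G(n,m)}F\,\mathcal{R}_m g\,d\nu_m=\int_{S^{n-1}}g\,\mathcal{R}_m^*F\,du .
\end{equation*}
The normalization $\frac{m\omega_m}{n\omega_n}$ in (\ref{equation: 2.1}) is exactly the constant that makes this identity hold with $\nu_m$ and $\nu_{m-1}$ both probability measures. It extends from continuous $g$ to bounded Borel $g$ by monotone class arguments. Applying it with $F=|K_{h_0}\cap\cdot|^{n-1}$ turns the limit into
\begin{equation*}
n\int_{S^{n-1}}f(\alpha_{K_{h_0}}(u))\,\rho_{K_{h_0}}^m(u)\,\mathcal{R}_m^*(|K_{h_0}\cap\cdot|^{n-1})(u)\,du .
\end{equation*}

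\textbf{Identification with the measure.} The origin is interior, so $N(K_{h_0},o)=\{o\}$ and $\mathrm{int}N(K_{h_0},o)^*=\mathbb{R}^n$. Lemma \ref{lemma: 7.1}, applied separately to $f_+$ and $f_-$, identifies the last integral with $n\int_{S^{n-1}} f\,d\widetilde{C}_m^a(K_{h_0},\cdot)$, which is the claimed formula.

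\textbf{Main obstacle.} The delicate point is justifying the two limit interchanges. The radial derivative exists only off the null set $\omega_{K_{h_0}}$, and nullity of its traces $\omega_{K_{h_0}}\cap\xi$ in almost every $m$-dimensional section must be checked. I would handle this with the Blaschke--Petkantschin/Fubini decomposition of $du$ over $G(n,m)$, which is the same identity that underlies the duality formula. The uniform Lipschitz bound from the Huang--Lutwak--Yang--Zhang lemma then supplies the domination in both steps.
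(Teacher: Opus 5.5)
The paper does not prove this lemma; it imports it from Cai--Leng--Wu--Xi (Theorem 3.1). Your argument is correct and follows essentially the standard route behind that variational formula: write $\widetilde{\Psi}_m(K_t)$ via $\frac1m\mathcal{R}_m(\rho_{K_t}^m)$ on sections, differentiate using the Huang--Lutwak--Yang--Zhang Wulff-shape lemma with dominated convergence, pass to the sphere by $\mathcal{R}_m$--$\mathcal{R}_m^*$ duality, and identify the result with $n\int f\,d\widetilde{C}_m^a(K_{h_0},\cdot)$ through the radial Gauss map (your a.e.-section nullity of $\omega_{K_{h_0}}$ correctly handles the exceptional set).
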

\subsection{The continuity of affine dual curvature measures}
In this section, we state the fact that the affine dual quermassintegral $\widetilde{\Psi}_m$ is a continuous function of $K\in\mathcal{K}_o^n$ with respect to the Hausdorff distance. Moreover, using the continuity of $\widetilde{\Psi}_m$, we state that the affine dual curvature measure $\widetilde{C}_m^a(K,\cdot)$ is weakly convergent on $K\in\mathcal{K}_o^n$.

The following three lemmas are due to Lin et al.\cite{lin2026lpminkowskiproblemsaffine}.
\begin{lemma}\cite[Lemma 3.7]{lin2026lpminkowskiproblemsaffine}\label{lemma: 7.3}
    If $K\in\mathcal{K}_o^n$, then
    \begin{equation}\label{equation: 7.5}
        \widetilde{\Psi}_m(K)=\frac{1}{m}\int_{S^{n-1}}\rho_K(u)^m\mathcal{R}_m^*(|K\cap\cdot|^{n-1})(u)du.
    \end{equation}
\end{lemma}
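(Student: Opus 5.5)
We prove (\ref{equation: 7.5}) by rewriting $\widetilde{\Psi}_m(K)$ in polar coordinates inside each $m$-dimensional section and then exchanging the order of integration over $G(n,m)$ with the integration over $S^{n-1}$.

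\textbf{Degenerate case.} If $\dim K\le n-1$, then $K$ lies in a hyperplane $H$. For $\nu_m$-almost every $\xi\in G(n,m)$ we have $\dim(K\cap\xi)<m$, so $|K\cap\xi|=0$ and the left side vanishes. On the right, the same fact shows that $\mathcal{R}_m^*(|K\cap\cdot|^{n-1})(u)$ vanishes. Indeed, for every $u$ and $\nu_{m-1}$-a.e.\ $\zeta$ the subspace $\mathrm{span}\{u,\zeta\}$ is not contained in $H$, so its intersection with $K$ has dimension below $m$. Hence both sides are zero, with the convention $\rho_K=0$ off $K$'s radial support. From now on we assume $\mathrm{int}K\neq\emptyset$. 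Then $\rho_K$ is bounded and measurable on $S^{n-1}$; it may vanish on a set, since $o$ is allowed to lie on $\partial K$.

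\textbf{Polar coordinates in each section.} For $\xi\in G(n,m)$ we write
\begin{equation*}
|K\cap\xi|=\frac1m\int_{S^{n-1}\cap\xi}\rho_K(w)^m\,dw .
\end{equation*}
Multiplying by $|K\cap\xi|^{n-1}$ and integrating over $\xi$ gives
\begin{equation*}
\widetilde{\Psi}_m(K)=\frac1m\int_{G(n,m)}\int_{S^{n-1}\cap\xi}\rho_K(w)^m\,|K\cap\xi|^{n-1}\,dw\,d\nu_m(\xi).
\end{equation*}

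\textbf{Exchanging the order of integration.} The key ingredient is the standard duality between $\mathcal{R}_m$ and $\mathcal{R}_m^*$. For nonnegative measurable $f$ on $S^{n-1}$ and $F$ on $G(n,m)$,
\begin{equation*}
\int_{G(n,m)}\mathcal{R}_mf(\xi)F(\xi)\,d\nu_m(\xi)=\int_{S^{n-1}}f(u)\,\mathcal{R}_m^*F(u)\,du.
\end{equation*}
To justify this, we consider the flag space of pairs $(u,\xi)$ with $u\in S^{n-1}\cap\xi$. Its rotation-invariant measure can be disintegrated in two ways:
\begin{itemize}
\item first choose $\xi$ by $\nu_m$, then $u$ by the surface measure on $S^{m-1}\subset\xi$, of total mass $m\omega_m$;
\item first choose $u$ by the surface measure on $S^{n-1}$, of total mass $n\omega_n$, then $\xi=\mathrm{span}\{u,\zeta\}$ with $\zeta$ distributed by $\nu_{m-1}$ on $G_u(n-1,m-1)$.
\end{itemize}
By uniqueness of invariant probability measures, both disintegrations give the same measure after normalization. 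Checking the normalization on $f\equiv1$, $F\equiv1$ yields exactly the factor $\frac{m\omega_m}{n\omega_n}$ in (\ref{equation: 2.1}). Once this holds for continuous $f$ and $F$, monotone convergence extends it to nonnegative bounded Borel functions.

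\textbf{Conclusion and main obstacle.} We apply the duality with $f=\rho_K^m$ and $F=|K\cap\cdot|^{n-1}$; Tonelli's theorem justifies the exchange because everything is nonnegative. This gives (\ref{equation: 7.5}). The step most likely to cause trouble is measurability and boundedness of $\xi\mapsto|K\cap\xi|$ when $o\in\partial K$. We handle it as follows:
\begin{itemize}
\item $\rho_K$ is upper semicontinuous and bounded, hence Borel.
\item $|K\cap\xi|$ is an integral of a Borel function along sections, so it is Borel and bounded by $\omega_m\,\mathrm{diam}(K)^m$.
\end{itemize}
Alternatively, we can approximate $K$ by the bodies $K+\varepsilon B$ translated so that they contain $o$ in the interior. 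On these bodies all functions are continuous and the identity holds directly. We then pass to the limit using monotone convergence, or the continuity of sections away from a $\nu_m$-null set.
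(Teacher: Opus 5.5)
The paper gives no proof of this identity; it imports it from Lin--Wu (their Lemma 3.7), so there is no internal argument to compare against. Your proof is correct and follows the standard derivation. In each section $|K\cap\xi|=\frac1m\mathcal{R}_m(\rho_K^m)(\xi)$, so $\widetilde{\Psi}_m(K)=\frac1m\int_{G(n,m)}\mathcal{R}_m(\rho_K^m)\,|K\cap\cdot|^{n-1}\,d\nu_m$. The duality between $\mathcal{R}_m$ and $\mathcal{R}_m^*$, with the constant $\frac{m\omega_m}{n\omega_n}$ checked on constants, then gives the result.

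What your write-up adds over the bare citation is that it handles the one delicate point. The paper defines $\mathcal{R}_m^*$ only on $C(G(n,m))$. But when $o\in\partial K$, the map $\xi\mapsto|K\cap\xi|$ is merely upper semicontinuous. For example, take $K=[0,1]\times[-1,1]^{n-1}$ and $\xi\subset e_1^{\perp}$: any small tilt of $\xi$ out of $e_1^{\perp}$ roughly halves the section. So the duality must hold beyond continuous functions. Your flag-manifold argument (two $O(n)$-invariant measures on a homogeneous space of a compact group are proportional), combined with Tonelli, gives it for all nonnegative Borel $f,F$ at once. Your separate remark that ``monotone convergence'' extends the duality from continuous to Borel functions is too quick in general. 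It is harmless here, because $\rho_K^m$ and $|K\cap\cdot|^{n-1}$ are bounded and upper semicontinuous, hence decreasing limits of continuous functions.

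There are two minor slips.

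\emph{The degenerate case with $m=1$.} Here $G_u(n-1,0)$ is a single point and $\mathrm{span}\{u\}\subset H$ for every $u\in H$, so your ``for every $u$'' claim fails. It fails only on the $\mathcal{H}^{n-1}$-null set $S^{n-1}\cap H$, so the conclusion survives. In fact the degenerate case need not be treated separately, since the polar formula and the duality apply verbatim to lower-dimensional $K\in\mathcal{K}_o^n$.

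\emph{The alternative approximation.} $K+\varepsilon B$ already contains $o$ in its interior because $o\in K$, so no translation is needed. Also, $\rho_{K+\varepsilon B}\downarrow\rho_K$ and $|(K+\varepsilon B)\cap\xi|\downarrow|K\cap\xi|$ are decreasing limits, so the relevant tool is dominated convergence, not monotone convergence.
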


\begin{lemma}\cite[Lemma 3.8]{lin2026lpminkowskiproblemsaffine}\label{lemma: 7.4}
    For $m\in\{1,2,\cdots,n-1\}$, $\widetilde{\Psi}_m(K)$ is a continuous function of $K\in\mathcal{K}_o^n$ with respect to the Hausdorff distance.
\end{lemma}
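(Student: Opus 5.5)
The plan is to prove continuity directly from the definition \eqref{equation: 1.3},
\[
\widetilde{\Psi}_m(K)=\int_{G(n,m)}|K\cap\xi|^n\,d\nu_m(\xi),
\]
using dominated convergence on $G(n,m)$. Let $K_i\to K$ in $\mathcal{K}_o^n$ in the Hausdorff metric. Then all $K_i$ and $K$ lie in a common ball $RB^n$, so $|K_i\cap\xi|^n\le(\omega_mR^m)^n$ for every $\xi$; this gives the dominating function. It therefore suffices to show that $|K_i\cap\xi|\to|K\cap\xi|$ for $\nu_m$-almost every $\xi\in G(n,m)$. Here $|K\cap\xi|$ means the $m$-dimensional measure inside $\xi$, which is $0$ if $K\cap\xi$ is lower dimensional.

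\textbf{Upper bound.} For every $\xi$, I first show $\limsup_i|K_i\cap\xi|\le|K\cap\xi|$. Every limit point of a sequence $x_i\in K_i\cap\xi$ lies in $K\cap\xi$, since both $K$ and $\xi$ are closed. Hence for any $\varepsilon>0$, eventually $K_i\cap\xi$ lies in the $\varepsilon$-neighbourhood of $K\cap\xi$ inside $\xi$; otherwise compactness would produce a limit point outside $K\cap\xi$. Letting $\varepsilon\to0$, the $m$-dimensional measure of these neighbourhoods decreases to $|K\cap\xi|$, which gives the claim. This handles in particular the case $\dim K\le n-1$ for a.e.\ $\xi$, once we know $|K\cap\xi|=0$ there.

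\textbf{Lower bound.} If $\xi\cap\mathrm{int}K\neq\emptyset$, then $K\cap\xi$ is the closure of the relatively open convex set $\mathrm{int}K\cap\xi$. Any compact $C\subset\mathrm{int}K\cap\xi$ is a compact subset of $\mathrm{int}K$, so $C\subset K_i$ for large $i$ by Hausdorff convergence of convex bodies. Exhausting $\mathrm{int}K\cap\xi$ by such compacta gives $\liminf_i|K_i\cap\xi|\ge|K\cap\xi|$.

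\textbf{Exceptional set (main obstacle).} It remains to show that the set $E$ of $\xi$ with $\xi\cap\mathrm{int}K=\emptyset$ but $|K\cap\xi|>0$ is $\nu_m$-null. On the complement of $E$, either the lower bound above applies, or $|K\cap\xi|=0$ and the upper bound alone forces convergence to $0$. I would argue via the cone $C=\mathrm{pos}\,K=N(K,o)^*$:

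\begin{itemize}
\item If $\xi\cap\mathrm{int}K=\emptyset$, then $\xi\cap\mathrm{int}\,C=\emptyset$. Indeed, a point of $\xi\cap\mathrm{int}\,C$ has a small multiple in $\mathrm{int}K$, because $K$ is convex with $o\in K$ and $C$ is generated by $K$.
\item If moreover $|K\cap\xi|>0$, then $S^{n-1}\cap\xi\cap C$ has positive $\mathcal{H}^{m-1}$-measure. By the previous point this set lies in $S^{n-1}\cap\xi\cap\partial C$, so $\mathcal{H}^{m-1}(S^{n-1}\cap\xi\cap\partial C)>0$ for every $\xi\in E$.
\item Since $C$ is a convex cone, $\mathcal{H}^{n-1}(S^{n-1}\cap\partial C)=0$.
\item By the Crofton-type identity $\int_{G(n,m)}\mathcal{H}^{m-1}(A\cap\xi)\,d\nu_m(\xi)=c_{n,m}\mathcal{H}^{n-1}(A)$ for Borel $A\subset S^{n-1}$, which is just the Radon transform $\mathcal{R}_m$ applied to $1_A$ and integrated, we get $\mathcal{H}^{m-1}(S^{n-1}\cap\xi\cap\partial C)=0$ for a.e.\ $\xi$.
\end{itemize}
Hence $\nu_m(E)=0$. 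This also covers $\dim K<n$: then $C$ lies in a hyperplane, so a.e.\ $\xi$ meets it in a set of zero $\mathcal{H}^{m-1}$-measure on the sphere, giving $|K\cap\xi|=0$ a.e. Combining the three steps, pointwise a.e.\ convergence holds, and dominated convergence finishes the proof. I expect the main care to lie in this null-set step, specifically in checking the inclusion $S^{n-1}\cap\xi\cap C\subset\partial C$ when $o\in\partial K$.
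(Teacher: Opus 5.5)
Your proof is correct. The paper gives no argument of its own here; it quotes the lemma from Lin and Wu (Lemma 3.8). So there is nothing in the text to match, and what you give is a complete, self-contained proof.

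The step you flagged does go through, including when $o\in\partial K$. If $\mathrm{int}K\neq\emptyset$, then $\mathrm{pos}(\mathrm{int}K)=\{\lambda x:\lambda>0,\ x\in\mathrm{int}K\}$ is an open convex cone dense in $\mathrm{pos}K$. Hence it equals $\mathrm{int}\,\mathrm{cl}(\mathrm{pos}K)=\mathrm{int}N(K,o)^*$. Also $tx\in\mathrm{int}K$ for $x\in\mathrm{int}K$ and $t\in(0,1]$, because $o\in K$.

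Two details should be made explicit:
\begin{itemize}
\item Take $C=N(K,o)^*$, the \emph{closure} of $\mathrm{pos}K$. The latter need not be closed when $o\in\partial K$ (a disc with $o$ on its boundary generates an open half-plane plus $\{o\}$). With the closed cone, $S^{n-1}\cap\partial C$ is closed and the Crofton identity applies directly.
\item In the lower bound, passing from $|\mathrm{int}K\cap\xi|$ to $|K\cap\xi|$ uses that the relative boundary of the $m$-dimensional convex set $K\cap\xi$ is $\mathcal{H}^m$-null.
\end{itemize}

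A natural alternative closer to the paper's machinery (Lemma 7.3, and the set $\partial N(K,o)^*$ off which the paper notes $\rho_K$ is continuous) is radial. Write $|K\cap\xi|=\frac1m\int_{S^{n-1}\cap\xi}\rho_K^m\,d\mathcal{H}^{m-1}$, and use that $\rho_{K_i}\to\rho_K$ pointwise on $S^{n-1}\setminus\partial N(K,o)^*$. Your Crofton step then says this convergence holds $\mathcal{H}^{m-1}$-a.e.\ on $\xi\cap S^{n-1}$ for $\nu_m$-a.e.\ $\xi$, and two applications of dominated convergence finish. That version merges your upper and lower bounds into one pointwise statement. Yours avoids radial functions and gets the upper bound for every $\xi$. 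The exceptional set and the key integral-geometric null-set argument are the same in both.
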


\begin{lemma}\cite[Proposition 3.9]{lin2026lpminkowskiproblemsaffine}\label{lemma: 7.5}
    If $m\in\{1,\cdots,n-1\}$, and $\{K_j\},j\in\mathbb{N},$ converges to $K$ for $K_j,K\in\mathcal{K}_o^n$, then $\widetilde{C}_m^a(K_j,\cdot)$ converges weakly to $\widetilde{C}_m^a(K,\cdot)$.
\end{lemma}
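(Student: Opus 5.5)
The plan is to test weak convergence against an arbitrary $g\in C(S^{n-1})$; I would show
\begin{equation*}
\int_{S^{n-1}}g\,d\widetilde{C}_m^a(K_j,\cdot)\longrightarrow\int_{S^{n-1}}g\,d\widetilde{C}_m^a(K,\cdot).
\end{equation*}
Since $K_j\to K$, there is $R>0$ with $K_j,K\subset RB^n$ for all $j$. Splitting $g=g^+-g^-$, we may assume $g\geq0$, so that Lemma \ref{lemma: 7.1} applies. I would split into two cases according to $\dim K$.

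\emph{Case $\dim K\leq n-1$.} By (\ref{equation:7.1}) the limit measure is zero, so it suffices to show that the total masses $|\widetilde{C}_m^a(K_j,\cdot)|$ tend to $0$, since $|\int g\,d\widetilde{C}_m^a(K_j,\cdot)|\le\|g\|_\infty|\widetilde{C}_m^a(K_j,\cdot)|$. For $K_j$ with nonempty interior, take $g\equiv1$ in Lemma \ref{lemma: 7.1}. Outside $N(K_j,o)^*$ we have $\rho_{K_j}=0$, and $\partial N(K_j,o)^*\cap S^{n-1}$ is $\mathcal{H}^{n-1}$-null because it is the boundary of a convex cone. Comparing with Lemma \ref{lemma: 7.3} then gives $|\widetilde{C}_m^a(K_j,\cdot)|=m\widetilde{\Psi}_m(K_j)$; this identity holds trivially when $\dim K_j\le n-1$. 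By Lemma \ref{lemma: 7.4}, $\widetilde{\Psi}_m(K_j)\to\widetilde{\Psi}_m(K)$. Finally $\widetilde{\Psi}_m(K)=0$, since for $\nu_m$-a.e.\ $\xi$ the set $K\cap\xi$ has dimension $<m$.

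\emph{Case $\mathrm{int}K\neq\emptyset$.} I may assume every $K_j$ has interior. I would write both sides via Lemma \ref{lemma: 7.1} as integrals over $S^{n-1}$ of
\begin{equation*}
F_j(u)=\mathbf 1_{\mathrm{int}N(K_j,o)^*}(u)\,g(\alpha_{K_j}(u))\,\rho_{K_j}^m(u)\,\mathcal{R}_m^*(|K_j\cap\cdot|^{n-1})(u),
\end{equation*}
and the analogous $F$ for $K$. Then I apply dominated convergence with the constant bound $\|g\|_\infty R^m(\omega_mR^m)^{n-1}\frac{m\omega_m}{n\omega_n}$. It remains to check $F_j\to F$ a.e., in three parts.

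(i) If $u\notin N(K,o)^*$, then $\rho_K(u)=0$. Upper semicontinuity of the radial function under Hausdorff convergence gives $\limsup\rho_{K_j}(u)\le\rho_K(u)=0$: if $cu\in K_j$ along a subsequence, then $cu\in K$. Hence $F_j(u)\to0=F(u)$.

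(ii) If $u\in\mathrm{int}N(K,o)^*$, then $\rho_K(u)>0$, and I would show $\rho_{K_j}(u)\to\rho_K(u)$. The $\limsup$ bound is as in (i). For the $\liminf$, a small cone around the segment $[o,\rho_K(u)u)$ built from interior points of $K$ is eventually contained in $K_j$. The same argument shows that eventually $u\in\mathrm{int}N(K_j,o)^*$, so the indicators converge.

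(iii) For such $u$ with additionally $u\notin\omega_K$, which excludes only an $\mathcal{H}^{n-1}$-null set, I need $\alpha_{K_j}(u)\to\alpha_K(u)$. Take any limit point $v$ of $\alpha_{K_j}(u)$. Passing to the limit in $\rho_{K_j}(u)u\cdot\alpha_{K_j}(u)=h_{K_j}(\alpha_{K_j}(u))$, using (ii) and uniform convergence of support functions, gives $\rho_K(u)u\cdot v=h_K(v)$. So $v\in\bm{\alpha}_K(\{u\})=\{\alpha_K(u)\}$. I also need $\alpha_{K_j}(u)$ to be defined, which holds off the null set $\bigcup_j\omega_{K_j}$.

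For the dual Radon factor, I use that $\mathrm{vol}_m(K_j\cap\xi)\to\mathrm{vol}_m(K\cap\xi)$ for each $m$-subspace $\xi$ meeting $\mathrm{int}K$. Every $\xi\ni u$ with $u\in\mathrm{int}N(K,o)^*$ qualifies, because $\xi$ contains interior points of $K$ near $t u$ for small $t>0$ and contains the whole ray through $u$. Then bounded convergence on $G_u(n-1,m-1)$, with volumes bounded by $\omega_mR^m$, gives convergence of $\mathcal{R}_m^*(|K_j\cap\cdot|^{n-1})(u)$. Continuity of $g$ finishes the pointwise convergence. The remaining set $\partial N(K,o)^*\cap S^{n-1}$ is null.

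The step I expect to be the main obstacle is (ii)–(iii) when $o\in\partial K$. Here the sets $N(K_j,o)^*$ vary with $j$, and the radial function is discontinuous on $\partial N(K,o)^*$. The argument must carefully confine all bad directions to the null sets $\partial N(K,o)^*$, $\omega_K$ and $\bigcup_j\omega_{K_j}$, and must justify the lower semicontinuity of $\rho_{K_j}(u)$ for interior directions.
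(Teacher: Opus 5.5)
Your argument is correct. The paper gives no proof of its own: it only cites Lin--Wu and remarks that the result rests on the continuity of $\widetilde{\Psi}_m$. Your route matches that remark, namely total mass $m\widetilde{\Psi}_m(K_j)\to m\widetilde{\Psi}_m(K)=0$ via Lemmas \ref{lemma: 7.3} and \ref{lemma: 7.4} when $\dim K\le n-1$, and Lemma \ref{lemma: 7.1} with dominated convergence when $\mathrm{int}K\neq\emptyset$.

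One small correction. If $o$ is a non-smooth boundary point of $K_j$, then $\bm{\alpha}_{K_j}(\{u\})=N(K_j,o)\cap S^{n-1}$ for every $u\notin N(K_j,o)^*$, so $\omega_{K_j}$ has positive measure. (The paper's blanket claim $\mathcal{H}^{n-1}(\omega_K)=0$ is only valid for $o\in\mathrm{int}K$.) Since $F_j$ vanishes off $\mathrm{int}N(K_j,o)^*$, you only need the null sets $\omega_{K_j}\cap\mathrm{int}N(K_j,o)^*$, and with that exceptional set your proof goes through unchanged.
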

\section{An extreme problem}
Let $\gamma_1,\cdots,\gamma_N\in\mathbb{R}^+$, $u_1,\cdots,u_N$ are unit vectors in $\mathbb{R}^n$ in general position and $P\in\mathcal{P}(u_1,\cdots,u_N)$. Define $\Phi_p:\text{int}(P')\to\mathbb{R}$ by
\begin{equation*}
    \Phi_P(\xi)=\sum_{k=1}^N\frac{1}{2}\gamma_k\log(h(P,u_k)^2-(\xi\cdot u_k)^2),
\end{equation*}
where $P'=\{\xi\in P:h(P,u_k)^2-(\xi\cdot u_k)^2>0,k=1,\cdots,N\}$. Then the origin is contained in $P'$ and $P'$ has nonempty interior. Obviously, the unique point $\xi(P)\in\text{int}(P)$ such that 
\begin{equation*}
    \Phi_P(\xi(P))=\max_{\xi\in\text{Int}(P')}\Phi_P(\xi)
\end{equation*}
is $o$.

If $P_i\in\mathcal{P}(u_1,\cdots,u_N)$ and $P_i$ converges to a polytope $P$, then $P\in\mathcal{P}(u_1,\cdots,u_N)$. If the unit vectors $u_1,\cdots,u_N$ are in general position, $P_i\in\mathcal{P}(u_1,\cdots,u_N)$ and $P_i$ converges to a polytope $P$, then 
\begin{equation}\label{eq:3.3}
    \lim_{i\to\infty}\Phi_{P_i}(\xi(P_i))=\Phi_P(\xi(P)).
\end{equation}

\begin{lemma}\label{le:3.1}
    If $\gamma_1,\cdots,\gamma_N\in\mathbb{R}^+$, the unit vectors $u_1,\cdots,u_N$ are in general position and there exists a $P\in\mathcal{P}_N(u_1,\cdots,u_N)$ with $\widetilde{\Psi}_m(P)=\sum_{i=1}^N\gamma_i$ such that 
    $\Phi_P(o)=\inf\Bigl\{\Phi_Q(o):Q\in\mathcal{P}_N(u_1,\cdots,u_N)\enspace\text{and}\enspace\widetilde{\Psi}_m(Q)=\sum_{k=1}^N\gamma_k\Big\}$.
    Then,
    \begin{equation*}
        \widetilde{C}_m^a(P,\cdot)=m\sum_{k=1}^N\gamma_k\delta_{u_k}.
    \end{equation*}
\end{lemma}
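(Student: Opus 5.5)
Since $P$ minimizes $\Phi_Q(o)$ over $Q\in\mathcal{P}_N(u_1,\dots,u_N)$ subject to $\widetilde{\Psi}_m(Q)=\sum_k\gamma_k$, I would treat this as a constrained minimization in the support numbers and use a Lagrange multiplier argument. Note that $\Phi_Q(o)=\sum_k\gamma_k\log h(Q,u_k)$. Write $h_k=h(P,u_k)>0$; these are positive because $o\in\operatorname{int}P'$ forces $h_k>0$. Fix arbitrary $\beta_1,\dots,\beta_N\in\mathbb{R}$ and pick $f\in C(S^{n-1})$ with $f(u_k)=\beta_k$.

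For small $|t|$, consider the polytopes
\[
P_t=\bigcap_{k=1}^N H^-(u_k,h_ke^{t\beta_k}).
\]
Because $P$ has exactly $N$ facets and the $u_k$ are in general position, $P_t$ still has exactly $N$ facets for small $|t|$, so $P_t\in\mathcal{P}_N(u_1,\dots,u_N)$ and $h(P_t,u_k)=h_ke^{t\beta_k}$. Moreover $P_t$ is exactly the Wulff shape $K_{h_0,f,t}$ of (\ref{equation: 2.4}) with $h_0=h_P$, since constraints in directions other than the $u_k$ are redundant. Lemma \ref{lemma: 7.2} then gives
\[
\frac{d}{dt}\Big|_{t=0}\widetilde{\Psi}_m(P_t)=n\int_{S^{n-1}} f\,d\widetilde{C}_m^a(P,\cdot)=n\sum_{k=1}^N\beta_k\,\widetilde{C}_m^a(P,\{u_k\}).
\]
The last equality holds because, by Lemma \ref{lemma: 7.1}, $\widetilde{C}_m^a(P,\cdot)$ is concentrated on the facet normals $u_k$: the radial Gauss map sends almost every $u$ to some $u_k$.

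To restore the constraint, I would rescale using homogeneity: $\widetilde{\Psi}_m(\lambda K)=\lambda^{mn}\widetilde{\Psi}_m(K)$. Set
\[
\lambda(t)=\Big(\sum_k\gamma_k\big/\widetilde{\Psi}_m(P_t)\Big)^{1/(mn)},
\]
so that $\lambda(t)P_t$ is admissible. Then
\[
g(t)=\Phi_{\lambda(t)P_t}(o)=\sum_k\gamma_k\bigl(\log h_k+t\beta_k\bigr)+|\gamma|\log\lambda(t),
\]
where $|\gamma|=\sum_k\gamma_k$. This function has a minimum at $t=0$ and is differentiable there, by Lemma \ref{lemma: 7.2} and the fact that $\widetilde{\Psi}_m(P)=|\gamma|>0$. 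Setting $g'(0)=0$ gives
\[
\sum_k\gamma_k\beta_k-\frac{1}{m}\sum_k\beta_k\,\widetilde{C}_m^a(P,\{u_k\})=0 .
\]

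Since the $\beta_k$ are arbitrary, this yields $\widetilde{C}_m^a(P,\{u_k\})=m\gamma_k$ for every $k$. Together with the concentration on $\{u_k\}$, this gives the claim.

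The main obstacle is the justification step: verifying that $P_t$ lies in $\mathcal{P}_N$ and coincides with the Wulff shape, so that Lemma \ref{lemma: 7.2} applies. This uses that facets persist under small perturbations of the support numbers. I would argue it from the fact that each facet of $P$ has positive $(n-1)$-dimensional measure, together with continuity of the vertices in the $h_k$.
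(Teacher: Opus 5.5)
Your argument follows the paper's proof: you perturb the support numbers exponentially (your $\beta_k$ is the paper's $\delta_k/h(P,u_k)$), rescale using the $mn$-homogeneity of $\widetilde{\Psi}_m$, and set $g'(0)=0$. Your computation of $g'(0)$ is correct. The gap is in the step you single out. It is not facet persistence, which is fine: a relative interior point of each facet satisfies the other $N-1$ constraints strictly. The problem is the claim that $P_t=K_{h_P,f,t}$ for a continuous $f$ with $f(u_k)=\beta_k$, which is false in general.

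Here is a counterexample. Let $P$ be a simplex (its $n+1$ normals are in general position), let $x_0$ be the vertex on the facets with normals $u_1,\dots,u_n$, and let $v=\sum_{k=1}^n y_ku_k$ with all $y_k>0$. For small $|t|$ the vertex $x_0(t)$ of $P_t$ has the same normal cone, so $h_{P_t}(v)=\sum_{k=1}^n y_kh_ke^{t\beta_k}$. The constraint in direction $v$ is therefore redundant only if
\[
t f(v)\ \ge\ \Lambda(t):=\log\sum_{k=1}^n w_k e^{t\beta_k},\qquad w_k=\frac{y_kh_k}{h_P(v)} .
\]
If $\beta_1,\dots,\beta_n$ are not all equal, $\Lambda$ is strictly convex with $\Lambda(0)=0$, hence $\Lambda(t)>t\Lambda'(0)$ for $t\neq 0$. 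The inequality would then force $t\,(f(v)-\Lambda'(0))>0$ for small $t$ of both signs, which is impossible. So no continuous extension $f$ makes the Wulff shape equal to $P_t$ for all small $|t|$. Lemma \ref{lemma: 7.2}, as stated for $f\in C(S^{n-1})$, therefore cannot be invoked this way. The paper writes down the derivative of $\widetilde{\Psi}_m(P_t)$ without any justification, so this is exactly the point that needs an argument.

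There are two standard repairs.

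\emph{Wulff shapes over a finite set.} Use the variational formula for Wulff shapes generated by the closed set $\Omega=\{u_1,\dots,u_N\}$, with $h_0$ and $f$ defined only on $\Omega$. This is the setting of Huang--Lutwak--Yang--Zhang's variational lemmas, which give $\frac{d}{dt}\big|_{t=0}\log\rho_{K_t}(u)=f(\alpha_{K_0}(u))$ for a.e.\ $u$, together with $|\log\rho_{K_t}-\log\rho_{K_0}|\le M|t|$. From these the derivative of $\widetilde{\Psi}_m$ follows as in Lemma \ref{lemma: 7.2}, and $P_t$ is literally the Wulff shape of $h_ke^{t\beta_k}$ on $\Omega$.

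\emph{Direct differentiation.} If $\rho_P(u)u$ lies in the relative interior of $F(P,u_k)$, then $\rho_{P_t}(u)=\rho_P(u)e^{t\beta_k}$ for all sufficiently small $|t|$. For a.e.\ $\xi$, almost every $u\in S^{n-1}\cap\xi$ is of this type. Also $e^{-C|t|}P\subset P_t\subset e^{C|t|}P$ with $C=\max_k|\beta_k|$. Apply dominated convergence in
\[
\widetilde{\Psi}_m(P_t)=\int_{G(n,m)}\bigl(\tfrac1m\int_{S^{n-1}\cap\xi}\rho_{P_t}^m\,du\bigr)^n d\nu_m(\xi),
\]
then the duality between $\mathcal{R}_m$ and $\mathcal{R}_m^*$ with $F=|P\cap\cdot|^{n-1}$. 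This gives $\frac{d}{dt}\big|_{t=0}\widetilde{\Psi}_m(P_t)=n\sum_k\beta_k\widetilde{C}_m^a(P,\{u_k\})$.

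With either repair the rest of your proof stands unchanged.
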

\begin{proof}
    By $mn-$th homogeneity of $\widetilde{\Psi}_m(P)$ and $\widetilde{C}_m^a(P,\cdot)$, it is sufficient to establish the lemma under the assumption that $\sum_{k=1}^N\gamma_k=1$.

    For $\delta_1,\cdots,\delta_N\in\mathbb{R}$, choose $\vert t\vert$ small enough so that the polytope $P_t$ defined by 
    \begin{equation*}
        P_t=\mathop{\cap}\limits_{i=1}^N\{x:x\cdot u_i\leq h(P,u_i)e^{\frac{\delta_i}{h(P,u_i)}t}\}
    \end{equation*}
    has exactly $N$ facets. Then,
    \begin{equation*}
        \log h_{P_t}(u)=\log h_{P}(u)+\frac{\delta_i}{h_P}t,\quad u\in\{u_1,\cdots,u_N\} 
    \end{equation*}
    and
    \begin{equation*}
        \lim_{t\to 0}\frac{\widetilde{\Psi}_m(P_t)-\widetilde{\Psi}_m(P)}{t}=\sum_{k=1}^N\frac{n\delta_k}{h_P(u_k)}\widetilde{C}_m^a(P,u_k).
     \end{equation*}
    Let $\lambda(t)=\widetilde{\Psi}_m(P_t)^{-\frac{1}{mn}}$, then $\lambda(t)P_t\in \mathcal{P}_N(u_1,\cdots,u_N)$, $\widetilde{\Psi}_m(\lambda(t)P_t)=1$. Then
    \begin{equation}\label{eq:3.1}
        \lambda'(0)=-\frac{1}{m}\sum_{k=1}^{N}\frac{\delta_i}{h_P(u_k)}\widetilde{C}_m^a(P,u_k).
    \end{equation}
    Let $\xi(t)=\xi(\lambda(t)P_t)=o$, and $\Phi(t)=\Phi_{\lambda(t)P_t}(\xi(\lambda(t)P_t))$, then we have
    \begin{align}\label{eq:3.2}
        \Phi(t)&=\max_{\xi\in\lambda(t)P_t}\frac{1}{2}\sum_{k=1}^N\gamma_k\log(h(\lambda(t)P_t,u_k)^2-(\xi\cdot u_k)^2)\notag\\  &=\sum_{k=1}^N\gamma_k\log(\lambda(t)h(P_t,u_k)).
    \end{align}
    From the fact that $\Phi(0)$ is a minimizer of $\Phi(t)$, (\ref{eq:3.1}), the fact $\sum_{k=1}^N\gamma_k=1$, we have
    \begin{align*}
        0&=\Phi'(0)\\
        &=\sum_{k=1}^N\frac{\gamma_k}{\lambda(0)h(P,u_k)}\big(\lambda'(0)h(P,u_k)+\lambda(0)\frac{d}{dt}h(P_t,u_k)\big|_{t=0}\big)\\
        &=\sum_{k=1}^N\gamma_k\lambda'(0)+\frac{\gamma_k\delta_k}{h(P,u_k)}\\
        &=\lambda'(0)+\sum_{k=1}^N\frac{\gamma_k\delta_k}{h(P,u_k)}\\
        &=-\frac{1}{m}\sum_{k=1}^N\frac{\delta_k}{h(P,u_k)}\widetilde{C}_m^a(P,u_k)+\sum_{k=1}^N\frac{\gamma_k}{h(P,u_k)}\delta_k\\
        &=\sum_{k=1}^N\Big(\frac{\gamma_k}{h(P,u_k)}-\frac{\widetilde{C}_m^a(P,u_k)}{mh(P,u_k)}\Big)\delta_k.
    \end{align*}
    Since $\delta_1,\cdots,\delta_N$ are arbitrary, $\gamma_k=\frac{1}{m}\widetilde{C}_m^a(P,u_k)$ for $k=1,\cdots,N$.
\end{proof}

\begin{lemma}\cite[Lemma 4.1]{MR3228445}\label{le:4.2}
  If the unit vectors $u_1,\cdots,u_N$ are in general position and $P\in\mathcal{P}(u_1,\cdots,u_N)$, then $F(P,u_i)$ is either a point or a facet for all $1\leq i\leq N$. Moreover, if $n\geq3$ and $F(P,u_i)$ is a facet, then the outer unit normals of $F(P,u_i)$(in $H(P,u_i)$) are in general position. 
\end{lemma}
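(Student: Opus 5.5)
The plan is to argue purely with normal cones of faces and linear algebra, using only the two defining properties of general position. Both properties are available to us. Since $u_1,\cdots,u_N$ are not contained in a closed hemisphere, they positively span $\mathbb{R}^n$, so $N\geq n+1$. Any $n$ of them are linearly independent, hence so is any subfamily of at most $n$ of them.

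\emph{First claim.} Fix $i$ and set $F=F(P,u_i)$, a nonempty face of $P$ of some dimension $d$.
\begin{itemize}
\item Since $P=\bigcap_k H^-(P,u_k)$, every facet of $P$ has outer normal among the $u_k$'s.
\item I will use the standard polytope fact that the normal cone $N(P,F)$ is the positive hull of the outer normals of the facets of $P$ containing $F$. This cone has dimension $n-d$.
\item Because $F=F(P,u_i)$ exactly, $u_i$ lies in the relative interior of $N(P,F)$, and in particular in the linear span $L$ of those facet normals.
\item Suppose $1\leq d\leq n-2$. Then $F$ is not a facet, so $u_i$ is not itself one of these facet normals.
\item Choose $n-d$ of those facet normals, say $u_{j_1},\cdots,u_{j_{n-d}}$, forming a basis of $L$. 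Then $u_i,u_{j_1},\cdots,u_{j_{n-d}}$ are $n-d+1\leq n$ distinct elements of $\{u_k\}$ lying in the $(n-d)$-dimensional space $L$, so they are linearly dependent. This contradicts general position.
\end{itemize}
Hence $d=0$ or $d=n-1$, i.e. $F$ is a point or a facet.

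\emph{Second claim.} Now let $n\geq3$ and let $F$ be a facet, regarded as an $(n-1)$-dimensional polytope in $H(P,u_i)$, whose direction space is $u_i^\perp$.
\begin{itemize}
\item Each facet of $F$ is an $(n-2)$-face of $P$, of the form $F\cap F(P,u_j)$ with $F(P,u_j)$ a facet of $P$.
\item Its outer unit normal inside $H(P,u_i)$ is the normalized projection $v_j=\langle u_j-(u_j\cdot u_i)u_i\rangle$. This is well defined because $u_j\neq\pm u_i$ by independence.
\item The outer normals of the facets of $F$ are therefore a finite subset of these $v_j$'s. Since $F$ is a full-dimensional polytope in $H(P,u_i)$, its facet normals cannot lie in a closed hemisphere of $S^{n-1}\cap u_i^\perp$.
\end{itemize}
It remains to prove that any $n-1$ of them, $v_{j_1},\cdots,v_{j_{n-1}}$, are linearly independent in $u_i^\perp$. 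Suppose a nontrivial relation $\sum c_l v_{j_l}=0$ held. Unwinding the projections gives $\sum c_l' u_{j_l}=c\,u_i$ with not all $c_l'=0$. So the $n$ vectors $u_i,u_{j_1},\cdots,u_{j_{n-1}}$ would be linearly dependent, again contradicting general position.

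The main obstacle I expect is the bookkeeping in the first claim. There I must justify that $u_i$ lies in the span of the normals of facets containing $F$ while not being one of them. I would handle it via the normal cone description of faces of polytopes (Schneider). The assumption $n\geq3$ is needed only so that $H(P,u_i)$ has dimension at least $2$, which makes general position inside it meaningful.
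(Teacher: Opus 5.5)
Your proof is correct. The paper gives no argument of its own for this lemma and simply imports it from Zhu \cite[Lemma 4.1]{MR3228445}; your dimension count on the normal cone $N(P,F)$ for the first part, and your projection of the adjacent facet normals into $u_i^\perp$ for the second part, are the standard proof of that result. The only thing you leave implicit is that $P$ is full-dimensional, which is the usual convention for polytopes in Zhu's setting and is automatic where the lemma is applied, since there $\widetilde{\Psi}_m(P)=1$. Even without that convention, the same count applied to the normal cone at a relative interior point of $P$ shows that general position forces $P$ to be either a point or full-dimensional.
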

\begin{lemma}\label{lemma:3.2}
    If $K\in\mathcal{K}_o^n, \mathrm{int}K\neq\emptyset$. Then for $x\in\mathbb{R}^n,$ with $x+cK\in\mathcal{K}_o^n$ and $c\geq1$, we have
    $\widetilde{\Psi}_m(x+cK)\to\infty$ as $c\to\infty.$
\end{lemma}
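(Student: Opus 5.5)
The plan is to show that $\widetilde{\Psi}_m(x+cK)$ is bounded below by a positive constant times a positive power of $c$; monotonicity and translation will be handled by comparing with a fixed ball. Since $\mathrm{int}K\neq\emptyset$, there is a point $z\in\mathrm{int}K$ and a radius $r>0$ with $z+rB\subset K$, where $B$ is the unit ball. Then $x+cz+crB\subset x+cK$. Put $y_c=x+cz$. The condition $x+cK\in\mathcal{K}_o^n$ says that the origin lies in $x+cK$, but the center $y_c$ of the ball may drift away from the origin. So the comparison has to be done inside $x+cK$ itself, using convexity.

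The key geometric step is as follows. The set $x+cK$ contains the origin and the ball $y_c+crB$. By convexity it contains the convex hull $L_c=\mathrm{conv}(\{o\}\cup(y_c+crB))$. This is an affine image of a set of the form $\mathrm{conv}(\{o\}\cup(w+rB))$ with $|w|$ arbitrary, scaled by $c$. Because $\widetilde{\Psi}_m$ is monotone under inclusion (this is clear from (\ref{equation: 1.3}), since $\mathrm{vol}_m$ of sections is monotone) and is $mn$-homogeneous, we get
\begin{equation*}
\widetilde{\Psi}_m(x+cK)\ge \widetilde{\Psi}_m(L_c)=c^{mn}\,\widetilde{\Psi}_m\bigl(\mathrm{conv}(\{o\}\cup(w_c+rB))\bigr),\qquad w_c=y_c/c=x/c+z.
\end{equation*}

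It remains to bound the last factor below uniformly in $c\ge1$. One difficulty is that $x$ is not fixed: the hypothesis allows $x$ to depend on $c$, subject only to $o\in x+cK$. Since $o\in x+cK$ means $-x/c\in K$, we have $w_c=z-(-x/c)\in z-K$. Hence $|w_c|\le \mathrm{diam}(K)$, so $w_c$ ranges over a compact set $W$. For each $w\in W$, the body $M_w=\mathrm{conv}(\{o\}\cup(w+rB))$ lies in $\mathcal{K}_o^n$, has nonempty interior, and satisfies $\widetilde{\Psi}_m(M_w)>0$. Indeed, every $m$-dimensional subspace $\xi$ meets $M_w$ in a set containing a neighborhood of the origin within $\xi$ when $|w|<r$. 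When $|w|\ge r$, the intersection still has positive $m$-volume for $\xi$ in an open set of $G(n,m)$, namely those $\xi$ meeting $w+rB$ in its interior, and that open set has positive Haar measure. The map $w\mapsto M_w$ is continuous in the Hausdorff metric, so Lemma \ref{lemma: 7.4} gives that $w\mapsto\widetilde{\Psi}_m(M_w)$ is continuous and positive on the compact set $W$. It therefore attains a minimum $\beta>0$. This yields $\widetilde{\Psi}_m(x+cK)\ge \beta c^{mn}\to\infty$.

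I expect the main obstacle to be the uniform lower bound, that is, controlling possibly moving translations. The observation $w_c\in z-K$ is what restores compactness. The positivity of $\widetilde{\Psi}_m(M_w)$ for $o$ on the boundary needs a short check that a positive-measure set of subspaces cuts the body in a set of positive $m$-volume.
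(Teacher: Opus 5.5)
Your proof is correct and follows essentially the paper's route: scale out $c^{mn}$ by homogeneity and bound $\widetilde{\Psi}_m(x+cK)$ below by the minimum of $\widetilde{\Psi}_m$ over a compact family of admissible translates. The compactness comes from $-x/c\in K$ and the continuity from Lemma~\ref{lemma: 7.4}. The paper minimizes over $y+K$, $y\in -K$, directly, so your inscribed cones $M_w$ are an unnecessary detour, though they do make explicit that the minimum is positive, a point the paper leaves unstated.
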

\begin{proof}
     Let $x_0\in\mathbb{R}^n$ such that 
     \begin{equation*}
         \widetilde{\Psi}_m(x_0+K)=\min\{\widetilde{\Psi}_m(x+K),x+K\in\mathcal{K}_o^n\}.
     \end{equation*}
     Then
     \begin{equation*}
           \widetilde{\Psi}_m(cx_0+cK)=\min\{\widetilde{\Psi}_m(x+cK):x+cK\in\mathcal{K}_o^n\}.
     \end{equation*}
   So
    \begin{equation*}
        \widetilde{\Psi}_m(x+cK)\geq\widetilde{\Psi}_m(cx_0+cK)=c^{mn}\widetilde{\Psi}(x_0+K)\to\infty,\quad c\to\infty.
    \end{equation*}
    
\end{proof}
\begin{lemma}\label{lemma:3.3}
For $\{P_i\}_{i\in\mathbb{N}}\subset\mathcal{K}_o^n,\mathrm{int}(P_i)\neq\emptyset$, let
\begin{equation*}
    d_i=\frac{\min\{\widetilde{\Psi}_m(x+P_i):x+P_i\in\mathcal{K}_o^n\}}{\max\{\widetilde{\Psi}_m(y+P_i):y+P_i\in\mathcal{K}_o^n\}}.
\end{equation*}
If
\begin{equation*}
    \inf_{i\in\mathbb{N}}\{\min_{x_i}\widetilde{\Psi}_m(x_i+P_i)\}>0,
\end{equation*}
then
\begin{equation*}
    d_0:=\inf\{ d_i: i=1,2, \cdots\} >0.
\end{equation*}
\end{lemma}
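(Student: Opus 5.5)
We need to prove Lemma \ref{lemma:3.3}: if the minimal translate-values $\min_x\widetilde{\Psi}_m(x+P_i)$ are bounded below by a positive constant, then the ratios $d_i$ of minimum to maximum over admissible translates are bounded below.

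The key observation is that translating $P_i$ by admissible vectors only moves the origin around inside the body. The plan is therefore to bound $\max_y\widetilde{\Psi}_m(y+P_i)$ from above by a constant multiple of $\min_x\widetilde{\Psi}_m(x+P_i)$, with the constant independent of $i$. The admissible translates $y+P_i\in\mathcal{K}_o^n$ are exactly those with $-y\in P_i$. By compactness and the continuity of $\widetilde{\Psi}_m$ (Lemma \ref{lemma: 7.4}), both the maximum and the minimum are attained.

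\emph{Upper bound.} For any $z\in L:=y+P_i$ and any $\xi\in G(n,m)$, the section $L\cap\xi$ is contained in $(L-z)\cap\xi$ translated appropriately only up to a containment in a difference body. Concretely, I will use the inclusion $L\cap\xi\subset (L-L)\cap\xi$, which holds since $o\in L$. This gives
\begin{equation*}
\widetilde{\Psi}_m(y+P_i)\le\widetilde{\Psi}_m(P_i-P_i)
\end{equation*}
for every admissible $y$.

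\emph{Lower bound.} Let $x_i$ attain the minimum and set $L_i=x_i+P_i$. Then $L_i-L_i=P_i-P_i$, and the Rogers--Shephard-type section inequality gives
\begin{equation*}
|(L_i-L_i)\cap\xi|\le\binom{2m}{m}\,\max_{w}|L_i\cap(\xi+w)|.
\end{equation*}
By Brunn's principle the maximal parallel section is controlled by central sections only after a suitable choice of center. To avoid this, I will instead use the inclusion
\begin{equation*}
P_i-P_i\subset n\bigl((L_i)\cap(-L_i)\bigr)\cdot c
\end{equation*}
in the form valid when the origin is a well-centred point of $L_i$. The natural candidate for this point is the centroid: a theorem of Grünbaum type gives $L_i-g_i\subset -n(L_i-g_i)$ for the centroid $g_i$ of $L_i$, so that
\begin{equation*}
L_i-L_i\subset (n+1)\bigl((L_i-g_i)\cap(g_i-L_i)\bigr).
\end{equation*}
Taking sections and using $mn$-homogeneity then gives
\begin{equation*}
\widetilde{\Psi}_m(P_i-P_i)\le (n+1)^{mn}\,\widetilde{\Psi}_m(L_i-g_i)\le (n+1)^{mn}\,\widetilde{\Psi}_m\bigl(-g_i+P_i+x_i\bigr).
\end{equation*}
This bounds the maximum by a dimensional constant times the value at the centroid translate.

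\emph{Main obstacle.} The difficulty is comparing the centroid translate with the actual minimizer $x_i$, since the minimum need not be attained at the centroid. I would try the following. The function $z\mapsto\widetilde{\Psi}_m(P_i-z)$ on $P_i$ is, via Lemma \ref{lemma: 7.3} and Brunn's concavity of $|(P_i-z)\cap\xi|^{1/m}$, a power mean of concave-ish quantities. The plan is to show that its value at the centroid is at most a dimensional constant times its minimum on the region of points that are at least $1/(n+1)$-deep in $P_i$.

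If this comparison fails, the hypothesis $\inf_i\min\widetilde{\Psi}_m>0$ must be used. In that case I would normalize so that $\min=1$ and argue by contradiction. Suppose $d_i\to0$. Then, after applying $\mathrm{SL}(n)$ normalizations (which leave $\widetilde{\Psi}_m$ invariant) and using Blaschke selection on the bodies rescaled to have bounded maximum, one obtains a limit body. On this limit body the maximum is finite and positive while the minimum is zero, so some translate has $\widetilde{\Psi}_m=0$ and is therefore degenerate. But a degenerate translate forces every translate to be degenerate, which contradicts the finite positive maximum and also contradicts continuity (Lemma \ref{lemma: 7.4}).
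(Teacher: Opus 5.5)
There is a genuine gap: your main route proves only the easy half of the lemma. The chain $\widetilde{\Psi}_m(y+P_i)\le\widetilde{\Psi}_m(P_i-P_i)\le(n+1)^{mn}\widetilde{\Psi}_m(P_i-g_i)$, with $g_i$ the centroid of $P_i$, is correct: if $K$ has centroid $o$ then $-K\subset nK$, hence $K-K\subset K+nK=(n+1)K$. But it compares the maximum with the value at the centroid translate, and that value is itself at most the maximum. It gives no lower bound for $\min_x\widetilde{\Psi}_m(x+P_i)$ in terms of the maximum, and that lower bound is the whole content of the lemma.

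The repair you suggest, comparing the centroid value with the minimum over points of depth at least $1/(n+1)$, cannot close this. The admissible translates include all $x$ with $-x\in\partial P_i$, and the minimum may be attained there. For $K=B^n$ one has $|(B^n-z)\cap\xi|=\omega_m(1-\mathrm{dist}(z,\xi)^2)^{m/2}$, so $\widetilde{\Psi}_m(B^n-z)$ strictly decreases in $|z|$ and the minimum is attained exactly at the boundary translates.

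The compactness argument in your last paragraph is the right idea, but as written it lacks the two facts that make it work.

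(a) ``$\mathrm{SL}(n)$ normalizations'' plus a bounded maximum do not give a bounded sequence, since one $\mathrm{SL}(n)$-orbit contains arbitrarily elongated bodies. Instead, use that $d_i$ is unchanged under translations and under all of $\mathrm{GL}(n)$, because $\widetilde{\Psi}_m(AK)=|\det A|^{m}\widetilde{\Psi}_m(K)$ and $z\in K$ iff $Az\in AK$. Then put each $P_i$ in John position, $B^n\subset P_i\subset nB^n$. Every admissible translate now lies in $2nB^n$, so the maximum is at most $\widetilde{\Psi}_m(2nB^n)$. Moreover, the pairs $(K,z)$ with $B^n\subset K\subset nB^n$ and $z\in K$ form a compact set on which $(K,z)\mapsto\widetilde{\Psi}_m(K-z)$ is continuous by Lemma~\ref{lemma: 7.4}.

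(b) Your step ``some translate has $\widetilde{\Psi}_m=0$ and is therefore degenerate'' is precisely the positivity statement that has to be proved: if $\mathrm{int}K\neq\emptyset$, then $\widetilde{\Psi}_m(K-z)>0$ for every $z\in K$, including $z\in\partial K$. This holds because $K-z$ contains a ball $B$. The set of $\xi\in G(n,m)$ meeting $\mathrm{int}B$ is nonempty and open, hence of positive Haar measure, and each such $\xi$ meets $B$ in a set of positive $m$-volume.

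With (a) and (b), the continuous positive function $\widetilde{\Psi}_m(K-z)$ has a positive minimum $c'$ on that compact set. Hence $d_i\ge c'/\widetilde{\Psi}_m(2nB^n)=:c_{n,m}>0$ for every sequence. The hypothesis $\inf_i\min_x\widetilde{\Psi}_m(x+P_i)>0$ is never needed. It carries no information about $d_i$ anyway, since rescaling each $P_i$ by a large factor makes it hold without changing $d_i$.

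This is also the weak point of the paper's own proof. It normalizes $\mathrm{diam}P_i=1$, bounds the maximum by $\widetilde{\Psi}_m(2B^n)$, and takes a positive lower bound for the minimum from the hypothesis. But the rescaling multiplies each minimum by $\mathrm{diam}(P_i)^{-mn}$, so the hypothesis no longer applies. A diameter normalization also does not prevent the bodies from flattening, in which case the minimum tends to $0$. An affine normalization, as in your fallback once completed, controls scale and flattening at the same time and is what actually proves the lemma.
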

\begin{proof}
    For $P\in\mathcal{K}_o^n,\text{int}(P)\neq\emptyset$, Since 
    \begin{align*}
        &\frac{\min\{\widetilde{\Psi}_m(P+x):P+x\in\mathcal{K}_o^n\}}{\max\{\widetilde{\Psi}_m(P+y):P+y\in\mathcal{K}_o^n\}}\\
        =&\frac{\min\{\widetilde{\Psi}_m((P+x)/\text{diam}(P+x)):P+x\in\mathcal{K}_o^n\}}{\max\{\widetilde{\Psi}_m((P+y)/\text{diam}(P+y)):P+y\in\mathcal{K}_o^n\}},
    \end{align*}
    we can assume $\text{diam}(P_i)=1$. Suppose 
    \begin{equation*}
        \min\{ \widetilde{\Psi}_m(x+P_{i}):x+P_i\in\mathcal{K}_o^n\}=\widetilde{\Psi}_m(x_i+P_i)
    \end{equation*}
    and
    \begin{equation*}
         \max \{\widetilde{\Psi}_m(y+P_{i}):y+P_i\in\mathcal{K}_o^n\}=\widetilde{\Psi}_m(y_i+P_i).
    \end{equation*}
     Then
     \begin{equation*}
d_i
= \frac{\widetilde{\Psi}_m(x_i+P_i)}{\widetilde{\Psi}_m(y_i+P_i)} 
>\frac{\widetilde{\Psi}_m(x_i+P_i)}{\widetilde{\Psi}_m(2B^n)}> c > 0
\end{equation*}
     for some constant $c$.
\end{proof}
\begin{theorem}\label{th:4.3}
    If the unit vectors $u_1,\cdots,u_N$ are in general position, $P_i\in\mathcal{P}(u_1,\cdots,u_N)$, $i=1,2,\cdots$ and $\widetilde{\Psi}_m(P_i)=1$, then 
    $\{P_i\}_{i=1}^{\infty}$ is bounded.
\end{theorem}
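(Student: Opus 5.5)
We argue by contradiction, using a blow-down argument. As is implicit in the statement (since $\widetilde{\Psi}_m$ is defined on $\mathcal{K}_o^n$), each $P_i$ contains the origin. Suppose $\{P_i\}$ is unbounded. Passing to a subsequence, we may assume $\mathrm{diam}(P_i)\to\infty$.

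\textbf{Rescaling and passing to a limit.} Set $Q_i=P_i/\mathrm{diam}(P_i)$. Then $Q_i\in\mathcal{P}(u_1,\cdots,u_N)$, $o\in Q_i$, and $\mathrm{diam}(Q_i)=1$. Since $\widetilde{\Psi}_m$ is homogeneous of degree $mn$,
\begin{equation*}
\widetilde{\Psi}_m(Q_i)=\mathrm{diam}(P_i)^{-mn}\to0.
\end{equation*}
Because $o\in Q_i$ and the diameters are fixed, the $Q_i$ lie in a fixed ball. By Blaschke selection, a subsequence converges to a compact convex set $Q\in\mathcal{K}_o^n$ with $\mathrm{diam}(Q)=1$. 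By the closure remark preceding Lemma \ref{le:3.1}, $Q\in\mathcal{P}(u_1,\cdots,u_N)$, so $Q=\bigcap_k H^-(Q,u_k)$. By the continuity of $\widetilde{\Psi}_m$ (Lemma \ref{lemma: 7.4}), $\widetilde{\Psi}_m(Q)=0$.

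\textbf{$Q$ must be lower-dimensional.} Suppose instead that $\mathrm{int}\,Q\neq\emptyset$. Then $Q$ contains a small ball $B$ with $o\in\mathrm{cl}(\mathrm{conv}(B\cup\{o\}))\subset Q$. For a set of $\xi\in G(n,m)$ of positive $\nu_m$-measure, $\xi$ meets $\mathrm{int}\,\mathrm{conv}(\{o\}\cup B)$. For such $\xi$, $Q\cap\xi$ has positive $m$-volume, which gives $\widetilde{\Psi}_m(Q)>0$. This is a contradiction, so $\dim Q\le n-1$.

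\textbf{Ruling out lower-dimensional $Q$.} This is the main step, and it uses general position through Lemma \ref{le:4.2}: every support set $F(Q,u_k)$ is either a point or a facet, i.e.\ $(n-1)$-dimensional.
\begin{itemize}
\item If $\dim Q\le n-2$, no support set can be a facet, so every $F(Q,u_k)$ is a point.
\item If $\dim Q=n-1$, a facet $F(Q,u_k)$ must equal $Q$ itself.
\end{itemize}
Suppose two distinct indices $i\neq j$ both had $F(Q,u_i)=F(Q,u_j)=Q$. Then $Q$ would lie in $H(Q,u_i)\cap H(Q,u_j)$. Since general position makes $u_i,u_j$ linearly independent (so $u_j\neq\pm u_i$), this intersection has dimension $n-2$, a contradiction. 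Hence at most one index, say $i$, has $F(Q,u_i)=Q$, and $F(Q,u_k)$ is a single point for every $k\neq i$ (in the case $\dim Q\le n-2$ there is no such $i$ at all).

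Now pick $x$ in the relative interior of $Q$. This is possible, and $x$ is not a vertex, because $\dim Q\ge1$ follows from $\mathrm{diam}\,Q=1$. For each $k\neq i$, the support set $F(Q,u_k)$ is a single point different from $x$, so $x\cdot u_k<h(Q,u_k)$. By finiteness of the index set, for small $\varepsilon>0$ the point $x-\varepsilon u_i$ (or $x+\varepsilon w$ for any $w$ when no index $i$ exists) still satisfies all the constraints $y\cdot u_k\le h(Q,u_k)$. This includes the $i$-th constraint, since $(x-\varepsilon u_i)\cdot u_i<h(Q,u_i)$. So $x-\varepsilon u_i\in\bigcap_k H^-(Q,u_k)=Q$. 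But $x-\varepsilon u_i$ lies outside $H(Q,u_i)\supset Q$, a contradiction. (With no such $i$, we would get an open neighbourhood of $x$ inside $Q$, contradicting $\dim Q\le n-1$.)

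Hence $\{P_i\}$ is bounded. The delicate point is exactly this last step: one must check that Lemma \ref{le:4.2} applies to the possibly degenerate limit $Q$, and that the representation $Q=\bigcap_k H^-(Q,u_k)$ is preserved under the limit. I would verify both carefully.
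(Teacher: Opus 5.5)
Your proof is correct, and it takes a genuinely different route from the paper's.

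\textbf{How the paper argues.} It proves the contrapositive by a direct construction. It sorts the normals according to whether $\mathrm{diam}\,F(P_i,u_j)$ stays bounded. It uses the Minkowski relation $\sum_j \mathrm{vol}_{n-1}(F(P_i,u_j))\,u_j=0$ to show that the normals with unbounded facets are not contained in a closed hemisphere. It then forms $P_{1i}=\bigcap_{j\in J}H^-(u_j,h(P_i,u_j))$, inscribes a homothet $c_iP_{1i}+x_i$ in $P_i$, and controls $\widetilde{\Psi}_m$ over translates with Lemmas \ref{lemma:3.2} and \ref{lemma:3.3}.

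\textbf{How you argue.} You normalize the diameter and use Blaschke selection, homogeneity, and the continuity of $\widetilde{\Psi}_m$ on all of $\mathcal{K}_o^n$ (Lemma \ref{lemma: 7.4}). This reduces the theorem to a purely polyhedral fact: under general position, every member of $\mathcal{P}(u_1,\cdots,u_N)$ is either a point or full-dimensional.

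\textbf{What each route buys.} Yours is shorter. It sidesteps the steps the paper only sketches: the claim that $\widetilde{\Psi}_m(P_i)$ is ``obviously'' unbounded when all facet diameters blow up, the uniform bound $c_i\geq c$, and the ratio estimate of Lemma \ref{lemma:3.3}. It also yields the contrapositive form the paper uses later, since merely bounded $\widetilde{\Psi}_m(P_i)$ still forces $\widetilde{\Psi}_m(Q_i)\to0$. What it gives up is any quantitative or translation-invariant control. You need $o\in P_i$ to keep the $Q_i$ in a fixed ball, whereas the paper aims at a lower bound for $\min_x\widetilde{\Psi}_m(x+P_i)$. That costs nothing here, since $\widetilde{\Psi}_m$ lives on $\mathcal{K}_o^n$ and in Lemma \ref{le:5.1} the $P_i$ contain $o$ in their interiors.

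\textbf{The two points you flag both hold.}

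The closure property follows from LP duality. The support value $h(Q_i,c)$ equals the minimum of $\sum_k\lambda_k h(Q_i,u_k)$ over the finitely many vertices of $\{\lambda\geq0:\sum_k\lambda_ku_k=c\}$, a set independent of $i$. Passing to the limit gives $h(Q,c)=h(R,c)$ for every $c$, where $R=\bigcap_kH^-(u_k,h(Q,u_k))$. Hence $Q=R$, and in particular $Q$ is a polytope. Hoffman's error bound gives the same conclusion.

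Lemma \ref{le:4.2} does not need $\dim Q=n$. Suppose $F(Q,u_i)$ has dimension $d\geq1$. Then $u_i$ lies in the relative interior of the normal cone $\mathrm{pos}\{u_k:k\in I\}$, where $I$ indexes the constraints tight on $F(Q,u_i)$. This cone has dimension $n-d\leq n-1$, so general position makes these $u_k$ linearly independent with $|I|=n-d$. A relative-interior point of a simplicial cone can coincide with one of its generators only if $|I|=1$, that is, only if $d=n-1$.
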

\begin{proof}
    We only need to prove that if the unit vectors $u_1,\cdots,u_N$ are in general position, for $P_i\in \mathcal{P}(u_1,\cdots,u_N)$,  $\{\mathrm{diam}(P_i)\}$ is unbounded, then $\{\widetilde{\Psi}_m(P_i)\}$ is not bounded.  
    If the diameters of every facets $\{\text{diam}(F(P_i,u_{j}))\}_{i\in\mathbb{N}}$
    is unbounded, for each $j=1,\cdots,N$, then $\{\widetilde{\Psi}_m(P_i)\}$ is obviously unbounded. If there is at least one $k\in\{1,\cdots,N\}$ such that the diameter $\{\text{diam}(F(P_i,u_{k}))\}_{i\in\mathbb{N}}$ is bounded, let $K=\{u_k\}$ such that $\{\text{diam}F(P_i,u_k)\}_{i\in\mathbb{N}},u_k\in K$ is bounded, and $J=\{u_j\}$ such that $\{\text{diam}F(P_i,u_j)\}_{i\in\mathbb{N}}$ is unbounded for $u_j\in J$. Let $l_{ij}=\text{vol}_{n-1}(F(P_i,u_j))$, We claim that $J$ are not concentrated on any closed hemisphere, if not, suppose $J$ are concentrated on closed hemisphere $H^-(v,0)$. Since $J$ are in general position, they can not concentrated on $v^\perp$. According to classical theorem of Minkowski problem,we have
    \begin{equation*}
        \sum_{u_j\in H^-(v,0)}l_{ij}u_{ij}+\sum_{u_k\notin H^-(v,0)}l_{ik}u_{ik}=0,
    \end{equation*}
    that is
    \begin{equation*}
        |\sum_{u_j\in H^-(v,0)}l_{ij}u_{ij}|=|\sum_{u_k\notin H^-(v,0)}l_{ik}u_{ik}|,
    \end{equation*}
    the left side is unbounded as $i\to\infty$ while the right side is bounded as $i\to\infty$, a contradiction.
   Let 
   \begin{equation*}
       P_{1i}:=\bigcap_{j\in J}H^-(u_j,h(P_i,u_j)),
   \end{equation*}
   then as $\text{diam}(F(P_i,u_j)),u_j\in J$ is unbounded, $\widetilde{\Psi}_m(P_{1i})\to\infty,i\to\infty$. Let $P_{2i}=c_iP_{1i}+x_i$ such that $P_{2i}$ is inscribed in $P_i$.
   
   Let
   \begin{equation*}
       d_i=\frac{\min\{\widetilde{\Psi}_m\{x+P_{1i}):x+P_{1i}\in\mathcal{K}_o^n\}}{\max\{\widetilde{\Psi}_m(y+P_{1i}):y+P_{1i}\in\mathcal{K}_o^n\}},\quad \text{for} \enspace x_i+P_{1i},y_i+P_{1i}\in\mathcal{K}_o^n
   \end{equation*}
   and 
   \begin{equation*}
       d_0=\inf d_i.
   \end{equation*}
   
   We can assume $o\in P_{2i}$, otherwise we can shift $P_i$ to $P_i+x_i$ so that $o\in P_{2i}+x_i$, and if $\widetilde{\Psi}_m(P_{2i}+x_i)\to\infty, $ hence $\widetilde{\Psi}_m(P_i+x_i)\to\infty,i\to\infty,$ and by Lemma \ref{lemma:3.2}, $\widetilde{\Psi}_m(P_i)$ also tend to infinite.
   
   Since $\{\text{diam}(F(P_i,u_k))\}_{i\in\mathbb{N}},u_k\in K$ is bounded, $d(P_{1i},P_{2i})<c_0$ for some constant $c_0$, then there is a constant $c>0$ such that  $c_i\geq c$ and
   \begin{align*}
       \widetilde{\Psi}_m(P_{2i})=&\widetilde{\Psi}_m(x_i+c_iP_{1i})\\
       \geq&\min\{\widetilde{\Psi}_m(x+c_iP_{1i}):x+c_iP_{1i}\in\mathcal{K}_o^n\}\\
       \geq&d_0\max\{\widetilde{\Psi}_m(x+c_iP_{1i}):x+c_iP_{1i}\in\mathcal{K}_o^n\}\\
       \geq&d_0\widetilde{\Psi}_m(c_iP_{1i})\\
       =& d_0c_i^{mn}\widetilde{\Psi}_m(P_{1i})\\
       \geq&d_0c^{mn}\widetilde{\Psi}_m(P_{1i}).
   \end{align*}
    By Lemma \ref{lemma:3.2}, we have
    \begin{equation}\label{eq:3.4}
        \min\{\widetilde{\Psi}_m(x+P_{1i}):x+P_{1i}\in\mathcal{K}_o^n\}\to\infty,\quad i\to\infty.
    \end{equation}
    So by Lemma \ref{lemma:3.3}, $d_0>0$, $\widetilde{\Psi}_m(P_{2i})\to\infty,\quad i\to\infty.$ Since $P_{2i}\subset P_{i},$ we have $\widetilde{\Psi}_m(P_i)\to\infty,i\to\infty.$
\end{proof}
\section{The affine dual minkowski problem for general measures}
In this section we deal the affine dual Minkowski problem for general measures. We first deal the discrete case.
\begin{lemma}\label{le:5.1}
   If $\gamma_1,\cdots,\gamma_N$ are positive and the unit vectors $u_1,\cdots,u_N$ are in general position and $m\in\{2,\cdots,n-1\}$, then there exists a $P\in\mathcal{P}_N(u_1,\cdots,u_N)$ such that $\widetilde{\Psi}_m(P)=\sum_{k=1}^N\gamma_k$ and 
   \begin{equation*}
       \Phi_P(o)=\inf\{\Phi_Q(o):Q\in\mathcal{P}_N(u_1,\cdots,u_N)\enspace and\enspace\widetilde{\Psi}_m(Q)=\sum_{k=1}^N\gamma_k \},
   \end{equation*}
   where $\Phi_Q(o)=\sum_{k=1}^N\gamma_k\log(h(Q,u_k))$.
\end{lemma}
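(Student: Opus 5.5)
We argue by a direct minimization. Normalize $\sum_{k=1}^N\gamma_k=1$; by $mn$-homogeneity of $\widetilde{\Psi}_m$ this costs nothing, since rescaling shifts $\Phi_Q(o)$ by a constant. Consider the larger class $\mathcal{A}=\{Q\in\mathcal{P}(u_1,\cdots,u_N): o\in Q,\ \widetilde{\Psi}_m(Q)=1\}$. It is nonempty, because a suitable dilate of $\bigcap_k H^-(u_k,1)$ lies in it. Take a minimizing sequence $Q_i\in\mathcal{P}_N(u_1,\cdots,u_N)$ with $\widetilde{\Psi}_m(Q_i)=1$ and $\Phi_{Q_i}(o)\to\inf$. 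The infimum is finite from above by the example just given.

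The first step is compactness. By Theorem \ref{th:4.3}, $\{Q_i\}$ is bounded. Blaschke selection then gives a subsequence converging to a compact convex $P$ with $o\in P$. The limit lies in $\mathcal{P}(u_1,\cdots,u_N)$, as noted before (\ref{eq:3.3}), and Lemma \ref{lemma: 7.4} gives $\widetilde{\Psi}_m(P)=1$. In particular $\dim P=n$, because $\widetilde{\Psi}_m$ vanishes on lower-dimensional bodies. Since the $h(Q_i,u_k)$ are bounded above, $\Phi_{Q_i}(o)$ is bounded above. The infimum is also bounded below: if some $h(Q_i,u_k)\to 0$, one has to rule out $\Phi\to-\infty$. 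Here $o\in P$ with $\mathrm{int}P\neq\emptyset$ only allows $h(P,u_k)=0$ for $u_k$ spanning a pointed cone at $o$, with all other $h(P,u_j)$ bounded.

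The second step, which I expect to be the main obstacle, is to show that $o\in\mathrm{int}P$ and that $P$ has exactly $N$ facets.

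For the origin: suppose $o\in\partial P$, so $h(P,u_k)=0$ for $k$ in some nonempty index set $I$. Then $\Phi_P(o)=-\infty$, which might look favourable to the infimum, but it must be excluded. I would handle this by a perturbation that exploits $m\ge 2$. Translate slightly toward the interior, i.e. replace $P$ by $P-\epsilon z$ with $z\in\mathrm{int}P$, and rescale to keep $\widetilde{\Psi}_m=1$. The plan is to show that near a boundary origin the quantity $\widetilde{\Psi}_m(Q_i)$ stays controlled while $\sum\gamma_k\log h(Q_i,u_k)$ is comparable to $\log$ of the distance from $o$ to $\partial Q_i$. Then use the cone structure at $o$ and the fact that for $m\ge 2$ the sections $|Q\cap\xi|^{n}$ through a boundary vertex still detect the full $m$-dimensional body. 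The aim is to derive that the minimizing sequence can be replaced by one whose origin stays a uniform distance from the boundary. This is where $m\in\{2,\cdots,n-1\}$ enters, and it is the delicate part; honestly I would first try to reduce it to a Lemma \ref{le:3.1}-type first variation in the translation variable.

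For the facets: suppose $F(P,u_{k_0})$ is not a facet. By Lemma \ref{le:4.2} it is then a single point. Cut $P$ by the hyperplane $x\cdot u_{k_0}=h(P,u_{k_0})-t$ to get $P_t\in\mathcal{P}_N$. The volume removed is $O(t^n)$, and the same order bounds the decrease of $\widetilde{\Psi}_m$ via the representation in Lemma \ref{lemma: 7.3}. So the rescaling factor is $\lambda_t=1+O(t^n)$. Hence
\[
\Phi_{\lambda_tP_t}(o)=\Phi_P(o)+\gamma_{k_0}\log\Big(1-\tfrac{t}{h(P,u_{k_0})}\Big)+O(t^n),
\]
which is strictly less than $\Phi_P(o)$ for small $t>0$, contradicting minimality (extended to $\mathcal{A}$ by the continuity (\ref{eq:3.3})). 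Therefore $P\in\mathcal{P}_N(u_1,\cdots,u_N)$ attains the infimum.
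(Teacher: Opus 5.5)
\emph{Verdict: the proposal has a genuine gap, and it cannot be closed, because the statement as intended (a minimizer with $o\in\mathrm{int}\,P$) is false.}

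The gap is the step you flag as the main obstacle: showing $o\in\mathrm{int}\,P$, or equivalently that the infimum is finite. Your claim that ``the infimum is also bounded below'' is false. The reason is that $Q\mapsto\sum_k\gamma_k\log h(Q,u_k)$ is not translation invariant, while a translation costs only a bounded rescaling in the constraint.

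Concretely, fix $Q\in\mathcal{P}_N(u_1,\cdots,u_N)$ with $o\in\mathrm{int}\,Q$, fix a vertex $v$ of $Q$, and let $x\in\mathrm{int}\,Q$ tend to $v$. Then:
\begin{itemize}
\item $Q-x\in\mathcal{P}_N(u_1,\cdots,u_N)$ and $o\in\mathrm{int}(Q-x)$;
\item $h(Q-x,u_k)=(v-x)\cdot u_k\to0$ for every $u_k$ normal to a facet through $v$, and the other support values stay bounded;
\item by Lemma \ref{lemma: 7.4}, $\widetilde{\Psi}_m(Q-x)\to\widetilde{\Psi}_m(Q-v)$, which is positive since $Q-v$ is $n$-dimensional with $o$ as a vertex.
\end{itemize}
Hence $\lambda_x=\widetilde{\Psi}_m(Q-x)^{-1/(mn)}$ stays bounded, and with $\sum_k\gamma_k=1$,
\[
\widetilde{\Psi}_m(\lambda_x(Q-x))=1,\qquad \Phi_{\lambda_x(Q-x)}(o)=\log\lambda_x+\sum_{k=1}^N\gamma_k\log h(Q-x,u_k)\longrightarrow-\infty .
\]
So the infimum is $-\infty$. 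Every minimizing sequence (bounded by Theorem \ref{th:4.3}) has limits with $o\in\partial P$, and no minimizer with $o$ in the interior exists. Moving the origin toward the boundary lowers $\Phi$, so your translation-plus-$m\ge2$ plan points the wrong way.

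The paper's proof does not avoid this either. It passes over exactly this point by quoting (\ref{eq:3.3}), which comes from Zhu's log-Minkowski argument. There, $\xi(P)$ maximizes $\sum_k\gamma_k\log(h(P,u_k)-\xi\cdot u_k)$ and $\Phi_P(\xi(P))$ is translation invariant. With the symmetrized $\Phi_P$ used here, $\xi(P)=o$ trivially, and the invariance that keeps the origin inside is lost. A workable scheme would have to restore the inf--sup over $\xi$. It would also have to handle the fact that $\widetilde{\Psi}_m$, unlike volume, is not translation invariant, so the first-variation argument of Lemma \ref{le:3.1} would need a translation direction too.

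Your facet step is otherwise sound, and sharper than the paper's. The paper bounds $\mathrm{vol}_m((P\setminus P_\delta)\cap\xi)\le c_2\delta^m$ uniformly in $\xi$, giving $\widetilde{\Psi}_m(P_\delta)\ge 1-C\delta^m$. That is the only place $m\ge2$ is used: it lets $\delta^m$ lose to the linear gain $\gamma_{i_0}\delta/h(P,u_{i_0})$.

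Your $O(t^n)$ bound is correct whenever $h(P,u_{k_0})>0$. The cap $C_t$ has diameter $O(t)$ and stays a positive distance from $o$. Then $a^n-b^n\le na^{n-1}(a-b)$ together with $\int_{G(n,m)}\mathrm{vol}_m(C_t\cap\xi)\,d\nu_m(\xi)=c\int_{C_t}|x|^{m-n}\,dx=O(t^n)$ gives a loss of order $t^n$. So $m\ge2$ is not actually needed there.

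This still presupposes $h(P,u_{k_0})>0$, which is the origin step, where the statement fails. A minor point: cutting one vertex does not by itself produce an element of $\mathcal{P}_N$. You need the further approximation that the paper uses.
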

\begin{proof}
    It is easily seen that it is sufficient to establish the lemma under assumption that $\sum_{k=1}^N\gamma_k=1$.

    Thus, we can choose a sequence $P_i\in\mathcal{P}_N(u_1,\cdots,u_N)$ with $\widetilde{\Psi}_m(P_i)=1$ such that $\Phi_{P_i}(o)$ converges to 
    \begin{equation*}
        \inf\{\Phi_Q(o):Q\in\mathcal{P}_N(u_1,\cdots,u_N)\enspace and\enspace\widetilde{\Psi}_m(Q)=1 \}.
    \end{equation*}
    From Theorem \ref{th:4.3}, $\{P_i\}$ is bounded. Thus, from (\ref{eq:3.3}) and the Blaschke's selection theorem, there exists a subsequence of $\{P_i\}$ that converges to a polytope $P$ such that $P\in\mathcal{P}(u_1,\cdots,u_N)$, $\widetilde{\Psi}_m(P)=1$ and
    \begin{equation}\label{eq:5.1}
        \Phi_P(o)=\inf\{\Phi_Q(o):Q\in\mathcal{P}_N(u_1,\cdots,u_N)\enspace and\enspace\widetilde{\Psi}_m(Q)=1 \}.
    \end{equation}

    We next prove that $F(P,u_i)$ are facets for all $i=1,\cdots,N$. Otherwise, from Lemma \ref{le:4.2} and the fact that $N\geq n+1$, there exist $1\leq i_0<\cdots<i_k\leq N$ with $k\geq 0$ such that 
    \begin{equation*}
        F(P,u_i)
    \end{equation*}
    is a point for $i\in\{i_0,\cdots,i_k\}$ and is a facet of $P$ for $i\in\{1,\cdots,N\}\backslash\{i_0,\cdots,i_k\}.$

    Choose $\delta>0$ small enough so that the polytope
    \begin{equation*}
        P_{\delta}=P\cap\{x:x\cdot u_{i_0}\leq h(P,u_{i_0})-\delta\}
    \end{equation*}
    has exactly $(N-k)$ facets and 
    \begin{equation*}
        P\cap\{x:x\cdot u_{i_0}\geq h(P,u_{i_0})-\delta\}
    \end{equation*}
    is a truncated cone. 

    For $a>b$, we have the inequality 
    \begin{equation*}
        (a-b)^n>a^n-2na^{n-1}b.
    \end{equation*}
    Let $\max_{\xi\in G(n,m)}\text{vol}_m(P\cap\xi)=c_1$, then
    \begin{align*}
      \widetilde{\Psi}_m(P_{\delta})&=\int_{G(n,m)}\text{vol}_m(P_{\delta}\cap\xi)^nd\xi\\
      &=\int_{G(n,m)}\big(\text{vol}_m(P\cap\xi)-\text{vol}_m((P\backslash P_\delta)\cap\xi)\big)^nd\xi
      \\&>\int_{G(n,m)}\big(\text{vol}_m(P\cap\xi)-c_2\delta^m\big)^nd\xi\\
      &>\int_{G(n,m)}\text{vol}_m(P\cap\xi)^n-2nc_2\text{vol}_m(P\cap\xi)^{n-1}\delta^md\xi\\
      &>\widetilde{\Psi}_m(P)-2nc_1^{n-1}c_2\delta^m\\
      &=1-2nc_1^{n-1}c_2\delta^m,
    \end{align*}
    where $c_2$ is a constant that depends on $P$ and direction $u_{i_0}$. The first inequality is because $\sup_{\xi\in G(n,m)}\text{diam}((P\backslash P_\delta)\cap\xi)<c\delta$, so $\text{vol}_m((P\backslash P_\delta)\cap\xi)^n\leq \omega_m\text{diam}(((P\backslash P_\delta)\cap\xi)/2)^m<\omega_m(c/2)^m\delta^m=:c_2\delta^m$.

    Let $\delta$ be small enough so that $h(P,u_k)>\delta$ for all $k\in\{1,\cdots,N\}$, let $d_0=\text{diam}(P)$, and let 
    \begin{equation*}
        \lambda=\widetilde{\Psi}_m(P_\delta)^{-\frac{1}{mn}}<\frac{1}{(1-2nc_1^{n-1}c_2\delta^m)^{\frac{1}{mn}}}. 
    \end{equation*} 
    From this, the fact that $\sum_{k=1}^N\gamma_k=1$ and $d_0>h(P,u_{i_0})>0$, we have
    \begin{align*}
        &\prod_{k=1}^N(h(\lambda P_{\delta},u_k))^{\gamma_k}\\
        =&\lambda\prod_{k=1}^Nh(P_\delta,u_k)^{\gamma_k}\\
        =&\lambda\prod_{k\neq i_0}h(P,u_k)^{\gamma_k}\cdot\big(h(P,u_{i0})-\delta)\big)\\
        =&\lambda\prod_{k=1}^N h(P,u_k)^{\gamma_k}\Big(1-\frac{\delta}{h(P,u_{i_0})}\Big)^{\gamma_{i_0}}\\
        <&\prod_{k=1}^Nh(P,u_k)^{\gamma_k}\frac{(1-\frac{\delta}{d_0})^{\gamma_{i_0}}}{(1-2nc_1^{n-1}c_2\delta^m)^{\frac{1}{mn}}}.
    \end{align*}
    Let $g(\delta)=(1-2nc_1^{n-1}c_2\delta^m)^{\frac{1}{mn\gamma_{i_0}}}-1+\frac{\delta}{d_0}$, then $g(0)=0$ and
    \begin{equation*}
        g'(\delta)=\frac{1}{d_0}+\frac{1}{mn\gamma_{i_0}}(1-2nc_1^{n-1}c_2\delta^m)^{\frac{1}{mn\gamma_{i_0}}-1}(-2nc_1^{n-1}c_2)m\delta^{m-1}>0
    \end{equation*}
    for small positive $\delta$ if $m>1$. Thus, there exists a $\delta_0>0$ such that $g(\delta_0)>g(0)=0$ and 
    \begin{equation*}
        \frac{(1-\frac{\delta_0}{d_0})^{\gamma_{i_0}}}{(1-2nc_1^{n-1}c_2\delta_0^m)^{\frac{1}{mn}}}<1.
    \end{equation*}
    In this case,  
    $P_{\delta_0}$ has exactly $(N-k)$ facets, and 
    \begin{equation*}
        \prod_{k=1}^N(h(\lambda_0P_{\delta_0},u_k))^{\gamma_k}<\prod_{k=1}^N(h(P,u_k))^{\gamma_k},
    \end{equation*}
    where $\lambda_0=\widetilde{\Psi}_m(P_{\delta_0})^{-\frac{1}{mn}}$. Thus,
    \begin{equation*}
        \Phi_{\lambda_0P_{\delta_0}}(o)<\Phi_P(o).
    \end{equation*}
    Let $P_0=\lambda_0P_{\delta_0}$, then $P_0\in\mathcal{P}(u_1,\cdots,u_N)$, $\widetilde{\Psi}_m(P_0)=1$ and
    \begin{equation}\label{eq:5.2}
        \Phi_{P_0}(o)<\Phi_P(o).
    \end{equation}
    Since $P_0$ has $N-k$ facets, we can assume these normal of facets are $u_{k+1},\cdots,u_{N}$, and $F(P_0,u_1),\cdots,F(P_0,u_k)$ are points.
     Choose positive $\delta_j$ so that $\delta_j\to 0$ as $j\to\infty$,
    \begin{equation*}
        P_{\delta_j}=P_0\cap\Big(\cap_{i=1}^k\{x:x\cdot u_{i}\leq h(P_0,u_{i})-\delta_j\}\Big),
    \end{equation*}
    and $\lambda_jP_{\delta_j}\in\mathcal{P}_N(u_1,\cdots,u_N)$, where $\lambda_j=\widetilde{\Psi}_m(P_{\delta_j})^{-\frac{1}{mn}}$. Obviously, $\lambda_jP_{\delta_j}$ converges to $P_0$. From (\ref{eq:3.3}) and (\ref{eq:5.2}), we have
    \begin{align*}
        \lim_{j\to\infty}\Phi_{\lambda_jP_{\delta_j}}(o)&=\Phi_{P_0}(o)\\
        &<\Phi_P(o)\\
        &= \inf\{\Phi_Q(o):Q\in\mathcal{P}_N(u_1,\cdots,u_N) \enspace and\enspace\widetilde{\Psi}_m(Q)=1 \}.
    \end{align*}
    This is contradiction with (\ref{eq:5.1}). Therefore, $P\in\mathcal{P}_N(u_1,\cdots,u_N)$.
\end{proof}

\begin{theorem}\label{theorem: 5.2}
    If $\gamma_1,\cdots,\gamma_N\in\mathbb{R}^+$ and the unit vectors $u_1,\cdots,u_N$ are in general position and $m\in\{2,\cdots,n-1\}$, then there exists a polytope (containing the origin in its interior) such that
    \begin{equation*}
        \widetilde{C}_m^a(P,\cdot)=\sum_{k=1}^N\gamma_k\delta_{u_k}.
    \end{equation*}
\end{theorem}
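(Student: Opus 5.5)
The plan is to combine Lemma \ref{le:5.1} with Lemma \ref{le:3.1} and then rescale using homogeneity.

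First, given $\gamma_1,\dots,\gamma_N>0$, I will apply Lemma \ref{le:5.1} to the rescaled weights $\gamma_k/m$ rather than to $\gamma_k$. These are still positive, the vectors $u_1,\dots,u_N$ are in general position, and $m\in\{2,\dots,n-1\}$. So the lemma produces $P\in\mathcal{P}_N(u_1,\dots,u_N)$ with $\widetilde{\Psi}_m(P)=\sum_k\gamma_k/m$ that minimizes $\Phi_Q(o)=\sum_k(\gamma_k/m)\log h(Q,u_k)$ over all $Q\in\mathcal{P}_N(u_1,\dots,u_N)$ with the same value of $\widetilde{\Psi}_m$.

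Second, Lemma \ref{le:3.1} applies with the weights $\gamma_k/m$ and gives
\[
\widetilde{C}_m^a(P,\cdot)=m\sum_{k=1}^N\frac{\gamma_k}{m}\delta_{u_k}=\sum_{k=1}^N\gamma_k\delta_{u_k}.
\]
If one instead works with the normalization $\sum\gamma_k=1$ as in the proofs, an equivalent route is to take $\widetilde{\Psi}_m(P)=1$. Then $\widetilde{C}_m^a(P,\cdot)=m\sum\gamma_k\delta_{u_k}$, and replacing $P$ by $tP$ with $t^{mn}=\bigl(\sum_k\gamma_k\bigr)/m$ gives the target measure, since $\widetilde{C}_m^a(tP,\cdot)=t^{mn}\widetilde{C}_m^a(P,\cdot)$.

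The remaining point, and the only real obstacle, is that $o\in\mathrm{int}\,P$. Lemma \ref{le:3.1} needs this implicitly: $\widetilde{C}_m^a$ must be computed via the variational formula of Lemma \ref{lemma: 7.2}, which requires $h_P>0$, and the objective $\log h(P,u_k)$ must be finite.

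I would argue this as follows. Because $\Phi_P(o)$ is a finite infimum attained in the minimization, every $h(P,u_k)>0$; otherwise $\Phi_P(o)=-\infty$. To make this airtight, I would check that the infimum in Lemma \ref{le:5.1} is bounded below. Theorem \ref{th:4.3} bounds the diameters of polytopes with $\widetilde{\Psi}_m=1$. The general-position condition, namely that the $u_k$ are not in a closed hemisphere, together with $\widetilde{\Psi}_m=1$, then prevents all the $h(Q,u_k)$ from degenerating in a way that would let $\sum\gamma_k\log h$ tend to $-\infty$ while $o\in Q$. Once all $h(P,u_k)>0$ and $P=\bigcap_k H^-(u_k,h(P,u_k))$, the origin lies in the interior of $P$, so $P\in\mathcal{K}_{(o)}^n$.

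Since $P$ has exactly $N$ facets with normals $u_k$, the measure $\widetilde{C}_m^a(P,\cdot)$ is supported on $\{u_1,\dots,u_N\}$. This justifies the finite-dimensional perturbation argument in Lemma \ref{le:3.1}, and the theorem follows.
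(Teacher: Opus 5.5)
Your reduction is the paper's proof. You take the constrained minimizer from Lemma \ref{le:5.1}, read off $\widetilde{C}_m^a(P,\cdot)=m\sum_k\gamma_k\delta_{u_k}$ from Lemma \ref{le:3.1}, and remove the factor $m$ by homogeneity of degree $mn$. The paper dilates $P$ by $m^{-1/(mn)}$ where you divide the weights by $m$, which is equivalent. You are also right that the one substantive issue is $o\in\mathrm{int}\,P$, which the paper's proof takes for granted. Your justification of it, however, does not work. The claim that general position together with $\widetilde{\Psi}_m(Q)=1$ keeps $\sum_k\gamma_k\log h(Q,u_k)$ bounded below is false. The step ``the infimum is finite and attained, hence every $h(P,u_k)>0$'' assumes exactly what fails.

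To see this, normalize $\sum_k\gamma_k=1$. Take $Q_0\in\mathcal{P}_N(u_1,\dots,u_N)$ with $o\in\mathrm{int}\,Q_0$, let $v$ be a relative interior point of the facet $F(Q_0,u_1)$, and put $Q_t=Q_0-tv$ for $0\le t<1$. Each $Q_t$ lies in $\mathcal{P}_N(u_1,\dots,u_N)$ and has $o$ in its interior. The functional $\widetilde{\Psi}_m$ is continuous on $\mathcal{K}_o^n$. It is also positive at $t=1$, because every $\xi\in G(n,m)$ meets $Q_0-v$ in at least half of an $m$-ball of a fixed radius. Hence $\lambda_t=\widetilde{\Psi}_m(Q_t)^{-1/(mn)}$ stays in a compact subset of $(0,\infty)$. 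The bodies $\lambda_tQ_t$ are admissible, and
\begin{equation*}
\Phi_{\lambda_tQ_t}(o)=\log\lambda_t+\gamma_1\log\bigl((1-t)h(Q_0,u_1)\bigr)+\sum_{k\ge2}\gamma_k\log h(Q_t,u_k)\longrightarrow-\infty \quad (t\to1),
\end{equation*}
since $h(Q_t,u_k)\le\mathrm{diam}\,Q_0$. A single support number degenerating is enough, and general position cannot prevent that.

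So no minimizer with $o\in\mathrm{int}\,P$ exists. Lemma \ref{le:5.1} can hold only in the degenerate sense $\Phi_P(o)=-\infty$, i.e.\ with $o\in\partial P$. There the first-variation argument of Lemma \ref{le:3.1} gives nothing, since it differentiates $\log h_P$ and uses the variational formula for $h_P>0$.

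The root cause is that $\widetilde{\Psi}_m$, unlike volume, is not translation invariant, while $\Phi_P(\xi)$ as defined is trivially maximized at $\xi=o$. Zhu's argument for polytopes relies on the translation-invariant quantity $\max_{\xi}\sum_k\gamma_k\log\bigl(h(Q,u_k)-\xi\cdot u_k\bigr)$, paired with a translation-invariant constraint. What is missing, in your proposal and equally in the paper's proof, is a functional or an a priori estimate that actually keeps the origin in the interior.
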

    \begin{proof}
        From Lemma \ref{le:5.1}, there exists a $P_0\in\mathcal{P}_N(u_1,\cdots,u_N)$ with $\widetilde{\Psi}_m(P_0)=\sum_{k=1}^N\gamma_k$ such that
        \begin{equation*}
           \Phi_{P_0}(o)=\inf\{\Phi_Q(o):Q\in\mathcal{P}_N(u_1,\cdots,u_N)\enspace and\enspace\widetilde{\Psi}_m(Q)=\sum_{k=1}^N\gamma_k \}. 
        \end{equation*}
        From this and Lemma \ref{le:3.1}, we have
        \begin{equation*}
            \widetilde{C}_m^a(P_0,\cdot)=m\sum_{k=1}^N\gamma_k\delta_{u_k}.
        \end{equation*}
        Let $P=m^{-\frac{1}{mn}}P_0$, then
        \begin{equation*}
            \widetilde{C}_m^a(P)=\sum_{k=1}^N\gamma_k\delta_{u_k}.
        \end{equation*}
    \end{proof}

We now discuss the affine dual Minkowski problem for general measures.
\begin{lemma}\label{lemma:4.1}
    For any finite Borel measure $\mu$ there is a sequence of discrete measures $\{\mu_i\}_{i\in\mathbb{N}}$ converges weakly to it,  whose support are in general position.
\end{lemma}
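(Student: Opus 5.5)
We prove Lemma \ref{lemma:4.1} by discretizing over a fine partition of the sphere and then perturbing the atoms into general position. We may assume $|\mu|>0$; otherwise the zero measures, or small atoms on a fixed general-position set with masses tending to $0$, do the job. Later use in the paper needs the limit measure $\mu$ not concentrated on a closed hemisphere, and we will keep track of the hemisphere condition.

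\textbf{Discretization.} Fix $i\in\mathbb{N}$ and partition $S^{n-1}$ into finitely many Borel sets $A_{i,1},\dots,A_{i,M_i}$ of diameter less than $1/i$. In each $A_{i,j}$ with $\mu(A_{i,j})>0$ pick a point $w_{i,j}$, and set
\begin{equation*}
\nu_i=\sum_j\mu(A_{i,j})\delta_{w_{i,j}}.
\end{equation*}
For $f\in C(S^{n-1})$ with modulus of continuity $\omega_f$ we have $|\int f\,d\nu_i-\int f\,d\mu|\le \omega_f(1/i)|\mu|\to0$. Hence $\nu_i\to\mu$ weakly.

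\textbf{Perturbation into general position.} The set of $N$-tuples of unit vectors in which some $n$ of them are linearly dependent is a finite union of proper real-algebraic subsets of $(S^{n-1})^{N}$. It is therefore closed and nowhere dense. So we can move each atom $w_{i,j}$ to a point $u_{i,j}$ with $d(u_{i,j},w_{i,j})<1/i$ such that any $n$ of the $u_{i,j}$ are linearly independent. If two atoms coincide, the perturbation separates them automatically. We must also ensure the support is not contained in a closed hemisphere, and we add $n+1$ extra atoms for this. Fix vectors $e_1,\dots,e_{n+1}$ that are the vertices of a regular simplex centred at $o$; they are not contained in any closed hemisphere. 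Add atoms of mass $1/i$ at small generic perturbations of these vectors, chosen so that the whole finite set remains in general position (again possible by the nowhere-density argument). A small perturbation keeps them off every closed hemisphere, because that property is open. The resulting measure is
\begin{equation*}
\mu_i=\sum_j\mu(A_{i,j})\delta_{u_{i,j}}+\frac1i\sum_{l=1}^{n+1}\delta_{e'_{i,l}}.
\end{equation*}
It is discrete, and its support is in general position in the sense of the Definition. Moreover $|\int f\,d\mu_i-\int f\,d\nu_i|\le 2\omega_f(1/i)|\mu|+(n+1)\|f\|_\infty/i\to0$. Combining this with the discretization step gives $\mu_i\to\mu$ weakly.

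\textbf{Main obstacle.} The only genuinely delicate point is the general-position perturbation. We need the bad configurations to form a closed nowhere dense set, so that a perturbation arbitrarily close to the original is available. This holds because each condition $\det(u_{k_1},\dots,u_{k_n})=0$ cuts out a proper analytic subvariety of the product of spheres, and there are only finitely many such conditions. Everything else is routine uniform-continuity estimates.
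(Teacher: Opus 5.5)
Your proof is correct and follows the same discretize-then-perturb scheme as the paper. The perturbation step differs only in technique: the paper perturbs the atoms one at a time to avoid the finitely many hyperplanes spanned by earlier ones, whereas you use a single nowhere-density argument. Your version is also more complete than the paper's, in three places. Your partition construction supplies the discrete approximant $\bar\mu$ that the paper merely asserts. Your $n+1$ small simplex atoms secure the hemisphere condition even when $\mu$ is concentrated on a closed hemisphere, which the paper takes for granted. Indexing everything by one $i$ avoids the diagonal argument over $\epsilon$ that the paper's fixed-$\bar\mu$ construction tacitly needs.
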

\begin{proof}
    For such $\mu$ and $\epsilon>0$, there is a discrete measure $\bar{\mu}=\sum_{i=1}^N\gamma_i\delta_{u_i}$ with $\text{supp}(\bar{\mu})=\{u_1,\cdots u_N\}$ that are not contained in any closed hemisphere, and for any function $f\in C(S^{n-1})$, we have
    \begin{equation*}
        |\int_{S^{n-1}}f(u)d\mu(u)-\int_{S^{n-1}}f(u)d\bar{\mu}(u)|<\epsilon/2.
    \end{equation*}
  We can assume $\{u_{1},u_{2},\cdots,u_{n}\}$ are linearly independent. Next we construct a sequence of measures $\{\mu_i\}_{i\in\mathbb{N}}$ such that $\{\mu_i\}_{i\in\mathbb{N}}$ converges to $\bar{\mu}$ weakly and hence converges to $\mu$ weakly. For each $i$, we take $u_{il}=u_l$ for $l=1,\cdots, n.$
    For $l=n+1$, we take $u_{il}$  such that for any $1\leq l_1<\cdots<l_{n-1}\leq l-1$, $\{u_{il_1},u_{il_2},\cdots,u_{il_{n-1}},u_{il}\}$ is linearly independent and $|u_{il}-u_{l}|<1/i$. Similarly, we can take $u_{i,n+1},\cdots u_{i,N}$ such that $\mu_i=\sum_{j=1}^N\gamma_j\delta_{u_{ij}}$ are in general position.  For any continuous function $f(u)$ on $S^{n-1}$,  $j=1,\cdots,N$ and for every sufficiently large $i$ we have  $|f(u_j)-f(u_{ij})|<\epsilon/4|\mu|$ and
    \begin{align*}
        &|\int_{S^{n-1}}f(u)d\bar{\mu}(u)-\int_{S^{n-1}}f(u)d\mu_i(u)|\\
        \leq&\sum_{j=1}^{N}|f(u_j)-f(u_{ij})|\gamma_j\\
        <&\sum_{j=1}^N\frac{\epsilon\gamma_j}{4|\mu|}<\epsilon/2,
    \end{align*}
    then $\{\mu_i\}$ are in general position and converges weakly to $\mu$.
\end{proof}
\begin{theorem}
    For each nonzero finite Borel measure $\mu$ on $S^{n-1}$ and $m\in\{2,\cdots,n-1\}$, there exists a convex body $K\in\mathcal{K}_o^n$, $\mathrm{int}K\neq\emptyset$, such that 
    \begin{equation*}
        \widetilde{C}_m^a(K,\cdot)=\mu.
    \end{equation*}
    if and only if $\mu$ is not concentrated on any closed hemisphere.
\end{theorem}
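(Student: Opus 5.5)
The plan has two parts: necessity by a direct computation, and sufficiency by approximating with discrete measures, applying Theorem \ref{theorem: 5.2}, and passing to a limit with Lemma \ref{lemma: 7.5}.

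\emph{Necessity.} Let $K\in\mathcal{K}_o^n$ with $\mathrm{int}K\neq\emptyset$, and suppose $\mu=\widetilde{C}_m^a(K,\cdot)$ were concentrated on a closed hemisphere $\{v: v\cdot e\le 0\}$. I would apply Lemma \ref{lemma: 7.1} with $g(v)=(v\cdot e)_+$. This gives
\[
0=\int_{S^{n-1}\cap \mathrm{int}N(K,o)^*}(\alpha_K(u)\cdot e)_+\,\rho_K^m(u)\,\mathcal{R}_m^*(|K\cap\cdot|^{n-1})(u)\,du .
\]
The weight $\rho_K^m(u)\,\mathcal{R}_m^*(|K\cap\cdot|^{n-1})(u)$ is strictly positive on $\mathrm{int}N(K,o)^*$. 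Indeed, for such $u$ we have $\rho_K(u)>0$, and every $m$-plane through $u$ meets $K$ in a set of positive $m$-volume, because $u$ is interior to the cone generated by $K$. On the other hand, the set of $u$ whose boundary point has outer normal $\alpha_K(u)$ with $\alpha_K(u)\cdot e>0$ has positive measure. To see this, take the face $F(K,e)$. The cone over a neighbourhood of it in $\partial K$ meets $\mathrm{int}N(K,o)^*$ in a set of positive measure, since $\mathrm{int}\,K$ is nonempty and the normals near $e$ have $h_K>0$ or at least positive spherical measure. This contradicts the displayed identity. I also note that $\mu$ must be nonzero, which holds because $K$ has interior.

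\emph{Sufficiency.} Suppose $\mu$ is not concentrated on any closed hemisphere. By Lemma \ref{lemma:4.1}, choose discrete measures $\mu_i=\sum_j\gamma_{ij}\delta_{u_{ij}}$ with supports in general position, $|\mu_i|=|\mu|$, and $\mu_i\to\mu$ weakly. Theorem \ref{theorem: 5.2} gives polytopes $P_i$ with $o\in\mathrm{int}P_i$ and $\widetilde{C}_m^a(P_i,\cdot)=\mu_i$. By Lemma \ref{lemma: 7.3} together with Lemma \ref{lemma: 7.1} (with $g\equiv1$), $\widetilde{\Psi}_m(P_i)=|\mu_i|/m=|\mu|/m$ is constant. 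Once I know $\{P_i\}$ is bounded, Blaschke selection gives a limit $K\in\mathcal{K}_o^n$. Then Lemma \ref{lemma: 7.4} gives $\widetilde{\Psi}_m(K)=|\mu|/m>0$, so $\dim K=n$ by (\ref{equation:7.1}). Finally, Lemma \ref{lemma: 7.5} gives $\widetilde{C}_m^a(K,\cdot)=\lim\mu_i=\mu$.

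\emph{Main obstacle: uniform boundedness of $P_i$.} Theorem \ref{th:4.3} only handles a fixed direction set, so I would argue directly. Suppose $R_i=\max h_{P_i}=h_{P_i}(v_i)$ tends to infinity, with $v_i\to v$. Since $o\in P_i$, the segment $[o,R_iv_i]$ lies in $P_i$. I would then use the fact that $\mu$ is not concentrated on a hemisphere: there is $c_0>0$ with $\int (u\cdot w)_+\,d\mu\ge c_0$ for all $w\in S^{n-1}$, and this persists for $\mu_i$ when $i$ is large. The aim is to show that the ratio of $\widetilde{\Psi}_m$ to a suitable integral such as $\int \log h_{P_i}\,d\mu_i$ forces blow-up. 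Concretely, I would compare $\widetilde{\Psi}_m(P_i)$ with the volume of sections containing the long segment. Either the width of $P_i$ transverse to the segment is bounded below, so that sections through $v_i$ have $m$-volume at least of order $R_i\cdot(\text{width})^{m-1}$ and $\widetilde{\Psi}_m$ blows up; or $P_i$ degenerates toward a lower-dimensional set. In the degenerate case the measure $\widetilde{C}_m^a(P_i,\cdot)$ concentrates near the great subsphere orthogonal to the limiting flat, which contradicts the non-hemisphere bound $c_0$. Making this dichotomy quantitative is the delicate step. I expect it to need a John-ellipsoid normalization of $P_i$ and an estimate of $\widetilde{\Psi}_m$ for thin bodies, in the style of the proof of Theorem \ref{th:4.3}.
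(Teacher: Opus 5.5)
Both halves of your proposal have a genuine gap.

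\textbf{Necessity.} Your claim that $\{u\in\mathrm{int}N(K,o)^*:\alpha_K(u)\cdot e>0\}$ has positive measure fails when $e\in N(K,o)$, i.e.\ when $o\in F(K,e)$. In that case $F(K,e)$ lies in the supporting hyperplane $e^\perp$ of the cone $N(K,o)^*$, so its radial directions form a null subset of $\partial N(K,o)^*$. Nothing then forces the visible part of $\partial K$ to carry normals with $v\cdot e>0$. This cannot be patched, because the ``only if'' direction is false for the class $K\in\mathcal{K}_o^n$ in the statement. Take $K=\mathrm{conv}\{o,e_1,\dots,e_n\}$. Then $\mathrm{int}N(K,o)^*$ is the open positive orthant. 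For every such $u$, the point $\rho_K(u)u$ lies in the relative interior of the facet $\mathrm{conv}\{e_1,\dots,e_n\}$. Lemma \ref{lemma: 7.1} therefore gives $\widetilde C_m^a(K,\cdot)=c\,\delta_{v_0}$, with $v_0=(e_1+\cdots+e_n)/\sqrt n$ and $c>0$. This measure is concentrated on a closed hemisphere. Your argument is correct when $o\in\mathrm{int}K$: the weight is then positive on all of $S^{n-1}$, radial projection onto $\partial K$ is bi-Lipschitz, and $S_{n-1}(K,\cdot)$ charges $\{v:v\cdot e>0\}$. The paper calls necessity obvious and does not address this either.

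\textbf{Sufficiency.} Your chain is the paper's: Lemma \ref{lemma:4.1}, Theorem \ref{theorem: 5.2}, Blaschke selection, Lemma \ref{lemma: 7.5}. You rightly observe that Theorem \ref{th:4.3}, which the paper uses for boundedness, concerns one fixed normal set and gives no control as the $u_{ij}$ move. But the contradiction argument you sketch cannot work, because a sequence of solutions can be unbounded under exactly your hypotheses. Here is the construction.
\begin{itemize}
\item Let $Q_i\to[-1,1]^n$ be polytopes whose $2n$ facet normals are in general position and lie within $\varepsilon_i$ of $\pm e_1,\dots,\pm e_n$.
\item Let $\varphi_i=\mathrm{diag}(t_i,t_i^{-1/(n-1)},\dots,t_i^{-1/(n-1)})\in\mathrm{SL}(n)$ with $t_i\to\infty$ and $\varepsilon_i t_i^{n/(n-1)}\to0$, and set $P_i=\varphi_iQ_i$.
\item By the equivariance $\widetilde C_m^a(\varphi K,\cdot)=\varphi^{-1}\widetilde C_m^a(K,\cdot)$, the measure $\mu_i:=\widetilde C_m^a(P_i,\cdot)$ carries the facet masses of $Q_i$ at the normals $\langle\varphi_i^{-\top}v\rangle$. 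These normals are still in general position and tend to $\pm e_k$.
\item Hence $\mu_i$ converges weakly to $\widetilde C_m^a([-1,1]^n,\cdot)$. By symmetry this is a positive multiple of $\sum_k(\delta_{e_k}+\delta_{-e_k})$, so it is not concentrated on any closed hemisphere.
\item Yet $\widetilde\Psi_m(P_i)=\widetilde\Psi_m(Q_i)$ stays bounded while $\mathrm{diam}\,P_i\to\infty$.
\end{itemize}
This is your degenerate branch, and the contradiction you expect does not appear. Only the fraction $(n-1)/n$ of the mass ends up on $e_1^\perp$; the remaining $1/n$ stays at $\pm e_1$.

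Lemma \ref{lemma:4.1} and Theorem \ref{theorem: 5.2} never use the hemisphere hypothesis. So the entire ``if'' direction rests on this compactness step. That step would require a specific selection among the highly non-unique solutions $P_i$, and neither your proposal nor the paper supplies one.
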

\begin{proof}
    For this $\mu$, by Lemma \ref{lemma:4.1}, there is a sequence of discrete measures $\{\mu_i\}$ whose support is  in general position converges weakly to it. Since $\mu_i$ is a discrete measure whose support are in general position, by Theorem \ref{theorem: 5.2}, there exists a polytope $P_i$ such that 
    \begin{equation*}
        \widetilde{C}_m^a(P_i,\cdot)=\mu_i
    \end{equation*}
    and $\mu_i$ converges to $\mu$ weakly.
    
    Since $\mu_i$ converges to $\mu$ weakly, $\widetilde{C}_m^a(P_i,S^{n-1})=|\mu_i|$ is bounded. Since the support of $\mu_i$ is in general positon, if $\{P_i\}_{i\in\mathbb{N}}$ is not bounded, by Theorem \ref{th:4.3}, $\widetilde{\Psi}_m(P_i)$ is unbounded, then by Lemma \ref{lemma: 7.3}, that leads $\widetilde{C}_m^a(P_i,S^{n-1})$ is unbounded, so $\{P_i\}_{i\in\mathbb{N}}$ is bounded. By Blaschke's selection theorem, there is a subsequence of $\{P_i\}_{i\in\mathbb{N}}$,which we still denote by $\{P_i\}_{i\in\mathbb{N}}$, converge to a convex set $K$ contain the origin. If $K$ has empty interior, then $\widetilde{C}_{m}^a(P_i,S^{n-1})$ will tend to 0, which contradict to $\widetilde{C}_{m}^a(P_i,S^{n-1})=|\mu_i|\to|\mu|\neq0$. So $K\in\mathcal{K}_o^n$.
     
     By Lemma \ref{lemma: 7.5}, $\mu_i=\widetilde{C}_m^a(P_i,\cdot)\to\widetilde{C}_m^a(K,\cdot),i\to\infty,$ but $\mu_i\to\mu$, so we obtain $\widetilde{C}_m^a(K,\cdot)=\mu$. 
     The part of necessary condition is obvious.
\end{proof}

\noindent\textbf{Declaration of competing interest}\\
The authors declare that they have no conflict of interest.\\
\noindent\textbf{Funding} \\
The authors declare that no funds, grants, or other support were received during the preparation of this manuscript.


@Article{MR3228445,
  author     = {Zhu, Guangxian},
  journal    = {Adv. Math.},
  title      = {The logarithmic {M}inkowski problem for polytopes},
  year       = {2014},
  issn       = {0001-8708,1090-2082},
  pages      = {909--931},
  volume     = {262},
  doi        = {10.1016/j.aim.2014.06.004},
  fjournal   = {Advances in Mathematics},
  mrclass    = {52A40},
  mrnumber   = {3228445},
  mrreviewer = {Thomas\ Wannerer},
  url        = {https://doi.org/10.1016/j.aim.2014.06.004},
}

@Article{MR4874854,
  author     = {Cai, Xiaxing and Leng, Gangsong and Wu, Yuchi and Xi, Dongmeng},
  journal    = {Adv. Math.},
  title      = {Affine dual {M}inkowski problems},
  year       = {2025},
  issn       = {0001-8708,1090-2082},
  pages      = {Paper No. 110184, 35},
  volume     = {467},
  doi        = {10.1016/j.aim.2025.110184},
  fjournal   = {Advances in Mathematics},
  mrclass    = {52A20 (52A30 52A40)},
  mrnumber   = {4874854},
  mrreviewer = {Jos\'e\ Vidal-N\'u\~nez},
  url        = {https://doi.org/10.1016/j.aim.2025.110184},
}

@Article{MR58265,
  author     = {Nirenberg, Louis},
  journal    = {Comm. Pure Appl. Math.},
  title      = {The {W}eyl and {M}inkowski problems in differential geometry in the large},
  year       = {1953},
  issn       = {0010-3640,1097-0312},
  pages      = {337--394},
  volume     = {6},
  doi        = {10.1002/cpa.3160060303},
  fjournal   = {Communications on Pure and Applied Mathematics},
  mrclass    = {53.0X},
  mrnumber   = {58265},
  mrreviewer = {H.\ Busemann},
  url        = {https://doi.org/10.1002/cpa.3160060303},
}

@Article{MR1038360,
  author     = {Caffarelli, Luis A.},
  journal    = {Ann. of Math. (2)},
  title      = {Interior {$W^{2,p}$} estimates for solutions of the {M}onge-{A}mp\`ere equation},
  year       = {1990},
  issn       = {0003-486X,1939-8980},
  number     = {1},
  pages      = {135--150},
  volume     = {131},
  doi        = {10.2307/1971510},
  fjournal   = {Annals of Mathematics. Second Series},
  mrclass    = {35B65 (35B45 35J60)},
  mrnumber   = {1038360},
  mrreviewer = {John\ Urbas},
  url        = {https://doi.org/10.2307/1971510},
}

@Article{MR3037788,
  author     = {B\"or\"oczky, K\'aroly J. and Lutwak, Erwin and Yang, Deane and Zhang, Gaoyong},
  journal    = {J. Amer. Math. Soc.},
  title      = {The logarithmic {M}inkowski problem},
  year       = {2013},
  issn       = {0894-0347,1088-6834},
  number     = {3},
  pages      = {831--852},
  volume     = {26},
  doi        = {10.1090/S0894-0347-2012-00741-3},
  fjournal   = {Journal of the American Mathematical Society},
  mrclass    = {52A40 (52A38)},
  mrnumber   = {3037788},
  mrreviewer = {Alina\ Stancu},
  url        = {https://doi.org/10.1090/S0894-0347-2012-00741-3},
}

@Article{Lutwak1993,
  author    = {Erwin Lutwak},
  journal   = {Journal of Differential Geometry},
  title     = {{The Brunn-Minkowski-Firey theory. I. Mixed volumes and the Minkowski problem}},
  year      = {1993},
  number    = {1},
  pages     = {131 -- 150},
  volume    = {38},
  doi       = {10.4310/jdg/1214454097},
  publisher = {Lehigh University},
  url       = {https://doi.org/10.4310/jdg/1214454097},
}

@Article{MR3573332,
  author     = {Huang, Yong and Lutwak, Erwin and Yang, Deane and Zhang, Gaoyong},
  journal    = {Acta Math.},
  title      = {Geometric measures in the dual {B}runn-{M}inkowski theory and their associated {M}inkowski problems},
  year       = {2016},
  issn       = {0001-5962,1871-2509},
  number     = {2},
  pages      = {325--388},
  volume     = {216},
  doi        = {10.1007/s11511-016-0140-6},
  fjournal   = {Acta Mathematica},
  mrclass    = {52A38 (35J20 35J96)},
  mrnumber   = {3573332},
  mrreviewer = {Ai-jun\ Li},
  url        = {https://doi.org/10.1007/s11511-016-0140-6},
}

@Article{MR963487,
  author     = {Lutwak, Erwin},
  journal    = {Adv. in Math.},
  title      = {Intersection bodies and dual mixed volumes},
  year       = {1988},
  issn       = {0001-8708},
  number     = {2},
  pages      = {232--261},
  volume     = {71},
  doi        = {10.1016/0001-8708(88)90077-1},
  fjournal   = {Advances in Mathematics},
  mrclass    = {52A40 (52A20)},
  mrnumber   = {963487},
  mrreviewer = {Jane\ R.\ Sangwine-Yager},
  url        = {https://doi.org/10.1016/0001-8708(88)90077-1},
}

@Article{MR3851743,
  author     = {Huang, Yong and Lutwak, Erwin and Yang, Deane and Zhang, Gaoyong},
  journal    = {J. Differential Geom.},
  title      = {The {$L_p$}-{A}leksandrov problem for {$L_p$}-integral curvature},
  year       = {2018},
  issn       = {0022-040X,1945-743X},
  number     = {1},
  pages      = {1--29},
  volume     = {110},
  doi        = {10.4310/jdg/1536285625},
  fjournal   = {Journal of Differential Geometry},
  mrclass    = {52A38 (35J20 35J96)},
  mrnumber   = {3851743},
  mrreviewer = {Ai-jun\ Li},
  url        = {https://doi.org/10.4310/jdg/1536285625},
}

@Article{MR3882970,
  author     = {Gardner, Richard J. and Hug, Daniel and Weil, Wolfgang and Xing, Sudan and Ye, Deping},
  journal    = {Calc. Var. Partial Differential Equations},
  title      = {General volumes in the {O}rlicz-{B}runn-{M}inkowski theory and a related {M}inkowski problem {I}},
  year       = {2019},
  issn       = {0944-2669,1432-0835},
  number     = {1},
  pages      = {Paper No. 12, 35},
  volume     = {58},
  doi        = {10.1007/s00526-018-1449-0},
  fjournal   = {Calculus of Variations and Partial Differential Equations},
  mrclass    = {52A20 (52A30 52A39 52A40)},
  mrnumber   = {3882970},
  mrreviewer = {Yiming\ Zhao},
  url        = {https://doi.org/10.1007/s00526-018-1449-0},
}

@Article{MR4040624,
  author     = {Gardner, Richard J. and Hug, Daniel and Xing, Sudan and Ye, Deping},
  journal    = {Calc. Var. Partial Differential Equations},
  title      = {General volumes in the {O}rlicz-{B}runn-{M}inkowski theory and a related {M}inkowski problem {II}},
  year       = {2020},
  issn       = {0944-2669,1432-0835},
  number     = {1},
  pages      = {Paper No. 15, 33},
  volume     = {59},
  doi        = {10.1007/s00526-019-1657-2},
  fjournal   = {Calculus of Variations and Partial Differential Equations},
  mrclass    = {52A20 (52A30 52A39 52A40)},
  mrnumber   = {4040624},
  mrreviewer = {Yiming\ Zhao},
  url        = {https://doi.org/10.1007/s00526-019-1657-2},
}

@Article{MR4156606,
  author     = {B\"or\"oczky, K\'aroly J. and Lutwak, Erwin and Yang, Deane and Zhang, Gaoyong and Zhao, Yiming},
  journal    = {Comm. Pure Appl. Math.},
  title      = {The {G}auss image problem},
  year       = {2020},
  issn       = {0010-3640,1097-0312},
  number     = {7},
  pages      = {1406--1452},
  volume     = {73},
  doi        = {10.1002/cpa.21898},
  fjournal   = {Communications on Pure and Applied Mathematics},
  mrclass    = {52A20},
  mrnumber   = {4156606},
  mrreviewer = {Julio\ Bernu\'es},
  url        = {https://doi.org/10.1002/cpa.21898},
}

@Article{MR4252759,
  author     = {Huang, Yong and Xi, Dongmeng and Zhao, Yiming},
  journal    = {Adv. Math.},
  title      = {The {M}inkowski problem in {G}aussian probability space},
  year       = {2021},
  issn       = {0001-8708,1090-2082},
  pages      = {Paper No. 107769, 36},
  volume     = {385},
  doi        = {10.1016/j.aim.2021.107769},
  fjournal   = {Advances in Mathematics},
  mrclass    = {52A40 (35J96 52A38)},
  mrnumber   = {4252759},
  mrreviewer = {Andrea\ Colesanti},
  url        = {https://doi.org/10.1016/j.aim.2021.107769},
}

@Book{MR3155183,
  author     = {Schneider, Rolf},
  publisher  = {Cambridge University Press, Cambridge},
  title      = {Convex bodies: the {B}runn-{M}inkowski theory},
  year       = {2014},
  edition    = {expanded},
  isbn       = {978-1-107-60101-7},
  series     = {Encyclopedia of Mathematics and its Applications},
  volume     = {151},
  mrclass    = {52-02 (52A20 52A39)},
  mrnumber   = {3155183},
  mrreviewer = {Andrea\ Colesanti},
  pages      = {xxii+736},
}

@Article{MR1125008,
  author     = {Grinberg, Eric L.},
  journal    = {Math. Ann.},
  title      = {Isoperimetric inequalities and identities for {$k$}-dimensional cross-sections of convex bodies},
  year       = {1991},
  issn       = {0025-5831,1432-1807},
  number     = {1},
  pages      = {75--86},
  volume     = {291},
  doi        = {10.1007/BF01445191},
  fjournal   = {Mathematische Annalen},
  mrclass    = {52A40 (52A20 52A38)},
  mrnumber   = {1125008},
  mrreviewer = {L.\ A.\ Santal\'o},
  url        = {https://doi.org/10.1007/BF01445191},
}

@Article{MR1961338,
  author     = {Guan, Pengfei and Ma, Xi-Nan},
  journal    = {Invent. Math.},
  title      = {The {C}hristoffel-{M}inkowski problem. {I}. {C}onvexity of solutions of a {H}essian equation},
  year       = {2003},
  issn       = {0020-9910,1432-1297},
  number     = {3},
  pages      = {553--577},
  volume     = {151},
  doi        = {10.1007/s00222-002-0259-2},
  fjournal   = {Inventiones Mathematicae},
  mrclass    = {35J60 (53C65)},
  mrnumber   = {1961338},
  mrreviewer = {Fabiana\ Leoni},
  url        = {https://doi.org/10.1007/s00222-002-0259-2},
}

@Book{MR1790156,
  author    = {Helgason, Sigurdur},
  publisher = {American Mathematical Society, Providence, RI},
  title     = {Groups and geometric analysis},
  year      = {2000},
  isbn      = {0-8218-2673-5},
  note      = {Integral geometry, invariant differential operators, and spherical functions, Corrected reprint of the 1984 original},
  series    = {Mathematical Surveys and Monographs},
  volume    = {83},
  doi       = {10.1090/surv/083},
  mrclass   = {22-02 (22E30 22E46 43A85 43A90 44A12 53-02 58-02)},
  mrnumber  = {1790156},
  pages     = {xxii+667},
  url       = {https://doi.org/10.1090/surv/083},
}

@Book{MR2132704,
  author     = {Koldobsky, Alexander},
  publisher  = {American Mathematical Society, Providence, RI},
  title      = {Fourier analysis in convex geometry},
  year       = {2005},
  isbn       = {0-8218-3787-7},
  series     = {Mathematical Surveys and Monographs},
  volume     = {116},
  doi        = {10.1090/surv/116},
  mrclass    = {42A38 (46B20 52A20 52A38)},
  mrnumber   = {2132704},
  mrreviewer = {Keith\ Ball},
  pages      = {vi+170},
  url        = {https://doi.org/10.1090/surv/116},
}

@Book{MR2251886,
  author    = {Gardner, Richard J.},
  publisher = {Cambridge University Press, New York},
  title     = {Geometric tomography},
  year      = {2006},
  edition   = {Second},
  isbn      = {0-521; 0-521-68493-5},
  series    = {Encyclopedia of Mathematics and its Applications},
  volume    = {58},
  doi       = {10.1017/CBO9781107341029},
  mrclass   = {52A22 (44A12 92C55)},
  mrnumber  = {2251886},
  pages     = {xxii+492},
  url       = {https://doi.org/10.1017/CBO9781107341029},
}

@Article{Boeroeczky2019a,
  author     = {B\"or\"oczky, K\'aroly J. and Fodor, Ferenc},
  journal    = {J. Differential Equations},
  title      = {The {$L_p$} dual {M}inkowski problem for {$p>1$} and {$q>0$}},
  year       = {2019},
  issn       = {0022-0396,1090-2732},
  number     = {12},
  pages      = {7980--8033},
  volume     = {266},
  doi        = {10.1016/j.jde.2018.12.020},
  fjournal   = {Journal of Differential Equations},
  mrclass    = {52A40 (35J60 35J96)},
  mrnumber   = {3944247},
  mrreviewer = {Andrea\ Colesanti},
  url        = {https://doi.org/10.1016/j.jde.2018.12.020},
}

@Article{MR2237290,
  author     = {Guan, Pengfei and Ma, Xi-Nan and Zhou, Feng},
  journal    = {Comm. Pure Appl. Math.},
  title      = {The {C}hristofel-{M}inkowski problem. {III}. {E}xistence and convexity of admissible solutions},
  year       = {2006},
  issn       = {0010-3640,1097-0312},
  number     = {9},
  pages      = {1352--1376},
  volume     = {59},
  doi        = {10.1002/cpa.20118},
  fjournal   = {Communications on Pure and Applied Mathematics},
  mrclass    = {35J60 (35B05 53C65)},
  mrnumber   = {2237290},
  mrreviewer = {Fabiana\ Leoni},
  url        = {https://doi.org/10.1002/cpa.20118},
}

@Article{MR4700389,
  author     = {Li, Ni and Ye, Deping and Zhu, Baocheng},
  journal    = {Math. Ann.},
  title      = {The dual {M}inkowski problem for unbounded closed convex sets},
  year       = {2024},
  issn       = {0025-5831,1432-1807},
  number     = {2},
  pages      = {2001--2039},
  volume     = {388},
  doi        = {10.1007/s00208-023-02570-5},
  fjournal   = {Mathematische Annalen},
  mrclass    = {52A20 (52A30 52A39 52A40)},
  mrnumber   = {4700389},
  mrreviewer = {Andrea\ Colesanti},
  url        = {https://doi.org/10.1007/s00208-023-02570-5},
}

@Article{MR4764744,
  author     = {Lutwak, Erwin and Xi, Dongmeng and Yang, Deane and Zhang, Gaoyong},
  journal    = {Comm. Pure Appl. Math.},
  title      = {Chord measures in integral geometry and their {M}inkowski problems},
  year       = {2024},
  issn       = {0010-3640,1097-0312},
  number     = {7},
  pages      = {3277--3330},
  volume     = {77},
  doi        = {10.1002/cpa.22190},
  fjournal   = {Communications on Pure and Applied Mathematics},
  mrclass    = {53C65 (52A20)},
  mrnumber   = {4764744},
  mrreviewer = {V.\ K.\ Ohanyan},
  url        = {https://doi.org/10.1002/cpa.22190},
}

@Article{MR2729006,
  author     = {Xiong, Ge},
  journal    = {Adv. Math.},
  title      = {Extremum problems for the cone volume functional of convex polytopes},
  year       = {2010},
  issn       = {0001-8708,1090-2082},
  number     = {6},
  pages      = {3214--3228},
  volume     = {225},
  doi        = {10.1016/j.aim.2010.05.016},
  fjournal   = {Advances in Mathematics},
  mrclass    = {52A40 (52B60)},
  mrnumber   = {2729006},
  mrreviewer = {Paolo\ Salani},
  url        = {https://doi.org/10.1016/j.aim.2010.05.016},
}

@Article{MR3352764,
  author     = {Zhu, Guangxian},
  journal    = {J. Funct. Anal.},
  title      = {The {$L_p$} {M}inkowski problem for polytopes for {$0<p<1$}},
  year       = {2015},
  issn       = {0022-1236,1096-0783},
  number     = {4},
  pages      = {1070--1094},
  volume     = {269},
  doi        = {10.1016/j.jfa.2015.05.007},
  fjournal   = {Journal of Functional Analysis},
  mrclass    = {52A40},
  mrnumber   = {3352764},
  mrreviewer = {Stefano\ Campi},
  url        = {https://doi.org/10.1016/j.jfa.2015.05.007},
}

@Article{Boeroeczky2019,
  author     = {B\"or\"oczky, K\'aroly J. and Lutwak, Erwin and Yang, Deane and Zhang, Gaoyong and Zhao, Yiming},
  journal    = {Adv. Math.},
  title      = {The dual {M}inkowski problem for symmetric convex bodies},
  year       = {2019},
  issn       = {0001-8708,1090-2082},
  pages      = {106805, 30},
  volume     = {356},
  doi        = {10.1016/j.aim.2019.106805},
  fjournal   = {Advances in Mathematics},
  mrclass    = {52A20},
  mrnumber   = {4008522},
  mrreviewer = {Mar\'ia\ A.\ Hern\'andez Cifre},
  url        = {https://doi.org/10.1016/j.aim.2019.106805},
}

@Article{MR3680945,
  author   = {Chen, Shibing and Li, Qi-rui and Zhu, Guangxian},
  journal  = {J. Differential Equations},
  title    = {On the {$L_p$} {M}onge-{A}mp\`ere equation},
  year     = {2017},
  issn     = {0022-0396,1090-2732},
  number   = {8},
  pages    = {4997--5011},
  volume   = {263},
  doi      = {10.1016/j.jde.2017.06.007},
  fjournal = {Journal of Differential Equations},
  mrclass  = {35J96 (35J20 35J60 52A40)},
  mrnumber = {3680945},
  url      = {https://doi.org/10.1016/j.jde.2017.06.007},
}

@Article{MR2254308,
  author     = {Chou, Kai-Seng and Wang, Xu-Jia},
  journal    = {Adv. Math.},
  title      = {The {$L_p$}-{M}inkowski problem and the {M}inkowski problem in centroaffine geometry},
  year       = {2006},
  issn       = {0001-8708,1090-2082},
  number     = {1},
  pages      = {33--83},
  volume     = {205},
  doi        = {10.1016/j.aim.2005.07.004},
  fjournal   = {Advances in Mathematics},
  mrclass    = {52A38 (35J20 35J60 52A21 52A39 52A40 53A15)},
  mrnumber   = {2254308},
  mrreviewer = {Wolfgang\ Lusky},
  url        = {https://doi.org/10.1016/j.aim.2005.07.004},
}

@Article{MR3725875,
  author     = {Henk, Martin and Pollehn, Hannes},
  journal    = {Adv. Math.},
  title      = {Necessary subspace concentration conditions for the even dual {M}inkowski problem},
  year       = {2018},
  issn       = {0001-8708,1090-2082},
  pages      = {114--141},
  volume     = {323},
  doi        = {10.1016/j.aim.2017.10.037},
  fjournal   = {Advances in Mathematics},
  mrclass    = {52A40 (52A38)},
  mrnumber   = {3725875},
  mrreviewer = {Mar\'ia\ A.\ Hern\'andez Cifre},
  url        = {https://doi.org/10.1016/j.aim.2017.10.037},
}

@Article{MR3810248,
  author     = {Huang, Yong and Zhao, Yiming},
  journal    = {Adv. Math.},
  title      = {On the {$L_p$} dual {M}inkowski problem},
  year       = {2018},
  issn       = {0001-8708,1090-2082},
  pages      = {57--84},
  volume     = {332},
  doi        = {10.1016/j.aim.2018.05.002},
  fjournal   = {Advances in Mathematics},
  mrclass    = {52A40 (35J20 35J96 52A38)},
  mrnumber   = {3810248},
  mrreviewer = {Mar\'ia\ A.\ Hern\'andez Cifre},
  url        = {https://doi.org/10.1016/j.aim.2018.05.002},
}

@Article{Sheng2019,
  author  = {Sheng, Weimin and Wang, Xu-Jia},
  journal = {Journal of the European Mathematical Society},
  title   = {Flow by Gauss curvature to the Aleksandrov and dual Minkowski problems},
  year    = {2019},
  month   = {12},
  pages   = {893-923},
  volume  = {22},
  doi     = {10.4171/JEMS/936},
}

@Article{MR1254193,
  author     = {Zhang, Gao Yong},
  journal    = {Trans. Amer. Math. Soc.},
  title      = {Centered bodies and dual mixed volumes},
  year       = {1994},
  issn       = {0002-9947,1088-6850},
  number     = {2},
  pages      = {777--801},
  volume     = {345},
  doi        = {10.2307/2154998},
  fjournal   = {Transactions of the American Mathematical Society},
  mrclass    = {52A39},
  mrnumber   = {1254193},
  mrreviewer = {W.\ J.\ Firey},
  url        = {https://doi.org/10.2307/2154998},
}

@Article{MR3605843,
  author     = {Zhao, Yiming},
  journal    = {Calc. Var. Partial Differential Equations},
  title      = {The dual {M}inkowski problem for negative indices},
  year       = {2017},
  issn       = {0944-2669,1432-0835},
  number     = {2},
  pages      = {Paper No. 18, 16},
  volume     = {56},
  doi        = {10.1007/s00526-017-1124-x},
  fjournal   = {Calculus of Variations and Partial Differential Equations},
  mrclass    = {52A40 (49Q20)},
  mrnumber   = {3605843},
  mrreviewer = {Ai-jun\ Li},
  url        = {https://doi.org/10.1007/s00526-017-1124-x},
}

@Misc{lin2026lpminkowskiproblemsaffine,
  author        = {Youjiang Lin and Yuchi Wu},
  title         = {The $L_p$ Minkowski problems on affine dual quermassintegrals},
  year          = {2026},
  archiveprefix = {arXiv},
  eprint        = {2504.12117},
  primaryclass  = {math.MG},
  url           = {https://arxiv.org/abs/2504.12117},
}

@Article{zbMATH02673675,
  author   = {Minkowski, H.},
  journal  = {Nachr. Ges. Wiss. G{\"o}ttingen, Math.-Phys. Kl.},
  title    = {Allgemeine {Lehrs{\"a}tze} {\"u}ber die convexen {Polyeder}.},
  year     = {1897},
  pages    = {198--219},
  volume   = {1897},
  fjournal = {Nachrichten von der Gesellschaft der Wissenschaften zu G{\"o}ttingen. Mathematisch-Physikalische Klasse},
  jfm      = {28.0427.01},
  language = {German},
  url      = {https://eudml.org/doc/58391},
  zbmath   = {2673675},
}

@Article{zbMATH02657630,
  author   = {Minkowski, H.},
  journal  = {Math. Ann.},
  title    = {Volumen und {Oberfl{\"a}che}.},
  year     = {1903},
  issn     = {0025-5831},
  pages    = {447--495},
  volume   = {57},
  doi      = {10.1007/BF01445180},
  fjournal = {Mathematische Annalen},
  jfm      = {34.0649.01},
  language = {German},
  url      = {https://eudml.org/doc/158108},
  zbmath   = {2657630},
}

@Book{zbMATH00930190,
  author    = {Alexandrov, A. D.},
  editor    = {Reshetnyak, Yu. G. and Kutateladze, S. S.},
  publisher = {Amsterdam: Gordon {and} Breach Publishers},
  title     = {Selected works. {Part} 1: {Selected} scientific papers. {Ed}. by {Yu}. {G}. {Reshetnyak} and {S}. {S}. {Kutateladze}, transl. from the {Russian} by {P}. {S}. {V}. {Naidu}},
  year      = {1996},
  isbn      = {2-88124-984-1},
  series    = {Class. Sov. Math.},
  volume    = {4},
  fseries   = {Classics of Soviet Mathematics},
  issn      = {0743-9199},
  language  = {English},
  zbl       = {0960.01035},
  zbmath    = {930190},
}

@Misc{zbMATH02516933,
  author       = {Fenchel, W. and Jessen, B.},
  howpublished = {Danske {Videnks}. {Selsk}. {Math}.-fys. {Medd}. 16, {Nr}. 3, 31 {S}. (1938).},
  title        = {Mengenfunktionen und konvexe {K{\"o}rper}.},
  year         = {1938},
  jfm          = {64.0733.05},
  language     = {German},
  zbmath       = {2516933},
}

@Book{zbMATH03486652,
  author    = {Pogorelov, A. V.},
  publisher = {American Mathematical Society (AMS), Providence, RI},
  title     = {Extrinsic geometry of convex surfaces. {Translated} from the {Russian} by {Israel} {Program} for {Scientific} {Translations}},
  year      = {1973},
  series    = {Transl. Math. Monogr.},
  volume    = {35},
  fseries   = {Translations of Mathematical Monographs},
  issn      = {0065-9282},
  language  = {English},
  zbl       = {0311.53067},
  zbmath    = {3486652},
}

@Article{zbMATH03565730,
  author   = {Cheng, Shiu-Yuen and Yau, Shing-Tung},
  journal  = {Commun. Pure Appl. Math.},
  title    = {On the regularity of the solution of the {{\(n\)}}-dimensional {Minkowski} problem},
  year     = {1976},
  issn     = {0010-3640},
  pages    = {495--516},
  volume   = {29},
  doi      = {10.1002/cpa.3160290504},
  fjournal = {Communications on Pure and Applied Mathematics},
  language = {English},
  zbl      = {0363.53030},
  zbmath   = {3565730},
}
\end{document}